\documentclass{tran-l}

\usepackage[left=2.8cm,right=2.8cm,top=3cm,bottom=3cm]{geometry}
\usepackage{amssymb,amsmath,amsfonts,xcolor,graphicx}
\usepackage{amsthm,float,mathrsfs}
\usepackage{setspace,cancel}
\usepackage[colorlinks=true, linkcolor=black, citecolor=black, urlcolor=black]{hyperref}
\usepackage{enumitem}

\newcommand{\R}{\mathbb{R}}
\newcommand{\Q}{\mathbb{Q}}
\newcommand{\Z}{\mathbb{Z}}
\newcommand{\N}{\mathbb{N}}

\newcommand{\V}{\mathcal{V}}

\newcommand{\cL}{\mathcal{L}}
\newcommand{\B}{\mathcal{B}}
\newcommand{\x}{\mathbf{x}}
\newcommand{\cH}{\mathcal{H}}
\newcommand{\cA}{\mathcal{A}}

\newcommand{\D}{\mathcal{D}}
\newcommand{\cC}{\mathcal{C}}

\newcommand{\cS}{\mathcal{S}}

\newcommand{\cR}{\mathcal{R}}
\newcommand{\p}{\mathbf{p}}

\newcommand{\y}{\mathbf{y}}

\newtheorem{theorem}{Theorem}[section]
\newtheorem{lemma}[theorem]{Lemma}
\newtheorem{proposition}[theorem]{Proposition}

\theoremstyle{definition}

\theoremstyle{remark}
\newtheorem{remark}[theorem]{Remark}

\numberwithin{equation}{section}

\begin{document}

\title{ON THE HAUSDORFF DIMENSION OF WEIGHTED BADLY APPROXIMABLE VECTORS}


\author{Yi Lou}
\address{DEPARTMENT OF MATHEMATICS, NATIONAL UNIVERSITY OF SINGAPORE, SINGAPORE}
\curraddr{}
\email{lou\_yi@u.nus.edu}
\thanks{}

\subjclass[2020]{11J13}

\date{}

\dedicatory{}

\begin{abstract}
Let \(\boldsymbol{\tau}=(\tau_1,\dots,\tau_m)\in \R_{\ge 0}^m\) satisfy
$\sum_{i=1}^m \tau_i>1$ and  $\tau_1\ge \cdots \ge \tau_m$
Let \(\Psi_{\boldsymbol\tau}=(\psi_1,\dots,\psi_m)\) be given by
\[
\psi_i(q)=q^{-\tau_i}, \qquad i=1,\dots,m,
\]
and denote by \(\cA_m(\Psi_{\boldsymbol\tau})\) the set of \(\Psi_{\boldsymbol\tau}\)-approximable vectors in \([0,1]^m\). The associated set of weighted \(\Psi_{\boldsymbol\tau}\)-badly approximable vectors is defined by
\[
\B_m(\Psi_{\boldsymbol\tau})
=
\cA_m(\Psi_{\boldsymbol\tau})
\setminus
\bigcap_{0<c<1}\cA_m(c\Psi_{\boldsymbol\tau}).
\]
The main result of this paper is that, for every ball \(B\subseteq [0,1]^m\),
\[
\dim_{\cH}\bigl(B\cap \B_m(\Psi_{\boldsymbol\tau})\bigr)
=
\dim_{\cH}\cA_m(\Psi_{\boldsymbol\tau}).
\]
The proof extends the Cantor-type construction and mass distribution arguments of Koivusalo, Levesley, Ward, and Zhang from the unweighted to the weighted setting, and is independent of recent results on weighted exact approximation. 
\end{abstract}

\maketitle

\section{Introduction}
  Throughout, let $\N$ denote the set of nonnegative integers. Fix a dimension $m\geq 1$. For any function $\Psi=(\psi_1,\cdots,\psi_m):\N\rightarrow \R_{>0}^m$, define the set of \textbf{$\Psi$-approximable vectors in $[0,1]^m$}, $\cA_{m}(\Psi)$, to be
	\begin{align*}
	\cA_{m}(\Psi) := \{\x=(x_1,&\cdots,x_m)\in [0,1]^m\,:\,|qx_i-p_i|<\psi_i(q),\\&\text{for infinitely many}\,q\in \N,\,\p=(p_1,\cdots,p_m)\in \Z^m, \,\,i=1,\cdots,m\}.
	\end{align*}
Define the set of \textbf{weighted $\Psi$-badly approximable vectors} as
 $$\B_m(\Psi):=\cA_{m}(\Psi) \setminus \bigcap \limits_{0<c<1} \cA_{m}(c\Psi)$$
where $c\Psi$ denotes the function $(c\psi_1,\cdots,c\psi_m)$. Equivalently,
	$$\B_{m}(\Psi) := \cA_{m}(\Psi) \setminus \bigcap \limits_{\boldsymbol{c} \in \R_{>0}^m} \cA_{m}(\boldsymbol{c} \Psi)$$
where $\boldsymbol{c} \Psi$ denotes the function $(c_1\psi_1,\cdots,c_m\psi_m)$, for $\boldsymbol c=(c_1,\cdots,c_m)$.\\ 

In this article, we will study the set $\B_m(\Psi)$ and focus on the case where $\psi_i(q) = q^{-\tau_i}$ for some $\boldsymbol \tau=(\tau_1,\cdots,\tau_m)$. In this case, we denote such $\Psi$ by $\Psi_{\boldsymbol \tau}$, and correspondingly denote $\cA_{m}(\Psi)$ and $\B_{m}(\Psi)$ by $\cA_{m}(\Psi_{\boldsymbol \tau})$ and $\B_{m}(\Psi_{\boldsymbol \tau})$. Let $\sigma=\tau_1+\cdots+\tau_m$. When $\sigma<1$, the answer follows from the well-known Dirichlet's Theorem for Diophantine Approximation, which states the following:

\begin{theorem}[Dirichlet's Theorem]
Let $a_i\geq 1$ for $1\leq i \leq m$ and $a_1+\cdots+a_m=m+1$. For any $\x=(x_1,\cdots,x_m)\in \R^m$, there exist infinitely many integers $p_1,\cdots,p_m,q$ such that
	$$\left|x_i-\frac{p_i}{q}\right|< q^{-a_i}\,\,\text{for}\,\,i=1,\cdots,m$$
\end{theorem}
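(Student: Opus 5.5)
The plan is to reduce the statement to Minkowski's linear forms theorem. Write $b_i=a_i-1$. The hypotheses then say $b_i\ge 0$ and $b_1+\cdots+b_m=1$. The desired inequality $|x_i-p_i/q|<q^{-a_i}$ is equivalent, for $q\ge 1$, to
\[
|qx_i-p_i|<q^{-b_i}, \qquad i=1,\dots,m.
\]

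\emph{Step 1: one solution for each $Q$.} Fix a real $Q\ge 1$ and consider the $m+1$ linear forms in $(q,p_1,\dots,p_m)\in\Z^{m+1}$:
\[
L_0=q,\qquad L_i=qx_i-p_i \quad (1\le i\le m).
\]
The matrix of these forms is triangular with determinant $\pm1$. Impose $|L_0|\le Q$ and $|L_i|<Q^{-b_i}$ for $i\ge 1$. The product of the bounds is $Q\cdot Q^{-(b_1+\cdots+b_m)}=1$, which equals the absolute value of the determinant. Since only one of the inequalities is non-strict, Minkowski's linear forms theorem gives a nonzero integer solution. If $q=0$, then $|p_i|<Q^{-b_i}\le 1$ forces $\p=0$, contradicting nontriviality. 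Replacing the solution by its negative if necessary, we get $1\le q\le Q$. Then $Q^{-b_i}\le q^{-b_i}$ because $b_i\ge 0$, so $|qx_i-p_i|<q^{-b_i}$ holds for every $i$. Dividing by $q$ yields the desired inequality.

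\emph{Step 2: infinitely many solutions.} This step is the only real subtlety, since Step 1 could a priori keep producing the same pair $(\p,q)$. Because $\sum b_i=1$, some index $k$ has $b_k>0$; let $K=\{i: b_i>0\}$.

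If some $x_k$ with $k\in K$ is irrational, the argument is direct. Suppose only finitely many pairs $(\p,q)$ satisfied the inequalities. For these pairs $\eta:=\min|qx_k-p_k|>0$. But Step 1 produces solutions with $|qx_k-p_k|<Q^{-b_k}$, and $Q^{-b_k}\to 0$ as $Q\to\infty$. Taking $Q$ with $Q^{-b_k}<\eta$ gives a new solution, which is a contradiction.

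If instead all $x_i$ with $i\in K$ are rational, let $s$ be a common denominator of these coordinates and take $q=ns$ for $n=1,2,\dots$. For $i\in K$ we choose $p_i$ with $qx_i-p_i=0$. For $i\notin K$ we have $b_i=0$, so we only need $|qx_i-p_i|<1$, which holds when $p_i$ is the nearest integer to $qx_i$ (the distance is at most $1/2$). This gives infinitely many distinct $q$.

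I expect the main point of care to be the strict versus non-strict inequalities in Step 1. This is exactly why the linear forms version of Minkowski's theorem is used, rather than the convex body theorem with volume $\ge 2^{m+1}$. Beyond that, the essential observation is that some weight $b_k$ must be positive, which is what forces new solutions to appear.
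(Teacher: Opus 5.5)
Your proof is correct and complete. The paper gives no proof of this statement: it quotes Dirichlet's theorem as classical and only uses it, first to dispose of the case $\sigma<1$ and later to assert that the set $P(\mathbf x)$ of approximating rationals is infinite. So there is no argument in the paper to compare against.

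Your route through Minkowski's linear forms theorem is the standard one, and it suits the statement as given. With real exponents $a_i$ and strict inequalities, the box-principle argument runs cleanly only when every $Q^{a_i-1}$ is an integer, and otherwise loses a constant factor. By contrast, one non-strict bound $|q|\le Q$, with the product of the bounds equal to $1=|\det|$, gives the strict inequalities for every real $Q\ge 1$. The hypothesis $a_i\ge 1$ enters exactly where you use it: to rule out $q=0$ and to pass from $Q^{-b_i}$ to $q^{-b_i}$.

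Your case split in Step 2 is genuinely needed. For rational $\mathbf x$, all solutions may represent the same rational point; for $m=1$ and $x=0$, the only admissible fractions are $0/q$. So ``infinitely many'' can only mean infinitely many integer tuples. Equivalently, it means infinitely many denominators $q$, since for fixed $q$ there are at most $2^m$ admissible $\mathbf p$. Both of your cases deliver exactly this.
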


Set $\lambda = \frac{1}{m}(1-\sigma)$. Then for any fixed $c\in (0,1)$, when $q$ is large enough, $cq^{-\tau_i} \geq q^{-\tau_i-\lambda}$. Hence $$\cA_m(\Psi_{\boldsymbol \tau + \lambda})\subseteq\cA_m(c\Psi_{\boldsymbol \tau}).$$
By Dirichlet's Theorem, $\cA_m(\Psi_{\boldsymbol \tau+\lambda})=[0,1]^m$. This implies $\cA_m(c\Psi_{\boldsymbol \tau})=[0,1]^m$ for any $c\in (0,1)$, which implies that $\B_m(\Psi_{\boldsymbol \tau})=\emptyset$.
 
The case $\sigma=1$ has been studied extensively. Despite $\B_m(\boldsymbol \tau)$ being null in the sense that it has zero Lebesgue measure, Schmidt \cite{SCHMIDT1969139} showed that it has full Hausdorff dimension, that is, $\dim_{\cH} \B_m(\boldsymbol \tau) = m$.
  
For $\sigma > 1$, Rynne and Dickinson have computed in \cite{Rynne2000} the Hausdorff dimension of $\cA_m(\Psi_{\boldsymbol \tau})$ using the direct method. More precisely, if $\tau_1 \geq \tau_2 \geq \cdots \geq \tau_m > 0$ and $\sigma > 1$, then
\begin{equation}
\dim_{\cH} \cA_m (\Psi_{\boldsymbol \tau})
=
\min_{1 \leq i \leq m}
\left\{
\frac{m+1 + \sum_{k=i}^m (\tau_i - \tau_k)}{1+\tau_i}
\right\}.
\end{equation}
In \cite{wang2015mass}, Wang and Wu reached the same conclusion using Mass Transference Principle. In \cite{KoivusaloLevesleyWardZhang2024}, Koivusalo, Levesley, Ward and Zhang proved that if $\boldsymbol \tau = (\tau_1,\dots,\tau_m)$ with $\tau_1 = \cdots = \tau_m > \frac{1}{m}$, then for any ball $B \subseteq [0,1]^m$,
\[
\dim_{\cH}\bigl(B \cap \B_m(\Psi_{\boldsymbol \tau})\bigr)
=
\dim_{\cH} \cA_m(\Psi_{\boldsymbol \tau})
=
\frac{m+1}{1+\tau_1}.
\]
Furthermore, Bandi and De Saxc\'{e} showed in \cite{bandi2023}, using a different method, that the set of (unweighted) exact $\Psi_{\boldsymbol \tau}$-approximable vectors has the same Hausdorff dimension as $\cA_m(\Psi_{\boldsymbol \tau})$. That is, for $\boldsymbol \tau = (\tau_1,\dots,\tau_1)$, $\tau_1>\frac{1}{m}$,
\[
\dim_{\cH} E_m(\Psi_{\boldsymbol \tau})
=
\dim_{\cH} \cA_m(\Psi_{\boldsymbol \tau}),
\]
where 
$E_m(\Psi_{\boldsymbol \tau})
:=
\cA_m(\Psi_{\boldsymbol \tau})
\setminus
\bigcup_{0<c<1} \cA_m(c \Psi_{\boldsymbol \tau})$. Since
\[
E_m(\Psi_{\boldsymbol \tau})
\subseteq
\B_m(\Psi_{\boldsymbol \tau})
\subseteq
\cA_m(\Psi_{\boldsymbol \tau}),
\]
the result of Bandi and De Saxc\'{e} in \cite{bandi2023} provides an independent proof for
\[
\dim_{\cH} \B_m(\Psi_{\boldsymbol \tau})
=
\frac{m+1}{1+\tau_1}
\]
when $\boldsymbol \tau = (\tau_1,\dots,\tau_1)$, $\tau_1>\frac{1}{m}$.\\

  The aim of this article is to extend the result of Koivusalo et al.\ in \cite{KoivusaloLevesleyWardZhang2024} to the weighted setting by proving the following theorem:

\begin{theorem}\label{tb:1}
Let \(\boldsymbol\tau=(\tau_1,\dots,\tau_m)\in \R_{\geq 0}^m\) satisfy
\[
\sum_{i=1}^m \tau_i>1
\qquad\text{and}\qquad
\tau_1\geq \cdots \geq \tau_m.
\]
Then, for any ball \(B\subseteq [0,1]^m\),
\begin{equation}
\dim_{\cH}\bigl(B\cap \B_m(\Psi_{\boldsymbol\tau})\bigr)
=
\dim_{\cH}\cA_m(\Psi_{\boldsymbol\tau})
=
\min_{1\leq i\leq m}
\left\{
\frac{m+1+\sum_{k=i}^m(\tau_i-\tau_k)}{1+\tau_i}
\right\}
=: s.
\end{equation}
\end{theorem}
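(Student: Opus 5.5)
The upper bound is immediate. Since $\B_m(\Psi_{\boldsymbol\tau})\subseteq \cA_m(\Psi_{\boldsymbol\tau})$, monotonicity of Hausdorff dimension gives $\dim_{\cH}(B\cap\B_m(\Psi_{\boldsymbol\tau}))\le \dim_{\cH}\cA_m(\Psi_{\boldsymbol\tau})$. The value of the latter is the Rynne--Dickinson formula, which I take as known from \cite{Rynne2000}. Strictly, that formula is stated for $\tau_m>0$. If some $\tau_k=0$, then $|qx_k-p_k|<1$ is a vacuous condition, so the problem reduces to lower dimension, and the formula still applies by the same covering argument. So the whole task is the lower bound $\dim_{\cH}(B\cap\B_m(\Psi_{\boldsymbol\tau}))\ge s$, and I will do this by a Cantor-type construction in the style of Koivusalo--Levesley--Ward--Zhang.

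Fix a small $c\in(0,1)$ and a rapidly increasing sequence of scales $Q_1\ll Q_2\ll\cdots$. At level $n$ the building blocks are the rectangles
\[
R(\p,q)=\prod_{i=1}^m\Bigl(\tfrac{p_i}{q}-\tfrac{\psi_i(q)}{q},\ \tfrac{p_i}{q}+\tfrac{\psi_i(q)}{q}\Bigr),\qquad q\in[Q_n,2Q_n],
\]
taken only for those $(\p,q)$ that are well separated. Concretely, $\p/q$ must lie in a chosen level-$(n-1)$ set, stay outside a $\kappa$-neighbourhood (rescaled in each coordinate) of every rational $\p'/q'$ with $q'<Q_n$, and have $\gcd(\p,q)=1$. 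Inside each chosen rectangle I keep only a shrunk core $R^{\ast}$ with side lengths $\tfrac12\psi_i(q)/q$. This ensures that every limit point $\x$ satisfies $|qx_i-p_i|<\psi_i(q)$ for one $q$ per level, so $\x\in\cA_m(\Psi_{\boldsymbol\tau})$.

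The key step, and the one that makes $\x\notin\cA_m(c\Psi_{\boldsymbol\tau})$, is to exclude every approximation $|q'x_i-p'_i|<c\,\psi_i(q')$. Between levels I delete the $c\Psi$-neighbourhoods of all rationals with denominators in $(2Q_{n-1},Q_n)$, and also those of denominators near $Q_n$ other than the chosen centre. I will show that these deletions remove only a proportion $O(c^{\,\sigma'})$ of each level-$(n-1)$ rectangle, where $\sigma'>0$ depends on $\boldsymbol\tau$. For this I use a counting lemma. Rationals with denominator $\le Q$ lying in a box of side lengths $\ell_i$ number at most $\ll Q^{m+1}\prod\ell_i+$ (lower-dimensional terms). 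This follows by slicing with Minkowski's theorem, i.e.\ Dirichlet's Theorem in its weighted form, together with a simplex lemma that handles the case where all such rationals lie on an affine hyperplane. Once $c$ is small, what survives still contains $\gg Q_n^{m+1}|R|$ well-spaced new rectangles inside each old rectangle $R$.

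To get the dimension I use the mass distribution principle. Mass is split equally among the children at each level, and I estimate $\mu(\text{ball of radius }r)\ll r^{s-\epsilon}$ scale by scale. The children are thin anisotropic rectangles, so a ball of radius $r$ with $\psi_{i+1}(Q_n)/Q_n\le r\le\psi_i(Q_n)/Q_n$ meets them in different ways in the long and short directions. Optimising over $r$ in each range reproduces exactly the $i$-th term $\frac{m+1+\sum_{k\ge i}(\tau_i-\tau_k)}{1+\tau_i}$, and taking the minimum gives $s$, in parallel with the Rynne--Dickinson covering. I expect this anisotropic mass estimate to be the main obstacle. It has to be uniform across the intermediate scales between levels, and it requires that the deletions of the previous paragraph do not destroy the local uniformity of the distribution of the centres $\p/q$. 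That forces me to prove the counting and separation lemma in a localised, weighted form, with error terms that are small compared with the main term $Q_n^{m+1}\prod\ell_i$ at every scale $\ell_i\ge\psi_i(Q_n)/Q_n$. Finally, letting $\epsilon\to0$ and choosing $Q_n$ to grow fast enough gives $\dim_{\cH}(B\cap\B_m(\Psi_{\boldsymbol\tau}))\ge s$.
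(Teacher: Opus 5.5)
Your upper bound is fine, and the overall shape of your lower bound matches the paper's: a Cantor set of $\Psi_{\boldsymbol\tau}$-boxes avoiding $c\Psi_{\boldsymbol\tau}$-neighbourhoods, followed by the mass distribution principle. However, the step you flag as the main obstacle is where the weighted problem actually lives, and your plan for it does not work as stated.

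\textbf{The missing choice of spacing.} Take a full-density family of pairwise disjoint children with denominators $\asymp Q$. It necessarily has some anisotropic spacing $Q^{-\rho_i}$ in the $i$-th direction, with $\sum_i\rho_i=m+1$ and $\rho_i\le 1+\tau_i$ so that the boxes $\Delta(\p/q,q^{-1-\boldsymbol\tau})$ fit. Choosing the weights in ``Dirichlet's Theorem in its weighted form'' is exactly choosing this $\boldsymbol\rho$, and you never make that choice. When $\tau_m<1/m$ the isotropic choice $\rho_i=1+1/m$ is unavailable, because the spacing $Q^{-1-1/m}$ is smaller than the side $Q^{-1-\tau_m}$ of the children.

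\textbf{Where your mass estimate breaks.} Your optimisation over $r$ between the side lengths of a rectangle reproduces the terms of $s$ only when the ball sees one rectangle whose mass has already been spread out by much smaller descendants. That is Case~2 in the proof of Theorem~\ref{tb:3}. When $r$ lies between the spacing of the children and their sizes (Case~3), a ball can meet about $\prod_i\max\{1,rQ^{\rho_i}\}$ children. The resulting exponent is then $\hat s_{\boldsymbol\rho}$ of Theorem~\ref{tb:2}, which depends on $\boldsymbol\rho$. Your proposed cure cannot hold: a count $\asymp Q_n^{m+1}\prod_i\ell_i$ with small relative error at every scale $\ell_i\ge Q_n^{-1-\tau_i}$ is impossible. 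At $\ell_i=Q_n^{-1-\tau_i}$ that main term is $Q_n^{1-\sigma}<1$. For anisotropically spaced centres, the count in a cube can exceed it by a power of $Q$.

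\textbf{An example where the choice matters.} Take $m=3$ and $\boldsymbol\tau=(1,0.3,0.3)$, so $s=2.7$. With the admissible spacing $\boldsymbol\rho=(1.9,1.05,1.05)$, a ball of radius $r=Q^{-1.3}$ may meet up to $Q^{0.6}$ children, each of mass $\asymp Q^{-4}$. The best exponent the spacing count then gives is $\hat s_{\boldsymbol\rho}=3.4/1.3<2.62<s$.

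\textbf{How the paper handles this.} It first proves the lower bound $\min\{\hat s_{\boldsymbol\rho},s\}-2\epsilon$ for every admissible rational $\boldsymbol\rho$. It then uses the Wang--Wu characterisation of $s$ to select $\boldsymbol\rho$ with $\hat s_{\boldsymbol\rho}=s$:
\begin{itemize}
\item $\rho_i=1+1/m$ for all $i$ if $\tau_m\ge1/m$;
\item otherwise $\rho_i=1+\tau_i$ for $i>K$ and $\rho_1=\dots=\rho_K$ (in the example, $\boldsymbol\rho=(1.4,1.3,1.3)$).
\end{itemize}
This is followed by rational approximation of $\boldsymbol\rho$ and continuity of $\hat s_{\boldsymbol\rho}$. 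This ingredient is absent from your plan, so as written it does not reach $s$.

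\textbf{A smaller point.} You never delete the $c\Psi_{\boldsymbol\tau}$-neighbourhood of each chosen centre, although these are precisely the rationals approximating your limit points. To guarantee $\x\notin\cA_m(c\Psi_{\boldsymbol\tau})$, you must also remove that central box from every core; this costs only a proportion $O(c^m)$.
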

\begin{remark}
This article, and in particular this main theorem, forms part of my undergraduate final-year project. 
During the preparation of this manuscript, I became aware that Bandi and Fregoli had circulated a preprint \cite{bandi2026hausdorffdimensionweightedexactly} in which they prove that, for general $\boldsymbol \tau \in \R_{>0}^m$, if $\sigma= \sum_{i=1}^m \tau_i>1$, then the set of (weighted) exact $\Psi_{\boldsymbol \tau}$-approximable vectors has the same Hausdorff dimension as $\cA_m(\Psi_{\boldsymbol \tau})$. An immediate consequence of their result is that, for $\sigma>1$,
\[
\dim_{\cH} \B_m(\Psi_{\boldsymbol \tau})
=
\dim_{\cH} \cA_m(\Psi_{\boldsymbol \tau}).
\]
Thus, the work of Bandi and Fregoli establishes a result which is, in a certain sense, stronger than the one considered in this article. Nevertheless, the proof presented here is independent of their approach and instead emphasises the local structure underlying the problem.
\end{remark}

We first consider the case \(\tau_m=0\). In this case, it is easy to see that
\[
\B_m(\Psi_{\boldsymbol\tau})
=
\B_{m-1}\bigl(\Psi_{(\tau_1,\dots,\tau_{m-1})}\bigr)\times [0,1].
\]
Consequently, the statement of the theorem in this case reduces to the corresponding statement in dimension \(m-1\).

The case \(m=1\) for Theorem \ref{tb:1} was proved by Koivusalo, Levesley, Ward, and Zhang in \cite{KoivusaloLevesleyWardZhang2024}, since in one dimension there is no distinction between weighted and unweighted approximation. So by an induction argument, we only need to consider the case \(\tau_m>0\) for Theorem \ref{tb:1}.

We claim that, in this case, Theorem~\ref{tb:1} follows directly from the following result.
\begin{theorem}\label{tb:2}
Suppose that \(\boldsymbol{\tau}=(\tau_1,\dots,\tau_m)\in \R_{>0}^m\) satisfies
\[
\sum_{i=1}^m \tau_i>1
\qquad\text{and}\qquad
\tau_1\ge \tau_2\ge \cdots \ge \tau_m.
\]
Let \(B\subseteq [0,1]^m\), let \(\epsilon>0\), and let \(\boldsymbol{\rho}=(\rho_1,\dots,\rho_m)\in \Q_{\ge 1}^m\) satisfy
\[
\sum_{i=1}^m \rho_i=m+1,\qquad \rho_1\ge \cdots \ge \rho_m,
\qquad\text{and}\qquad
1+\tau_i-\rho_i>0 \quad \text{for } i=1,\dots,m.
\]
Let
\[
\mathcal A:=\{1+\tau_i,\rho_i:i=1,\dots,m\}.
\]
For each \(A\in \mathcal A\), let \(\nu_1\in\{1,\dots,m\}\) be the integer such that
\[
A\le \rho_{\nu_1}\quad\text{but}\quad A>\rho_{\nu_1+1};
\]
if no such integer exists, set \(\nu_1=0\). Likewise, let \(\nu_2\in\{1,\dots,m\}\) be the integer such that
\[
A\ge 1+\tau_{\nu_2}\quad\text{but}\quad A<1+\tau_{\nu_2-1};
\]
if no such integer exists, set \(\nu_2=m+1\). Define
\begin{equation}\label{eqb:16}
\hat{s}_{\boldsymbol\rho}
:=
\min_{A\in \mathcal A}
\left\{
\sum_{i=1}^{\nu_1}1
+
\sum_{i=\nu_2}^m 1
-
\sum_{i=\nu_2}^m \frac{1+\tau_i-\rho_i}{A}
+
\sum_{i=\nu_1+1}^{\nu_2-1}\frac{\rho_i}{A}
\right\},
\end{equation}
and set
\[
s_{\boldsymbol\rho}:=\min\{\hat{s}_{\boldsymbol\rho},s\}.
\]
Then one may construct a Cantor subset
\[
\D_\epsilon(B)\subseteq B\cap \B_m(\Psi_{\boldsymbol\tau})
\]
such that
\[
\dim_{\cH}\D_\epsilon(B)\ge s_{\boldsymbol\rho}-2\epsilon.
\]
\end{theorem}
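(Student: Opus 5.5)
We will construct $\D_\epsilon(B)$ as a nested Cantor set adapted to the weighted geometry, following the scheme of Koivusalo, Levesley, Ward and Zhang \cite{KoivusaloLevesleyWardZhang2024}.

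\textbf{The two scales and the approximation step.} Since $\boldsymbol\rho\in\Q_{\ge1}^m$, fix a common denominator and a rapidly increasing sequence of integer scales $Q_1<Q_2<\cdots$ such that each $Q_k^{\rho_i}$ is an integer power. By Dirichlet's Theorem with exponents $\rho_i$ (which satisfy $\sum\rho_i=m+1$), together with a standard counting argument (Minkowski/pigeonhole applied to the boxes of side $Q^{-\rho_i}$), every rectangle of sides $\asymp Q^{-\rho_i}$ contains, at level $Q$, a rational point $\p/q$ with $q\asymp Q$. These rational points are well separated in the weighted sense.

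Around each chosen point $\p/q$ we place the "hit" rectangle
\[
R(\p/q)=\prod_i\Bigl[\tfrac{p_i}{q}-\tfrac{\psi_i(q)}{2q},\ \tfrac{p_i}{q}+\tfrac{\psi_i(q)}{2q}\Bigr],
\]
with sides $\asymp Q^{-1-\tau_i}$. The hypothesis $1+\tau_i>\rho_i$ guarantees that this rectangle is much thinner than the Dirichlet box. Every point of $R(\p/q)$ satisfies $|qx_i-p_i|<\psi_i(q)$, and we use such rectangles at infinitely many levels, so $\D_\epsilon(B)\subseteq\cA_m(\Psi_{\boldsymbol\tau})$.

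\textbf{Staying out of $\cA_m(c\Psi_{\boldsymbol\tau})$.} To exclude points of $\bigcap_c\cA_m(c\Psi_{\boldsymbol\tau})$, we fix a small $c_0$. Inside $R(\p/q)$ we remove the $c_0$-neighbourhoods of all rationals $\p'/q'$ with $Q_k\le q'<Q_{k+1}$ other than the chosen ones, and also the central sub-rectangle of relative size $c_0$ around $\p/q$ itself. Because the scales are sparse and the $\tau_i$ are positive, a volume/counting estimate shows that only a proportion $O(c_0^{\,\cdot})$ of the sub-boxes at the next level is destroyed. This is the weighted analogue of the "good rectangle" lemma in \cite{KoivusaloLevesleyWardZhang2024}. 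Every surviving point then has $|q'x_i-p_i'|\ge c_0\psi_i(q')$ for some $i$, for all large $q'$. Hence $\D_\epsilon(B)\cap\cA_m(c_0\Psi_{\boldsymbol\tau})=\emptyset$ up to finitely many $q'$, so $\D_\epsilon(B)\subseteq\B_m(\Psi_{\boldsymbol\tau})$. Placing the first level inside $B$ gives $\D_\epsilon(B)\subseteq B$.

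\textbf{Dimension bound.} We define a mass distribution $\mu$ on $\D_\epsilon(B)$ by splitting mass equally among the surviving children at each level. We then verify $\mu(\text{ball of radius }r)\ll r^{s_{\boldsymbol\rho}-2\epsilon}$ and apply the mass distribution principle. The intermediate radii force a case analysis. For a ball of radius $r$ with $r$ comparable to one of the side lengths $Q^{-A}$, $A\in\mathcal A$, the ball is:
\begin{itemize}
\item wider than the Dirichlet side in the directions $i\le\nu_1$, contributing a factor $1$ each;
\item narrower than even the thin side in the directions $i\ge\nu_2$, contributing the factor $1-(1+\tau_i-\rho_i)/A$ from the counting of rectangles versus their thickness;
\item intermediate in the directions $\nu_1<i<\nu_2$, contributing $\rho_i/A$.
\end{itemize}
Counting how many level-$k$ rectangles meet the ball gives exactly the expression in \eqref{eqb:16}. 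Since the exponent is piecewise linear in $\log r$, checking only at the breakpoints $A\in\mathcal A$ suffices. The larger radii, between consecutive levels, are governed by the Rynne--Dickinson value, which is why $s$ appears in the minimum. The $\epsilon$-losses come from the constants $c_0$ and from the sparseness of the $Q_k$.

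\textbf{Main obstacle.} The hardest step will be the removal estimate: showing uniformly that the bad rationals $\p'/q'$ delete only a small fraction of each weighted rectangle. With unequal $\tau_i$ the neighbourhoods are anisotropic and may align badly with the Dirichlet grid. I would first try to handle this by bounding the number of rationals with denominator up to $Q_{k+1}$ meeting a fixed rectangle, using a lattice-point count in the associated convex body, via Minkowski's second theorem.
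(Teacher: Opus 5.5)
Your overall architecture is broadly the paper's: thin $\boldsymbol\tau$-boxes inside $\boldsymbol\rho$-boxes around rationals, deletion of $c$-neighbourhoods to land in $\B_m(\Psi_{\boldsymbol\tau})$, and a mass-distribution case analysis with breakpoints $A\in\mathcal A$. But the step that feeds your construction is false, and its correct replacement is the heart of the proof.

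\textbf{The supply step.} It is not true that every box of sides $\asymp Q^{-\rho_i}$ contains a rational with $q\asymp Q$. Suppose $\rho_m>1$, as in the cases actually needed (e.g.\ $\rho_i\to 1+\frac1m$). Any rational $\p/q\neq\mathbf a/b$ satisfies $\max_i|p_i/q-a_i/b|\ge 1/(bq)$. Hence once $Q^{\rho_m-1}>Cb$, the box $\Delta(\mathbf a/b,CQ^{-\boldsymbol\rho})$ contains no rational with $q\le Q$ other than $\mathbf a/b$ itself. Writing $\mathbf a/b=k\mathbf a/(kb)$ does not help: that $\boldsymbol\tau$-box lies inside the $c_0$-neighbourhood of $\mathbf a/b$, which you must delete.

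Nor are such rationals well separated: the $\boldsymbol\rho$-boxes overlap, which is why the paper needs the Vitali Lemma~\ref{lb:7}. So children do not tile parents, and only a measure (ubiquity) statement is available.

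You must then show, uniformly at every level, that the $\boldsymbol\rho$-boxes whose centres have $\boldsymbol\tau$-boxes surviving the deletion still carry a fixed proportion of the parent's measure. The danger is that surviving points just outside a deleted neighbourhood may have no admissible approximant over a long range of scales. Your proposal has no argument for this interaction between deletion and selection.

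The paper's missing idea is the \emph{leading rationals}:
\begin{itemize}
\item It deletes $\boldsymbol\delta(n)$-neighbourhoods of the Simplex-Lemma hyperplanes to form $\cC^{\boldsymbol\tau}(N)$.
\item Lemma~\ref{lb:5} shows every point of $\cC^{\boldsymbol\tau}(N)$ is $3q^{-\boldsymbol\rho}$-approximable by infinitely many leading rationals. If a Dirichlet approximant lies in a deleted region, the point is caught by the leading rational of the hyperplane responsible.
\item Lemma~\ref{lb:6} shows each leading rational's $\boldsymbol\tau$-box contains a surviving rectangle of comparable volume.
\item Lemma~\ref{lb:8} converts this limsup statement into finite disjoint families carrying a fixed proportion $c_2$ of measure.
\end{itemize}
A local-deletion scheme like yours might be made to work, but only with a substitute for these lemmas.

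\textbf{The deletion estimate.} You flag this as the main obstacle, and it is not supplied. A lattice-point count via Minkowski's second theorem will not give it by itself. By the Simplex Lemma, rationals with denominator up to $Q'$ in a box of volume $\lesssim Q'^{-m-1}$ lie on a hyperplane, and their neighbourhoods overlap. So (number of rationals)$\times$(volume of one neighbourhood) can vastly exceed the true loss.

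For instance, take $m=2$, $\boldsymbol\tau=(0.6,0.6)$. About $q^{0.4}$ points $(1,1)/(q+j)$ lie in the $\boldsymbol\tau$-box of $(1,1)/q$. The union bound gives relative loss $\asymp c_0^2q^{0.4}$, whereas the true loss is $O(c_0)$. The paper sidesteps this by deleting whole hyperplane neighbourhoods at once and proving global and local density of $\cC^{\boldsymbol\tau}(N)$ (Lemmas~\ref{lb:3} and~\ref{lb:4}).

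\textbf{The mass distribution.} Your equal splitting of mass relies on the unproved $q\asymp Q_k$. In the paper, ubiquity comes only in limsup form, so denominators within one level are not comparable. This is why mass is distributed proportionally to $\lambda_m$ and Case~3 splits into dyadic denominator ranges $\V_t$, at the cost of a $\log r$ factor and one of the two $\epsilon$'s.
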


\begin{proof}[Proof of Theorem~\ref{tb:1}]
Fix \(B\subseteq [0,1]^m\) and \(\boldsymbol{\tau}=(\tau_1,\dots,\tau_m)\in \R_{>0}^m\). The upper bound
\[
\dim_{\cH}\bigl(B\cap \B_m(\Psi_{\boldsymbol\tau})\bigr)\le s
\]
follows from the inclusion
\[
B\cap \B_m(\Psi_{\boldsymbol\tau})\subseteq \cA_m(\Psi_{\boldsymbol\tau})
\]
together with the identity
\[
\dim_{\cH}\cA_m(\Psi_{\boldsymbol\tau})=s.
\]
It remains to prove the lower bound
\[
\dim_{\cH}\bigl(B\cap \B_m(\Psi_{\boldsymbol\tau})\bigr)\ge s.
\]

In \cite{wang2021masstransferenceprinciplerectangles}, Wang and Wu gave an alternative characterization of the dimension \(s\). Namely, setting
\[
a_i=\rho_i,\qquad t_i=1+\tau_i-\rho_i,\qquad i=1,\dots,m,
\]
and
\[
\mathcal A=\{a_i,a_i+t_i:i=1,\dots,m\},
\]
one has
\begin{equation}\label{eqb:15}
s\ge
\min_{A\in \mathcal A}
\left\{
\sum_{i\in \mathcal K_1}1
+
\sum_{i\in \mathcal K_2}1
-
\sum_{i\in \mathcal K_2}\frac{t_i}{A}
+
\sum_{i\in \mathcal K_3}\frac{\rho_i}{A}
\right\},
\end{equation}
where
\[
\mathcal K_1:=\{i:a_i\ge A\},\qquad
\mathcal K_2:=\{i:a_i+t_i\le A\}\setminus \mathcal K_1,
\qquad
\mathcal K_3:=\{1,\dots,m\}\setminus (\mathcal K_1\cup \mathcal K_2).
\]
Under the assumptions
\[
\rho_1\ge \cdots \ge \rho_m
\qquad\text{and}\qquad
1+\tau_i-\rho_i>0 \quad (i=1,\dots,m),
\]
the index sets \(\mathcal K_1,\mathcal K_2,\mathcal K_3\) become contiguous blocks, and \eqref{eqb:15} coincides with \eqref{eqb:16}.

We now distinguish two cases.

\begin{enumerate}
\item \textbf{Case 1: \(\tau_m\ge \frac1m\).}

In this case, it was shown in \cite{wang2021masstransferenceprinciplerectangles} that if we take
\[
\rho_i=1+\frac1m,\qquad
t_i=1+\tau_i-\rho_i,
\qquad i=1,\dots,m,
\]
then
\[
s
=
\min_{A\in \mathcal A}
\left\{
\sum_{i\in \mathcal K_1}1
+
\sum_{i\in \mathcal K_2}1
-
\sum_{i\in \mathcal K_2}\frac{t_i}{A}
+
\sum_{i\in \mathcal K_3}\frac{\rho_i}{A}
\right\}
=
\hat{s}_{\boldsymbol\rho}.
\]

Since \(\tau_1\ge \cdots \ge \tau_m\) and \(\sum_{i=1}^m\tau_i>1\), we can choose a sequence
\[
\{\boldsymbol\rho_k=(\rho_{1,k},\dots,\rho_{m,k})\}_{k\in \N}
\]
such that, for every \(k\in \N\),
\[
\rho_{i,k}\in \Q,\qquad \rho_{i,k}\ge 1,\qquad
\sum_{i=1}^m \rho_{i,k}=m+1,\qquad
1+\tau_i-\rho_{i,k}>0,
\]
\[
\rho_{1,k}\ge \cdots \ge \rho_{m,k},
\qquad\text{and}\qquad
\rho_{i,k}\to 1+\frac1m \quad \text{as } k\to\infty
\quad (i=1,\dots,m).
\]
Let \(\epsilon_k:=1/k\). Applying Theorem~\ref{tb:2} with \(\boldsymbol\rho=\boldsymbol\rho_k\) and \(\epsilon=\epsilon_k\), we obtain a Cantor subset
\[
\D_{\epsilon_k}(B)\subseteq B\cap \B_m(\Psi_{\boldsymbol\tau})
\]
such that
\[
\dim_{\cH}\D_{\epsilon_k}(B)\ge \min\{\hat{s}_{\boldsymbol\rho_k},s\}-\frac{2}{k}.
\]
Hence
\[
\dim_{\cH}\bigl(B\cap \B_m(\Psi_{\boldsymbol\tau})\bigr)
\ge
\min\{\hat{s}_{\boldsymbol\rho_k},s\}-\frac{2}{k}.
\]
Letting \(k\to\infty\), and using continuity of \(\hat{s}_{\boldsymbol\rho}\) in \(\boldsymbol\rho\), we obtain
\[
\dim_{\cH}\bigl(B\cap \B_m(\Psi_{\boldsymbol\tau})\bigr)
\ge
\min\{\hat{s}_{(1+1/m,\dots,1+1/m)},s\}
=
\min\{s,s\}
=
s.
\]

\item \textbf{Case 2: \(\tau_m<\frac1m\).}

Let \(K\) be the largest integer such that
\[
\tau_K>\frac{1-(\tau_{K+1}+\cdots+\tau_m)}{K}.
\]
Define
\[
\rho_i=
\begin{cases}
1+\dfrac{1-(\tau_{K+1}+\cdots+\tau_m)}{K}, & 1\le i\le K,\\[1em]
1+\tau_i, & K+1\le i\le m,
\end{cases}
\]
and let
\[
t_i=1+\tau_i-\rho_i,\qquad i=1,\dots,m.
\]
Then
\[
s
=
\min_{A\in \mathcal A}
\left\{
\sum_{i\in \mathcal K_1}1
+
\sum_{i\in \mathcal K_2}1
-
\sum_{i\in \mathcal K_2}\frac{t_i}{A}
+
\sum_{i\in \mathcal K_3}\frac{\rho_i}{A}
\right\}
=
\hat{s}_{\boldsymbol\rho}.
\]

Now choose a sequence
\[
\{\boldsymbol\rho_k=(\rho_{1,k},\dots,\rho_{m,k})\}_{k\in \N}
\]
such that
\[
\rho_{i,k}\in \Q,\qquad \rho_{i,k}\ge 1,\qquad
\sum_{i=1}^m \rho_{i,k}=m+1,\qquad
1+\tau_i-\rho_{i,k}>0,
\]
\[
\rho_{1,k}\ge \cdots \ge \rho_{m,k},
\qquad\text{and}\qquad
\rho_{i,k}\to \rho_i \quad \text{as } k\to\infty
\quad (i=1,\dots,m).
\]
Letting again \(\epsilon_k:=1/k\), and applying Theorem~\ref{tb:2} with \(\boldsymbol\rho=\boldsymbol\rho_k\) and \(\epsilon=\epsilon_k\), we obtain
\[
\dim_{\cH}\bigl(B\cap \B_m(\Psi_{\boldsymbol\tau})\bigr)
\ge
\min\{\hat{s}_{\boldsymbol\rho_k},s\}-\frac{2}{k}.
\]
Passing to the limit and using continuity of \(\hat{s}_{\boldsymbol\rho}\), we deduce that
\[
\dim_{\cH}\bigl(B\cap \B_m(\Psi_{\boldsymbol\tau})\bigr)\ge s.
\]
\end{enumerate}

Combining this with the upper bound completes the proof of Theorem~\ref{tb:1}.
\end{proof}

We devote the remainder of the article to the proof of Theorem~\ref{tb:2}. The argument adapts the proof of the unweighted case due to Koivusalo et al.~\cite{KoivusaloLevesleyWardZhang2024}. The proof proceeds in three steps.
\begin{itemize}
\item \textbf{Step 1. Construction of \(\cC^{\boldsymbol\tau}(N)\).}
Given \(\boldsymbol\rho=(\rho_1,\cdots,\rho_m)\in \Q_{\geq 1}^m\) such that
\[
\sum_{i=1}^m \rho_i=m+1,\qquad \rho_1\geq \cdots\geq \rho_m,\qquad \text{and}\qquad 1+\tau_i-\rho_i>0 \,\,\,\text{for }\,\,i=1,\cdots,m,
\]
we choose a suitable integer \(N\in \N\) and construct a set \(\cC^{\boldsymbol\tau}(N)\), together with a vector \(\boldsymbol c(N)=(c_1(N),\cdots,c_m(N))\), with the property that for every rational point $
\frac{\mathbf p}{q}=\left(\frac{p_1}{q},\cdots,\frac{p_m}{q}\right)\in \Q^m\cap [0,1]^m$,
one has
\[
\cC^{\boldsymbol\tau}(N)\cap \prod_{i=1}^m
\left[
\frac{p_i}{q}-c_i(N)q^{-1-\tau_i},
\frac{p_i}{q}+c_i(N)q^{-1-\tau_i}
\right]
=\emptyset.
\]
The set \(\cC^{\boldsymbol\tau}(N)\) is obtained by successively removing neighbourhoods of suitable rational points. In particular, if \(\mathbf x\in \cC^{\boldsymbol\tau}(N)\), then $\mathbf x\notin \cA_m(\boldsymbol c(N)\Psi_{\boldsymbol\tau}).$

\item \textbf{Step 2. Definition of the leading rationals.}
The construction of \(\cC^{\boldsymbol\tau}(N)\) naturally determines a distinguished subset of \(\Q^m\), which we call the \emph{leading rationals}. We shall establish two key structural properties of this set. These leading rationals form the basis for the later construction of \(\D_\epsilon(B)\).

\item \textbf{Step 3. Construction of \(\D_\epsilon(B)\).}
For any ball \(B\subseteq [0,1]^m\) and any fixed \(\epsilon>0\), we use the set of leading rationals to construct a Cantor set \(\D_\epsilon(B)\) such that
\[
\D_\epsilon(B)\subseteq B\cap \cC^{\boldsymbol\tau}(N)\cap \cA_m(\Psi_{\boldsymbol\tau}).
\]
Since points in \(\cC^{\boldsymbol\tau}(N)\) do not lie in \(\cA_m(\boldsymbol c(N)\Psi_{\boldsymbol\tau})\), it follows that
\[
\D_\epsilon(B)\subseteq B\cap \B_m(\Psi_{\boldsymbol\tau}).
\]
The key tool in this construction is the \(T_{G,\cR}\)-Lemma, which is inspired by the \(K_{G,B}\)-covering lemma in \cite{BeresnevichVelani2006} and the \(T_{G,I}\)-lemma in \cite{KoivusaloLevesleyWardZhang2024}. Finally, we show that \(\D_\epsilon(B)\) has sufficiently large Hausdorff dimension by applying the following mass distribution principle.
\end{itemize}

\begin{proposition}[Mass Distribution Principle]\label{pb:1}
Let $\mu$ be a probability measure supported on a subset $X$ of $\R^m$. Suppose there are positive constants $\alpha$, $C$ and $r_o$ such that
$$\mu(F)\leq Cr(F)^{\alpha}$$
for any ball $F$ with radius $r(F)\leq r_o$. Then if $E$ is a subset of $X$ with $\mu(E)=u>0$, then $\cH^{\alpha}(E)\geq uC^{-1}$, where $\cH^{\alpha}(E)$ denotes the $\alpha$-dimensional Hausdorff measure of $E$. Consequently, $$\dim_{\cH} E\geq \alpha.$$
\end{proposition}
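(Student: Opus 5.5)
The statement to prove is the Mass Distribution Principle (Proposition~\ref{pb:1}). It is a standard covering argument, and the plan is to bound each piece of an arbitrary cover of $E$ directly by the hypothesis on balls.

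\emph{Step 1: fix an arbitrary fine cover.} Fix $\delta\in(0,r_o)$, and let $\{U_j\}_{j\ge1}$ be any countable cover of $E$ by sets with $\operatorname{diam}U_j\le\delta$. Discard any $U_j$ disjoint from $E$; this only decreases the sum $\sum_j(\operatorname{diam}U_j)^\alpha$ and preserves the covering property.

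\emph{Step 2: replace each set by a ball.} For each remaining $j$, pick a point $x_j\in U_j$. Then $U_j$ lies in the closed ball $F_j$ of radius $r_j=\operatorname{diam}U_j$ centred at $x_j$. Since $r_j\le\delta<r_o$, the hypothesis gives
\[
\mu(U_j)\le\mu(F_j)\le C\,r_j^\alpha=C(\operatorname{diam}U_j)^\alpha .
\]
Two details need care here.
\begin{itemize}
\item If $\operatorname{diam}U_j=0$, then $U_j$ is a single point, which lies in balls of arbitrarily small radius, so $\mu(U_j)=0$ and the inequality still holds.
\item If the hypothesis is only for open balls, use the radius $2r_j$ instead. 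This changes only the constant, and the hypothesis is then applied with $r_o$ replaced by $r_o/2$.
\end{itemize}
If $U_j$ is not $\mu$-measurable, replace it by a Borel superset of the same diameter (for example its closure) before applying the bound.

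\emph{Step 3: sum and pass to the limit.} By countable subadditivity,
\[
u=\mu(E)\le\sum_j\mu(U_j)\le C\sum_j(\operatorname{diam}U_j)^\alpha .
\]
Taking the infimum over all such covers gives $\cH^\alpha_\delta(E)\ge u/C$. Letting $\delta\to0$ yields $\cH^\alpha(E)\ge uC^{-1}>0$.

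\emph{Step 4: the dimension bound.} Since $\cH^\alpha(E)>0$, we get $\dim_{\cH}E\ge\alpha$ directly from the definition of Hausdorff dimension. Indeed, $\cH^{\beta}(E)=\infty$ for every $\beta<\alpha$, and the dimension is the critical exponent at which $\cH^\beta(E)$ jumps from $\infty$ to $0$.

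The only point requiring attention is the measurability and open-versus-closed ball bookkeeping in Step~2. It affects only constants and is not a genuine obstacle.
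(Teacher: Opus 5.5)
The paper gives no proof of this proposition (it is quoted as the standard Mass Distribution Principle, as in Falconer). Your covering argument, which encloses each $U_j$ in the closed ball of radius $\mathrm{diam}\,U_j$ about a point of $U_j$ and then sums, is exactly the standard proof; it is correct and yields precisely the stated constant $uC^{-1}$. In your open-ball variant you need not lose the factor $2^\alpha$: since $U_j\subseteq B(x_j,\mathrm{diam}\,U_j+\eta)$ for every $\eta>0$, you get $\mu(U_j)\le C(\mathrm{diam}\,U_j+\eta)^\alpha$, and letting $\eta\to 0$ recovers $\mu(U_j)\le C(\mathrm{diam}\,U_j)^\alpha$.
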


\begin{itemize}[resume]
\item[] By the Mass Distribution Principle, to prove Theorem \ref{tb:2}, it suffices to define a mass distribution/probability measure $\mu$ on $\D_\epsilon(B)$ and verify that $\mu$ satisfies the hypothesis of the proposition with exponent $\alpha = s_{\boldsymbol \rho}-2\epsilon$.
\end{itemize}

\vspace{0.3cm}
We also introduce some of the notations and conventions that will be used throughout the thesis:
\begin{itemize}
\item Unless otherwise stated, the metric used in the article is the metric induced by the standard supremum norm.
\item Suppose $x$ is a real number, then $\lceil x\rceil$ denotes the smallest integer no less than $x$.
\item Suppose $S$ is a set, then $\#S$ denotes the cardinality of $S$. If $S$ is a collection of subsets of $\R^m$, then $\bigcup S$ denotes the union of those elements in $S$.
\item Suppose $E\subseteq R^m$, then $\lambda_m(E)$ denotes the $m$-dimensional Lebesgue measure of $E$. 
\item Suppose $E\subseteq [0,1]^m$, and $\boldsymbol \epsilon = (\epsilon_1,\cdots,\epsilon_m)$ is a nonnegative vector. We define the $\boldsymbol \epsilon$-neighborhood of $S$, denoted $\Delta(E,\boldsymbol \epsilon)$, to be the set $E+\prod_{i=1}^m [-\epsilon_i,+\epsilon_i]$, where the addition is understood in the sense of Minkowski sums. It is then clear that $(x_1,\cdots,x_m) \in \Delta(E,\boldsymbol \epsilon)$ if and only if there exists some $(y_1,\cdots,y_m)\in E$, $|x_i-y_i|\leq \epsilon_i$, $i=1,\cdots,m$. If $E=\{z\}$ is a singleton, then we simply write $\Delta(z,\boldsymbol \epsilon)$ instead of $\Delta(E,\boldsymbol \epsilon)$. If $E=\emptyset$, then any $\boldsymbol \epsilon$-neighborhood of $E$ is set to be the empty set as well.
\item Suppose $\boldsymbol \xi=(\xi_1,\cdots,\xi_m)\in \R^m$, then we shall denote $q^{\boldsymbol \xi}$ to be the vector $(q^{\xi_1},\cdots,q^{\xi_m})$.
\item Suppose $R =\Delta \left(\x,\boldsymbol \epsilon\right)$, then $R$ is a rectangle in $\R^m$. $\x$ is said to be the center of the rectangle, and the components of $\boldsymbol \epsilon$ are called the side lengths of the rectangle. For any $a>0$, we shall denote $aR = \Delta \left(\x,a\boldsymbol \epsilon \right)$.  If $\x$ happens to be a rational point, i.e., $\x=\left(\frac{p_1}{q},\cdots,\frac{p_m}{q}\right)$, then we use $d(R)$ to denote the denominator of the center of $R$ which is just $q$.
\end{itemize}

\vspace{0.5cm}
\section{Step 1: The Set $\cC^{\boldsymbol \tau}(N)$}
The first step in the proof of Theorem~\ref{tb:2} is to construct a set $\cC^{\boldsymbol\tau}(N)\subseteq [0,1]^m$ with the property that, for every rational point \(\frac{\mathbf p}{q}\in [0,1]^m\cap \Q^m\), the set \(\cC^{\boldsymbol\tau}(N)\) avoids a suitable neighbourhood of \(\frac{\mathbf p}{q}\). The construction of \(\cC^{\boldsymbol\tau}(N)\) relies on the following Simplex Lemma, whose proof may be found in \cite{KristensenThornVelani2006}.
\begin{lemma}[Simplex Lemma]\label{lb:1}
Let $m\geq 1$ be an integer and $Q\in \N$. Let $E\subseteq \R^m$ be a convex set that satisfies
$$\lambda_m(E) \leq \frac{1}{m!} \cdot Q^{-m-1}$$
Suppose $E$ contains $m+1$ rational points with denominator $1\leq q \leq Q$. Then these rational points lie on some hyperplane of $\R^m$ intersected with $E$.
\end{lemma}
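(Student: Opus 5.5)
The plan is the standard volume (determinant) argument: if the rational points did not lie on a common hyperplane, they would span a non-degenerate simplex inside \(E\), and the integrality of its vertices would force that simplex to have too much volume. Since the lemma is only quoted from \cite{KristensenThornVelani2006}, I would just reproduce this short argument.

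Suppose, for contradiction, that \(E\) contains rational points \(\frac{\mathbf p_0}{q_0},\dots,\frac{\mathbf p_m}{q_m}\) with \(1\le q_j\le Q\) that do not lie on any common hyperplane. Then they are affinely independent and span a non-degenerate simplex \(\Sigma\). Because \(E\) is convex and contains the vertices, \(\Sigma\subseteq E\), so \(\lambda_m(\Sigma)\le\lambda_m(E)\).

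The key computation is a lower bound for \(\lambda_m(\Sigma)\). The volume of a simplex is \(\frac{1}{m!}\) times the absolute value of the \((m+1)\times(m+1)\) determinant whose \(j\)-th row is \(\bigl(1,\frac{p_{j,1}}{q_j},\dots,\frac{p_{j,m}}{q_j}\bigr)\). Multiplying the \(j\)-th row by \(q_j\) gives
\[
\lambda_m(\Sigma)=\frac{1}{m!}\cdot\frac{|\det M|}{q_0q_1\cdots q_m},
\]
where \(M\) has integer rows \((q_j,p_{j,1},\dots,p_{j,m})\). Affine independence means \(\det M\neq 0\), and since \(\det M\) is an integer, \(|\det M|\ge 1\). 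Together with \(q_j\le Q\) this yields
\[
\lambda_m(\Sigma)\ge \frac{1}{m!}\,Q^{-m-1}\ge \lambda_m(E).
\]

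The only delicate step is that this contradicts the hypothesis only when the inequality is strict, and as written the hypothesis allows equality. Equality is genuinely possible: for \(m=1\), \(Q=1\), \(E=[0,1]\), the points \(0/1\) and \(1/1\) are affinely independent. So I would state and use the lemma with the strict hypothesis \(\lambda_m(E)<\frac{1}{m!}Q^{-m-1}\), which is the form in \cite{KristensenThornVelani2006}.

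With the strict hypothesis, \(\lambda_m(\Sigma)>\lambda_m(E)\) contradicts \(\Sigma\subseteq E\), so the \(m+1\) points must lie on a common hyperplane. In the later construction of \(\cC^{\boldsymbol\tau}(N)\) the convex sets will be chosen with a fixed small constant factor, so this strict version suffices.
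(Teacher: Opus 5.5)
Your proof is correct and is the standard determinant argument of \cite{KristensenThornVelani2006}, which the paper quotes without proof. You are also right that the lemma as written, with non-strict inequalities both in $\lambda_m(E)\le\frac{1}{m!}Q^{-m-1}$ and in $q\le Q$, fails in the equality case. It fails only for $Q=1$, though: equality in $\lambda_m(\Sigma)\ge\frac{1}{m!}Q^{-m-1}$ would need every $q_j=Q$ and $|\det M|=1$, whereas $q_0=\cdots=q_m=Q$ makes $Q$ divide $\det M$.

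One small correction on attribution: as I recall, the cited source keeps the non-strict volume bound and instead puts the strictness on the denominators, $1\le q<Q$. Either repair suffices for the paper's application, where $\lambda_m(\cR)<\frac{1}{m!}N^{-n(m+1)}$ and $q<N^n$.
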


\medskip
\noindent\textbf{Construction of \(\cC^{\boldsymbol\tau}(N)\).}
\begin{enumerate}
\item Fix $\boldsymbol \rho=(\rho_1,\cdots,\rho_m) \in \Q_{\geq 1}^m$ such that 
	$$\sum\limits_{i=1}^m \rho_i = m+1,\quad \rho_1\geq \rho_2\geq \cdots \geq \rho_m,\quad \text{and}\quad 1+\tau_i-\rho_i>0,\,\,\text{for}\,\,i=1,\cdots,m.$$
We define $\cC^{\boldsymbol \tau}(N)$ for any $N\in \N$ such that $N^{\rho_i}$ is an integer for any $i\in \{1,\cdots,m\}$. Since $\boldsymbol \rho\in \Q_{\geq 1}^m$, there are infinitely many such choices of $N$.
\item
Fix an arbitrary integer \(t>\min\{2,m\}\) such that $t^{\rho_i}\in \N$ for any $i\in \{1,\cdots,m\}$. Note that \(t^{m+1}= m^{m+1}>m!\). For $\mathbf{n}= (n_1,\cdots,n_m) \in \N_{\geq 1}^m$, let \(\V_{\mathbf{n}}\) denote the collection of rectangles obtained by partitioning the unit cube $[0,1]^m$ into rectangles of side lengths
\[
t^{-\rho_1}N^{-n_1\rho_1},\,\,t^{-\rho_2}N^{-n_2\rho_2},\,\,\cdots,\,\,
t^{-\rho_m}N^{-n_m\rho_m},
\]
respectively. Equivalently, \(\V_{\mathbf{n}}\) is obtained by dividing the \(i\)-th coordinate interval into \(t^{\rho_i}N^{n_i\rho_i}\) equal subintervals for \(i=1,\cdots,m\).

If $\mathbf{n},\mathbf{n}'\in \N_{\geq 1}^m$ is such that $\mathbf{n}'\preceq \mathbf{n}$, i.e., $n'_i \leq n_i$ for all $i\in \{1,\cdots,m\}$, since $t^{\rho_i}$'s and $N^{\rho_i}$'s are integers, we see that $\V_{\mathbf{n}}$ is a refinement of $\V_{\mathbf{n}'}$. This is to say that for any $\cR\in \V_{\mathbf{n}}$, there exists a unique rectangle $\widetilde{\cR}\in \V_{\mathbf{n}'}$ such that $\cR\subseteq \widetilde{\cR}$. For notational simplicity, if \(\widetilde{\cR}\in \V_{\mathbf{n}'}\), we denote
\[
\V_{\mathbf{n}}(\widetilde{\cR})
:=
\{\cR\in \V_{\mathbf{n}}:\cR\subseteq \widetilde{\cR}\}.
\]
\item
For \(n\geq 1\), we have
\[
\#\V_{(n,\cdots,n)} = \prod\limits_{i=1}^m t^{\rho_i} N^{n\rho_i} = t^{m+1}N^{n(m+1)} \geq m!(N^n)^{m+1}
\]
Hence, for every rectangle \(\cR\in \V_{(n,\cdots,n)}\),
\[
\lambda_m(\cR)=t^{-m-1}N^{-n(m+1)}<\frac{1}{m!}(N^n)^{-m-1}.
\]
By Lemma~\ref{lb:1}, all rational points \(\frac{\mathbf p}{q}\in \cR\cap \Q^m\) with \(1\leq q<N^n\) lie on a $(m-1)-$dimensional hyperplane intersected with \(\cR\). If no such rational point exists, set \(L_{\cR}=\emptyset\). Otherwise, let \(L_{\cR}\) be the intersection of \(\cR\) with any $(m-1)$-dimensional hyperplane containing all such rational points. Define
\[
\cL_n:=\{L_{\cR}:\cR\in \V_{(n,\cdots,n)}\}.
\]
By abuse of terminology, we shall also refer to the elements of \(\cL_n\) as hyperplanes. Observe that if \(\frac{\mathbf p}{q}\in \Q^m\cap [0,1]^m\) and \(N^{n-1}\leq q<N^n\), then there exists some \(L\in \cL_n\) such that \(\frac{\mathbf p}{q}\in L\).

\item
Define \(\boldsymbol c(N)=(c_1(N),\cdots,c_m(N))\) by
\[
c_i(N):=t^{-\rho_i}N^{-3(1+\tau_i)},
\qquad i=1,\cdots,m.
\]
For \(n\geq 1\), define $\boldsymbol{\ell}(n)=(\ell_1(n),\cdots,\ell_m(n))$ by
\[
\ell_i(n):=\left\lceil (n+2)\frac{1+\tau_i}{\rho_i}\right\rceil,
\qquad i=1,\cdots,m.
\]
and define \(\boldsymbol\delta(n)=(\delta_1(n),\cdots,\delta_m(n))\) by
\[
\delta_i(n):=t^{-\rho_i}N^{-(n+2)(1+\tau_i)},
\qquad i=1,\cdots,m.
\]

\item
We now remove neighbourhoods of the hyperplanes iteratively. First, remove all rectangles in \(\V_{\boldsymbol{\ell}(1)}\) that intersect
\[
\bigcup_{L\in \cL^*_1}\Delta(L,\boldsymbol\delta(1)), \quad\text{where}\quad \cL^*_1=  \cL_1
\]
and let \(\cS_1\) denote the collection of surviving rectangles. More precisely,
\[
\cS_1
:=
\left\{
\cR\in \V_{\boldsymbol{\ell}(1)}
:
\cR\cap \bigcup_{L\in \cL^*_{1}}\Delta(L,\boldsymbol\delta(1))=\emptyset
\right\}.
\]
For \(n\geq 2\), suppose that \(\cS_j\) has been constructed for \(j=1,\dots,n-1\). Define
\begin{equation}\label{eqb:3}
\cL_n^*
:=
\left\{
L\in \cL_n
:
\exists \frac{\mathbf p}{q}\in L
\text{ such that }
\begin{cases}
\mathrm{(i)} & N^{n-1}\leq q<N^n,\\[0.3em]
\mathrm{(ii)} & \Delta\!\left(\frac{\mathbf p}{q},\boldsymbol\delta(n)\right)
\cap
\displaystyle\bigcup_{\cR\in \cS_{n-1}}\cR
\neq \emptyset
\end{cases}
\right\}.
\end{equation}
From $\cS_{n-1}$, remove all rectangles in \(\V_{\boldsymbol{\ell}(n)}\) that intersect the \(\boldsymbol\delta(n)\)-neighbourhood of some \(L\in \cL_n^*\). That is, define
\[
\cS_n
:=
\left\{
\cR\in \V_{\boldsymbol{\ell}(n)}
:
\cR\cap \bigcup_{L\in \cL_n^*}\Delta(L,\boldsymbol\delta(n))=\emptyset,
\quad
\exists \cR'\in \cS_{n-1}\text{ such that }\cR\subseteq \cR'
\right\}.
\]

\item
Finally, define \(\cC^{\boldsymbol\tau}(N)\) by
\[
\cC^{\boldsymbol\tau}(N)
:=
\bigcap_{n=1}^\infty \bigcup_{\cR\in \cS_n}\cR.
\]
This completes the construction of \(\cC^{\boldsymbol\tau}(N)\).
\end{enumerate}

\vspace{0.3cm}
We prove that \(\cC^{\boldsymbol\tau}(N)\) has the desired avoidance property.
\begin{lemma}\label{lb:2}
For any
\[
\frac{\mathbf p}{q}=\left(\frac{p_1}{q},\cdots,\frac{p_m}{q}\right)\in \Q^m\cap [0,1]^m,
\]
one has
\[
\cC^{\boldsymbol\tau}(N)\cap
\Delta\!\left(\frac{\mathbf p}{q},\,\mathbf c(N)\,q^{-1-\boldsymbol\tau}\right)
=\emptyset.
\]
\end{lemma}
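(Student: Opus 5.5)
Fix a rational point $\frac{\mathbf p}{q}\in\Q^m\cap[0,1]^m$, written in the form used in the construction, and choose the unique $n\ge 1$ with $N^{n-1}\le q<N^n$. The plan is to show that the neighbourhood $\Delta(\frac{\mathbf p}{q},\mathbf c(N)q^{-1-\boldsymbol\tau})$ is contained in $\Delta(\frac{\mathbf p}{q},\boldsymbol\delta(n))$, and that this larger box is disjoint from $\bigcup_{\cR\in\cS_n}\cR$. Since $\cC^{\boldsymbol\tau}(N)\subseteq\bigcup_{\cR\in\cS_n}\cR$, the lemma then follows.

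\textbf{Step 1: the containment.} For each $i$ we compare side lengths. Since $q\ge N^{n-1}$, we have $q^{-1-\tau_i}\le N^{-(n-1)(1+\tau_i)}$. Hence
\[
c_i(N)q^{-1-\tau_i}\le t^{-\rho_i}N^{-3(1+\tau_i)}N^{-(n-1)(1+\tau_i)}=t^{-\rho_i}N^{-(n+2)(1+\tau_i)}=\delta_i(n).
\]
This is exactly why the exponent $3$ appears in $c_i(N)$.

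\textbf{Step 2: the disjointness, split into two cases.} By item (3) of the construction, $\frac{\mathbf p}{q}$ lies in some $L\in\cL_n$.
\begin{itemize}
\item \emph{Case (a):} $\Delta(\frac{\mathbf p}{q},\boldsymbol\delta(n))\cap\bigcup_{\cR\in\cS_{n-1}}\cR\neq\emptyset$, with $n\ge2$. Then conditions (i) and (ii) of \eqref{eqb:3} hold for this point, so $L\in\cL_n^*$. Every $\cR\in\cS_n$ is by definition disjoint from $\Delta(L,\boldsymbol\delta(n))$. This set contains $\Delta(\frac{\mathbf p}{q},\boldsymbol\delta(n))$, because $\frac{\mathbf p}{q}\in L$ and Minkowski sums are monotone. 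For $n=1$ we have $\cL_1^*=\cL_1$, so the same conclusion holds without checking (ii).
\item \emph{Case (b):} the box $\Delta(\frac{\mathbf p}{q},\boldsymbol\delta(n))$ misses every $\cR\in\cS_{n-1}$. Every $\cR\in\cS_n$ is contained in some $\cR'\in\cS_{n-1}$, so the box misses every $\cR\in\cS_n$ as well.
\end{itemize}

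\textbf{Main obstacle.} The delicate point is the membership claim in item (3): that a rational with $N^{n-1}\le q<N^n$ lies in a hyperplane $L_\cR$ belonging to $\cL_n$. The simplex lemma applies to all rationals in $\cR$ with $q<N^n$, and each such point lies in some $\cR\in\V_{(n,\dots,n)}$. However, $L_\cR$ is defined as a hyperplane intersected with $\cR$. I would check that this intersection still contains $\frac{\mathbf p}{q}$, which is immediate since the point lies in $\cR$.

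A second issue is points on the common boundaries of adjacent rectangles. Here I would simply choose any one rectangle containing the point; the argument only needs one $L$ through $\frac{\mathbf p}{q}$.

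Finally, I would remark that only $\cS_n$ for this particular $n$ is used, so no induction over levels is needed. Combining Steps 1 and 2 gives the claimed empty intersection.
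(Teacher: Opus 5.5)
Your proof is correct and follows the paper's argument. It uses the same comparison $c_i(N)q^{-1-\tau_i}\le\delta_i(n)$, followed by a two-case analysis showing that $\Delta(\frac{\mathbf p}{q},\boldsymbol\delta(n))$ misses $\bigcup_{\cR\in\cS_n}\cR$. Your case split on condition (ii) for the point $\frac{\mathbf p}{q}$ itself is slightly cleaner than the paper's split on whether $\widetilde L\in\cL_n^*$: the paper's second case asserts that all of $\Delta(\widetilde L,\boldsymbol\delta(n))$ misses $\bigcup_{\cR\in\cS_{n-1}}\cR$, whereas the definition of $\cL_n^*$ only guarantees this for the $\boldsymbol\delta(n)$-boxes around rationals of $\widetilde L$ with denominator in $[N^{n-1},N^n)$, which is exactly what you use.
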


\begin{proof}
Observe that \(\cC^{\boldsymbol\tau}(N)\) is defined as the intersection of a nested sequence of sets, and at each stage of the construction we ensure that
\[
\bigcup_{\cR\in \cS_n}\cR
\;\cap\;
\bigcup_{L\in \cL_n^*}\Delta(L,\boldsymbol\delta(n))
=\emptyset.
\]
Consequently,
\begin{equation}\label{eqb:1}
\cC^{\boldsymbol\tau}(N)
\cap
\bigcup_{n=1}^\infty \bigcup_{L\in \cL_n^*}\Delta(L,\boldsymbol\delta(n))
=
\emptyset.
\end{equation}

Now let
\[
\frac{\mathbf p}{q}\in \Q^m\cap [0,1]^m
\qquad\text{with}\qquad
N^{n-1}\leq q<N^n
\]
for some \(n\in \N\). Then, for \(i\in\{1,\cdots,m\}\),
\begin{equation}\label{eqb:2}
c_i(N)q^{-1-\tau_i}
\leq
c_i(N)N^{-(n-1)(1+\tau_i)}
=
t^{-\rho_i}N^{-(n+2)(1+\tau_i)}
=
\delta_i(n).
\end{equation}
By construction, there exists a hyperplane \(\widetilde L\in \cL_n\) containing \(\frac{\mathbf p}{q}\).

If \(\widetilde L\in \cL_n^*\), then by \eqref{eqb:1} and \eqref{eqb:2},
\[
\Delta\!\left(\frac{\mathbf p}{q},\,\mathbf c(N)\,q^{-1- \boldsymbol \tau}\right)
\subseteq
\Delta(\widetilde L,\boldsymbol\delta(n)),
\]
and hence
\[
\Delta\!\left(\frac{\mathbf p}{q},\,\mathbf c(N)\,q^{-1- \boldsymbol \tau}\right)
\cap
\cC^{\boldsymbol\tau}(N)
=
\emptyset.
\]

Suppose instead that \(\widetilde L\notin \cL_n^*\). Then, $n>1$ and by the definition of \(\cL_n^*\) in \eqref{eqb:3},
\[
\Delta(\widetilde L,\boldsymbol\delta(n))
\cap
\bigcup_{\cR\in \cS_{n-1}}\cR
=
\emptyset.
\]
Since \(\cC^{\boldsymbol\tau}(N)\subseteq \bigcup_{\cR\in \cS_{n-1}}\cR\), it follows that
\[
\Delta\!\left(\frac{\mathbf p}{q},\,\mathbf c(N)\,q^{-1-\boldsymbol\tau}\right)
\cap
\cC^{\boldsymbol\tau}(N)
\subseteq
\Delta(\widetilde L,\boldsymbol\delta(n))
\cap
\bigcup_{\cR\in \cS_{n-1}}\cR
=
\emptyset.
\]
This completes the proof.
\end{proof}

\medskip
Recall that the Cantor set \(\D_\epsilon(B)\) we wish to construct will be contained in
\[
B\cap \cC^{\boldsymbol\tau}(N)\cap \cA_2(\Psi_{\boldsymbol\tau}).
\]
It is therefore important to ensure that, for an arbitrary ball \(B\subseteq [0,1]^m\), the intersection
\[
B\cap \cC^{\boldsymbol\tau}(N)
\]
is non-empty for at least some values of \(N\). The existence of such \(N\) is guaranteed by the following lemma.
\begin{lemma}\label{lb:3}
For any \(\kappa>0\), there exists \(N_\kappa\in \N\) such that, whenever \(N\geq N_\kappa\),
\[
\lambda_m\bigl(\cC^{\boldsymbol\tau}(N)\bigr)\geq 1-\kappa.
\]
\end{lemma}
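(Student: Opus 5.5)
We bound the Lebesgue measure of what is removed. At each stage $n$, the rectangles discarded from $\bigcup\cS_{n-1}$ are those in $\V_{\boldsymbol\ell(n)}$ that meet $\Delta(L,\boldsymbol\delta(n))$ for some $L\in\cL_n^*\subseteq\cL_n$. Hence
\[
1-\lambda_m\bigl(\cC^{\boldsymbol\tau}(N)\bigr)\le \sum_{n\ge1}\ \sum_{L\in\cL_n}\lambda_m\Bigl(\Delta\bigl(\Delta(L,\boldsymbol\delta(n)),\,\boldsymbol\eta(n)\bigr)\Bigr),
\]
where $\eta_i(n)=t^{-\rho_i}N^{-\ell_i(n)\rho_i}$ is the side length of the rectangles in $\V_{\boldsymbol\ell(n)}$. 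This uses that $\cC^{\boldsymbol\tau}(N)$ is a decreasing intersection, so its complement is the union of all removed pieces. By the choice $\ell_i(n)=\lceil (n+2)(1+\tau_i)/\rho_i\rceil$ we have $\eta_i(n)\le t^{-\rho_i}N^{-(n+2)(1+\tau_i)}=\delta_i(n)$. Each summand is therefore at most $\lambda_m\bigl(\Delta(L,2\boldsymbol\delta(n))\bigr)$, which avoids any boundary effects from the grid.

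Next we estimate the volume of $\Delta(L,2\boldsymbol\delta(n))$, where $L=L_\cR$ is a hyperplane section of a rectangle $\cR\in\V_{(n,\dots,n)}$. Such an $\cR$ has side lengths $a_i=t^{-\rho_i}N^{-n\rho_i}$. The key geometric claim is that the $\boldsymbol\delta$-neighbourhood of any hyperplane section of a box with sides $a_i$ has volume at most $C_m\sum_{j}\delta_j\prod_{i\ne j}a_i$. To see this, write the hyperplane as $\sum_i \alpha_i x_i=\beta$ and choose $j$ maximising $|\alpha_j|a_j$. The neighbourhood is then contained in the set of points whose $x_j$-coordinate lies within $O(\sum_i|\alpha_i|\delta_i/|\alpha_j|)$ of the graph over the other coordinates, intersected with the $2$-enlarged box. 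A more symmetric route is to pass to coordinates $y_i=x_i/a_i$, which turns the box into a unit cube, and note that the neighbourhood becomes a $\boldsymbol\delta/\boldsymbol a$-neighbourhood there. Either way we get the bound
\[
\lambda_m\bigl(\Delta(L,2\boldsymbol\delta(n))\bigr)\le C_m\,\lambda_m(\cR)\sum_{j=1}^m\frac{\delta_j(n)}{a_j}
= C_m\,\lambda_m(\cR)\sum_j N^{n\rho_j-(n+2)(1+\tau_j)}.
\]
I expect this volume estimate for neighbourhoods of oblique hyperplanes with anisotropic thickening to be the main (though elementary) obstacle. One must check that the width in direction $j$ is controlled by $\max_i \delta_i/a_i$ relative to the box, and not merely by $\sum_i|\alpha_i|\delta_i/|\alpha_j|$ without normalisation. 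The rescaling to a unit cube handles this cleanly, with a constant $C_m$ depending only on $m$.

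Finally, we sum the estimate. Since the $\cR$ partition $[0,1]^m$, summing over $\cR\in\V_{(n,\dots,n)}$ gives a stage-$n$ loss of at most $C_m\sum_j N^{n\rho_j-(n+2)(1+\tau_j)}$. Because $1+\tau_j-\rho_j>0$, the exponent is at most $-n\gamma-2(1+\tau_j)$ with $\gamma:=\min_j(1+\tau_j-\rho_j)>0$. The total loss is therefore at most $C_m m\sum_{n\ge1}N^{-n\gamma-2}\le 2C_m m N^{-\gamma-2}$ for $N\ge2^{1/\gamma}$. This tends to $0$ as $N\to\infty$, so it is at most $\kappa$ for all admissible $N\ge N_\kappa$, which proves Lemma \ref{lb:3}.
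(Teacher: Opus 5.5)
Your proof is correct and follows the paper's argument essentially step by step. It uses the same cover of the removed set by $\Delta(L,2\boldsymbol\delta(n))$ via $t^{-\rho_i}N^{-\ell_i(n)\rho_i}\le\delta_i(n)$, and the same slab estimate, where your choice of $j$ maximising $|\alpha_j|a_j$ is exactly the paper's choice of $i_0$ minimising $P_i/|a_i|$. It then sums over the $t^{m+1}N^{n(m+1)}$ boxes of $\V_{(n,\dots,n)}$ in the same way. The differences are only cosmetic: you control the extent of the neighbourhood through $L\subseteq\cR$ instead of intersecting with $5\widetilde{\cR}$, and your maximising choice of $j$ handles vanishing coefficients uniformly, where the paper says only ``treated similarly''.
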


\begin{proof}
From the construction of \(\cC^{\boldsymbol\tau}(N)\), we have
\[
\lambda_m\bigl(\cC^{\boldsymbol\tau}(N)\bigr)
\geq
1-
\lambda_m\left(
\bigcup_{n\in \N}
\bigcup
\left\{
\cR\in \V_{\boldsymbol{\ell}(n)}
:
\cR\cap \bigcup_{L\in \cL_n}\Delta(L,\boldsymbol\delta(n))\neq \emptyset
\right\}
\right).
\]
The rectangles in \(\V_{\boldsymbol{\ell}(n)}\) have side lengths
\[
t^{-\rho_i}N^{-\ell_i(n)\rho_i},
\qquad i=1,\cdots,m.
\]

We claim that if \(\cR\in \V_{\boldsymbol{\ell}(n)}\) intersects \(\Delta(L,\boldsymbol\delta(n))\) for some \(L\in \cL_n\), then
\[
\cR\subseteq \Delta(L,2\boldsymbol\delta(n)).
\]
Indeed, by assumption there exist \(\mathbf x\in \cR\) and \(\mathbf y\in L\) such that
\[
|x_i-y_i|\leq \delta_i(n),\qquad i=1,\cdots,m.
\]
Then for any \(\mathbf z\in \cR\) and \(i\in \{1,\cdots,m\}\),
\begin{align*}
|z_i-y_i|
&\leq |z_i-x_i|+|x_i-y_i| \leq t^{-\rho_i}N^{-\ell_i(n)\rho_i}+\delta_i(n) \\
&=
t^{-\rho_i}N^{-\left\lceil (n+2)\frac{1+\tau_i}{\rho_i}\right\rceil \rho_i}+\delta_i(n) \\
&\leq t^{-\rho_i}N^{-(n+2)(1+\tau_i)}+\delta_i(n)= 2\delta_i(n).
\end{align*}
This proves the claim.

Similarly, using the triangle inequality, one sees that if \(\widetilde{\cR}\in \V_{n,n}\) is such that \(L\subseteq \widetilde{\cR}\), then any rectangle \(\cR\in \V_{\boldsymbol{\ell}(n)}\) that intersects \(\Delta(L,\boldsymbol\delta(n))\) is contained in \(5\widetilde{\cR}\). This follows from the facts that
\[
\delta_i(n)=t^{-\rho_i}N^{-(n+2)(1+\tau_i)}<t^{-\rho_i}N^{-n\rho_i}
\]
and
\[
t^{-\rho_i}N^{-\ell_i(n)\rho_i}<t^{-\rho_i}N^{-n\rho_i},
\qquad i=1,\cdots,m,
\]
while the side lengths of rectangles in \(\V_{(k_1,\cdots,k_m)}\) are \(t^{-\rho_i}N^{-k_i\rho_i}\), $i=1,\cdots,m$.

Consequently, for any \(L\in \cL_n\),
\begin{equation}\label{eq:measure-cover}
\lambda_m\left(
\bigcup
\left\{
\cR\in \V_{\boldsymbol{\ell}(n)}
:
\cR\cap \Delta(L,\boldsymbol\delta(n))\neq \emptyset
\right\}
\right)
\leq
\lambda_m\bigl(\Delta(L,2\boldsymbol\delta(n))\cap 5\widetilde{\cR}\bigr),
\end{equation}
where \(\widetilde{\cR}\in \V_{(n,\cdots,n)}\) is the rectangle corresponding to \(L\).

Next we estimate \(\lambda_m\bigl(\Delta(L,2\boldsymbol\delta(n))\cap 5\widetilde{\cR}\bigr)\). If \(L=\emptyset\), then this quantity is zero. Otherwise, \(L=L_{\widetilde{\cR}}\) is the intersection of a $(m-1)$-dimensional hyperplane \(H_{\widetilde{\cR}}\) with the rectangle \(\widetilde{\cR}\), whose side lengths are \(t^{-\rho_i}N^{-n\rho_i}\). Assume that \(H_{\widetilde{\cR}}\) is determined by $a_1x_1+a_2x_2+\cdots+a_mx_m+b=0$, $a_1,\cdots,a_m\neq 0$. The cases where at least one of the $a_i$'s is zero is treated similarly. Then
\begin{align*}
\Delta(L_{\widetilde{\cR}},2\boldsymbol\delta(n))\cap 5\widetilde{\cR}
&\subseteq
\Delta(H_{\widetilde{\cR}},2\boldsymbol\delta(n))\cap 5\widetilde{\cR} \\
&\subseteq
\left\{
(x_1,\cdots,x_m)\in 5\widetilde{\cR}
:
|a_1x_1+\cdots+a_mx_m+b|\leq 2\sum\limits_{j=1}^m |a_j|\delta_j(n)
\right\}.
\end{align*}

We now estimate the Lebesgue measure of the latter set. If all coordinates except the $i$th coordinate $(x_i)$ are fixed, then the admissible values of $x_i$ lie in an interval of length at most  
    $$\frac{4}{|a_i|} \sum\limits_{k=1}^m |a_k|\delta_k(n).$$
While $(x_1,\cdots,x_m)$ need to lie in $5\widetilde{\cR}$ which has side lengths $5t^{-\rho_i}N^{-n\rho_i}$, then
\begin{align*}
\lambda_m\bigl(\Delta(L_{\widetilde{\cR}},2\boldsymbol\delta(n))\cap 5\widetilde{\cR}\bigr)
&\leq
\min\limits_{1\leq i \leq m} \left\{ \frac{4}{|a_i|} \left(\sum\limits_{k=1}^m |a_k|\delta_k(n)\right) \left(\prod\limits_{j\neq i} 5t^{-\rho_j}N^{-n\rho_j}\right)\right\}
\end{align*}
We choose \(i_0\) such that
\[
\frac{P_{i_0}}{|a_{i_0}|}
=\min_{1\le i\le m}\frac{P_i}{|a_i|}, \quad \text{where}\quad P_i = \prod\limits_{j\neq i} t^{-\rho_j} N^{-n\rho_j}.
\]
Then for every \(k\in \{1,\cdots,m\}\),
\[
\frac{P_{i_0}}{|a_{i_0}|}\le \frac{P_k}{|a_k|}.
\]
Since \(\delta_k(n)\ge 0\), multiplying by \(|a_k|\delta_k(n)\) and summing over \(k\) gives
\[
\frac{P_{i_0}}{|a_{i_0}|}\sum_{k=1}^m |a_k|\delta_k(n)
\le \sum_{k=1}^m \frac{P_k}{|a_k|}|a_k|\delta_k(n)
= \sum_{k=1}^m \delta_k(n)P_k.
\]
Therefore
\[
\min_{1\le i\le m}\left\{\frac{4}{|a_i|}\sum_{k=1}^m |a_k|\delta_k(n)\prod_{j\ne i}5t^{-\rho_j}N^{-n\rho_j}\right\}
\le 4\cdot 5^{m-1}\sum_{k=1}^m \left(\delta_k(n)\prod_{j\ne k} t^{-\rho_j}N^{-n\rho_j}\right).
\]
Let
\[
\varepsilon:=\min_{1\leq i \leq m}\{1+\tau_i-\rho_i,\,1+\tau_i\}>0.
\]
Then
\begin{align*}
\lambda_m\bigl(\Delta(L_{\widetilde{\cR}},2\boldsymbol\delta(n))\cap 5\widetilde{\cR}\bigr)&\leq
4\cdot 5^{m-1}\sum_{k=1}^m \left(\delta_k(n)\prod_{j\ne k} t^{-\rho_j}N^{-n\rho_j}\right) \\
&=4\cdot 5^{m-1}\sum_{k=1}^m \left(t^{-\rho_k} N^{-(n+2)(1+\tau_k)}\prod_{j\ne k} t^{-\rho_j}N^{-n\rho_j}\right)\\
&\leq 
4m \cdot 5^{m-1} \cdot t^{-m-1}\cdot N^{-n(m+1)} (N^{-\varepsilon})^n
\end{align*}

Finally, \(\#\cL_n=\#\V_{(n,\cdots,n)}=t^{m+1}N^{n(m+1)}\). Hence,
\begin{align*}
&\quad \lambda_m\left(
\bigcup_{n\in \N}
\bigcup
\left\{
\cR\in \V_{\boldsymbol{\ell}(n)}
:
\cR\cap \bigcup_{L\in \cL_n}\Delta(L,\boldsymbol\delta(n))\neq \emptyset
\right\}
\right)\\&\leq
\sum_{n=1}^\infty
\sum_{\widetilde{\cR}\in \V_{(n,\cdots,n)}}
\lambda_m\bigl(\Delta(L_{\widetilde{\cR}},2\boldsymbol\delta(n))\cap 5\widetilde{\cR}\bigr) \\
&\leq
\sum_{n=1}^\infty
\bigl(t^{m+1}N^{n(m+1)}\bigr)\cdot \bigl(4m \cdot 5^{m-1} \cdot t^{-m-1}\cdot N^{-n(m+1)} (N^{-\varepsilon})^n\bigr) \\
&=4m\cdot 5^{m-1}\cdot\sum_{n=1}^\infty (N^{-\varepsilon})^n.
\end{align*}
Since \(N^{-\varepsilon}\to 0\) as \(N\to \infty\), we may choose \(N_\kappa\) sufficiently large that
\[
4m\cdot 5^{m-1}\cdot \sum_{n=1}^\infty (N^{-\varepsilon})^n<\kappa
\qquad\text{for all }N\geq N_\kappa.
\]
Therefore,
\[
\lambda_m\bigl(\cC^{\boldsymbol\tau}(N)\bigr)\geq 1-\kappa
\qquad\text{whenever }N\geq N_\kappa.
\]
This completes the proof.
\end{proof}

\medskip
Next we establish the local statement for the mass removed during the construction of $\cC^{\boldsymbol \tau}(N)$.
\begin{lemma}\label{lb:4}
Suppose \(n\ge 1\) and \(\cR\in \cS_n\). For any \(\kappa>0\), there exists \(N_\kappa\in \N\) such that,
whenever \(N\ge N_\kappa\),
\[
\lambda_m\bigl(\cR\cap \cC^{\boldsymbol\tau}(N)\bigr)\ge (1-\kappa)\lambda_m(\cR).
\]
\end{lemma}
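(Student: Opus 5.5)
We localise the proof of Lemma~\ref{lb:3} to the rectangle \(\cR\in\cS_n\). Since \(\cR\in\cS_n\) survives to stage \(n\), and the later stages only remove sub-rectangles,
\[
\lambda_m\bigl(\cR\cap\cC^{\boldsymbol\tau}(N)\bigr)\ \ge\ \lambda_m(\cR)-\sum_{k>n}\lambda_m\Bigl(\cR\cap\bigcup\bigl\{\cR'\in\V_{\boldsymbol\ell(k)}:\cR'\cap\textstyle\bigcup_{L\in\cL_k}\Delta(L,\boldsymbol\delta(k))\neq\emptyset\bigr\}\Bigr).
\]
So it suffices to show that the \(k\)-th term is at most \(C N^{-\varepsilon (k-n)}\lambda_m(\cR)\) (or a similar summable bound), with \(C\) depending only on \(m\) and \(t\). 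Summing over \(k>n\) then gives at most \(C'\sum_{j\ge1}N^{-\varepsilon j}\lambda_m(\cR)\), which is below \(\kappa\lambda_m(\cR)\) for \(N\ge N_\kappa\), uniformly in \(n\) and \(\cR\).

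To bound the \(k\)-th term, we reuse the two ingredients from the proof of Lemma~\ref{lb:3}. First, any removed \(\cR'\in\V_{\boldsymbol\ell(k)}\) touching \(\Delta(L,\boldsymbol\delta(k))\) with \(L=L_{\widetilde\cR}\), \(\widetilde\cR\in\V_{(k,\dots,k)}\), lies in \(\Delta(L,2\boldsymbol\delta(k))\cap 5\widetilde\cR\). Second, each such piece has measure at most \(4m\,5^{m-1}t^{-m-1}N^{-k(m+1)}N^{-k\varepsilon}\). It remains to count the rectangles \(\widetilde\cR\in\V_{(k,\dots,k)}\) whose \(5\widetilde\cR\) meets \(\cR\).

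Since \(\cR\in\V_{\boldsymbol\ell(n)}\) has side lengths \(t^{-\rho_i}N^{-\ell_i(n)\rho_i}\), and \(\ell_i(n)\rho_i\approx(n+2)(1+\tau_i)\), the comparison of scales matters. When \(k\ge\max_i\ell_i(n)\), the grid \(\V_{(k,\dots,k)}\) refines \(\cR\), so the number of relevant \(\widetilde\cR\) is at most \(\lambda_m(5\cR)/\lambda_m(\widetilde\cR)\lesssim 5^m\lambda_m(\cR)\,t^{m+1}N^{k(m+1)}\). This gives the bound \(\lesssim\lambda_m(\cR)N^{-k\varepsilon}\), which is even better than required.

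The main obstacle is the intermediate range \(n<k<\max_i\ell_i(n)\), where some sides of \(\widetilde\cR\) are larger than the corresponding sides of \(\cR\), so the counting argument above fails. In this range I would instead bound \(\lambda_m(\cR\cap\Delta(H,2\boldsymbol\delta(k)))\) directly for each hyperplane. Following the slicing computation of Lemma~\ref{lb:3} but with \(\cR\) in place of \(5\widetilde\cR\), this slab-type estimate gives at most \(4\sum_j\delta_j(k)\prod_{i\ne j}(\text{side}_i\,\cR)\). The number of relevant \(\widetilde\cR\) is then only \(O(\prod_i\max\{1,\text{side}_i\cR/\text{side}_i\widetilde\cR\})\).

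The step I expect to be delicate is that for \(k\) close to \(n\), \(\delta_j(k)\) may be comparable to the side of \(\cR\) in direction \(j\). This is exactly where the definition of \(\cL_k^*\) should be used. I would first try to show that for \(k\le n+2\) the sides of \(\cR\) are at most of order \(\delta_j(k)\), so that only \(O(1)\) hyperplanes matter. Then \(\lambda_m(\cR\cap\Delta(H,2\boldsymbol\delta(k)))\le 4\sum_j(\delta_j(k)/\text{side}_j\cR)\lambda_m(\cR)\). The ratio \(\delta_j(k)/\text{side}_j\cR\approx N^{(n+2-k)(1+\tau_j)}\) is small once \(k\ge n+3\), giving a geometric bound \(N^{-(k-n-2)\varepsilon}\lambda_m(\cR)\). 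For the finitely many problematic \(k\in\{n+1,n+2\}\), I would argue that rationals with \(q<N^{n+2}\) near \(\cR\) lie on a single hyperplane by Lemma~\ref{lb:1}, and \(\cR\) being in \(\cS_n\) keeps \(\cR\) at distance \(\ge\delta(n)\) from it. I would then tune \(N\) large, possibly replacing the \(+2\) shift's slack by the factor \(N^{-\varepsilon}\), to make those contributions \(\le\kappa\lambda_m(\cR)/2\).
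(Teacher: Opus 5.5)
\paragraph{Where the argument breaks.} Your framework is the paper's. You bound the measure removed inside $\cR$ at each level $k>n$ by a slab of width $2\boldsymbol\delta(k)$ around $H_{\widetilde{\cR}}$, count the relevant $\widetilde{\cR}\in\V_{(k,\dots,k)}$, and sum a geometric series. Your treatment of $k\ge\max_i\ell_i(n)$ is fine. The gap is in the mixed range $\min_i\ell_i(n)<k<\max_i\ell_i(n)$.

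There you replace $5\widetilde{\cR}$ by $\cR$. Write $s_j$ for the $j$-th side of $\cR$. Every removed piece is then charged a full slab across $\cR$, of measure $\asymp\lambda_m(\cR)\sum_j\delta_j(k)/s_j$. You then multiply by the number of relevant $\widetilde{\cR}$ you use, which is $\asymp\prod_{i:\,k>\ell_i(n)}N^{(k-\ell_i(n))\rho_i}$. This overcounts, since each $\widetilde{\cR}$ only accounts for a small part of that slab, and the gain $\delta_j(k)/s_j$ need not compensate.

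For example, take $m=3$, $\boldsymbol\rho=(\tfrac43,\tfrac43,\tfrac43)$, $\boldsymbol\tau=(100,0.4,0.4)$, $n=1$ (so $\boldsymbol\ell(1)=(228,4,4)$) and $k=200$. Your count is $\asymp N^{1568/3}$, while $\delta_3(200)/s_3=N^{-282.8+16/3}$. So your bound at this single level is of order $N^{245}\lambda_m(\cR)$.

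\paragraph{The missing step.} Keep both constraints at once. The piece attached to $\widetilde{\cR}$ lies in $\Delta(H_{\widetilde{\cR}},2\boldsymbol\delta(k))\cap W$ with $W=\cR\cap 5\widetilde{\cR}$, whose $i$-th side is at most $5t^{-\rho_i}N^{-\max\{\ell_i(n),k\}\rho_i}$. In every direction, (count factor)$\times$(side of $W$) is then the side of $\cR$, and the product telescopes to
\[
\lambda_m(\cR)\sum_{i=1}^m N^{-(k+2)(1+\tau_i)+\max\{\ell_i(n),k\}\rho_i}\le m\,\lambda_m(\cR)\,N^{-(k-n)\varepsilon},\qquad \varepsilon=\min_j(1+\tau_j-\rho_j).
\]
This holds uniformly in $k>n$ with no case split. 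It is exactly the paper's argument, up to the constants $M_1,M_2$.

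\paragraph{The levels $k\in\{n+1,n+2\}$.} These ``delicate'' levels come from an arithmetic slip. Since $\ell_j(n)\rho_j<(n+2)(1+\tau_j)+\rho_j$,
\[
\frac{\delta_j(k)}{s_j}=N^{-(k+2)(1+\tau_j)+\ell_j(n)\rho_j}<N^{-(k-n)(1+\tau_j)+\rho_j}\le N^{-(k-n)(1+\tau_j-\rho_j)}\qquad(k\ge n+1),
\]
and not $\approx N^{(n+2-k)(1+\tau_j)}$. So $\delta_j(k)$ is already much smaller than every side of $\cR$ at $k=n+1$. Since $\min_i\ell_i(n)\ge n+3$, your own slab bound handles all $k\le\min_i\ell_i(n)$ directly.

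This matters because the remedy you propose for those levels would not work. Membership $\cR\in\cS_n$ only says that $\cR$ avoids $\Delta(L,\boldsymbol\delta(j))$ for $L\in\cL_j^*$ with $j\le n$. It gives no information about hyperplanes of levels $n+1$ or $n+2$, which are precisely what those stages remove from $\cR$.
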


\begin{proof}
Fix \(n\ge 1\) and \(\cR\in \cS_n\). Since \(\cR\) survives up to level \(n\), we have
\[
\cR\subseteq \bigcup_{\cR'\in \cS_j}\cR'
\qquad\text{for every }1\le j\le n.
\]
Hence
\[
\lambda_m\!\left(
\cR\cap \bigcap_{j=1}^n \bigcup_{\cR'\in \cS_j}\cR'
\right)=\lambda_m(\cR).
\]
As \(\cC^{\boldsymbol\tau}(N)=\bigcap_{j=1}^\infty \bigcup_{\cR'\in \cS_j}\cR'\), it follows that
\begin{align*}
\lambda_m\bigl(\cR\cap \cC^{\boldsymbol\tau}(N)\bigr)
&\ge
\lambda_m(\cR)
-
\sum_{k=n+1}^\infty
\lambda_m\left(
\bigcup
\left\{
\cR'\in \V_{\boldsymbol{\ell}(k)}(\cR):
\cR'\cap \bigcup_{L\in \cL_k}\Delta(L,\boldsymbol\delta(k))\neq\emptyset
\right\}
\right) \\
&\ge
\lambda_m(\cR)
-
\sum_{k=n+1}^\infty
\sum_{\tilde{\cR}\in \V_{(k,\cdots,k)}}
\lambda_m\left(
\bigcup
\left\{
\cR'\in \V_{\boldsymbol{\ell}(k)}(\cR):
\cR'\cap \Delta(L_{\tilde{\cR}},\boldsymbol\delta(k))\neq\emptyset
\right\}
\right).
\end{align*}

We now estimate the inner sum for each \(k\ge n+1\). Write
\[
E_{\cR,\tilde{\cR},k}
:=
\bigcup
\left\{
\cR'\in \V_{\boldsymbol{\ell}(k)}(\cR):
\cR'\cap \Delta(L_{\tilde{\cR}},\boldsymbol\delta(k))\neq\emptyset
\right\}.
\]
Then $E_{\cR,\tilde{\cR},k} \subseteq \cR$. From the proof of Lemma \ref{lb:3}, we know $E_{\cR,\tilde{\cR},k} \subseteq \Delta( H_{\tilde{\cR}},2\boldsymbol \delta(k))$ and $E_{\cR,\tilde{\cR},k} \subseteq 5\tilde{\cR}$. Then 
    $$E_{\cR,\tilde{\cR},k} \subseteq \Delta(H_{\tilde{\cR}},2\boldsymbol \delta(k)) \cap W$$
where $W$ is a rectangle whose side lengths are bounded by
    $$\min \{t^{-\rho_i}N^{-\ell_i(n)\rho_i},\,5t^{-\rho_i} N^{-k\rho_i}\}\leq 5t^{-\rho_i} N^{-\max\{\ell_i(n),k\} \rho_i},\quad i=1,\cdots,m.$$
Similar to the proof of Lemma \ref{lb:3}, one can show
    \begin{align*}
    \lambda_m(E_{\cR,\tilde{\cR},k}) \leq M_1 \sum \limits_{i=1}^m \delta_i(k) \prod\limits_{j\neq i} t^{-\rho_j} N^{-\max\{\ell_j(n),k\} \rho_j}
    \end{align*}
for some constant $M_1$ only depending on $m$ and independent of $n$, $k$ and $N$.

We also give a bound on the number of $\tilde{\cR} \in \V_{(k,\cdots,k)}$ such that $E_{\cR,\tilde{\cR},k}$ is nonempty. Since $L_{\tilde{\cR}} \subseteq \tilde{\cR}$ which has side lengths $t^{-\rho_i}N^{-k\rho_i}$ and $\cR$ has sidelengths $t^{-\rho_i} N^{-\ell_i(n)\rho_i}\geq \delta_i(k)$ for $k>n$ and $i\in \{1,\cdots,m\}$, then a standard eometric argument gives that the number of $\tilde{\cR}$ such that $E_{\cR,\tilde{\cR},k}$ is nonempty is bounded by
    $$M_2 \prod\limits_{i=1}^m N^{\max\{k-\ell_i(n),0\}\rho_i}.$$
where $M_2$ is a constant depending only on $m$. Therefore,
\begin{align*}
&\quad \sum_{k=n+1}^\infty
\sum_{\tilde{\cR}\in \V_{(k,\cdots,k)}}
\lambda_m\left(
\bigcup
\left\{
\cR'\in \V_{\boldsymbol{\ell}(k)}(\cR):
\cR'\cap \Delta(L_{\tilde{\cR}},\boldsymbol\delta(k))\neq\emptyset
\right\}
\right)\\
&\leq \sum\limits_{k=n+1}^\infty \left(M_2 \prod\limits_{i=1}^m N^{\max\{k-\ell_i(n),0\}\rho_i}\right) \cdot \left(M_1 \sum \limits_{i=1}^m \delta_i(k) \prod\limits_{j\neq i} t^{-\rho_j} N^{-\max\{\ell_j(n),k\} \rho_i}\right)\\
&= M_1M_2t^{-m-1} \sum\limits_{k=n+1}^\infty \sum \limits_{i=1}^m \left( N^{-(k+2)(1+\tau_i)} N^{\max \{\ell_i(n),k\}\rho_i} \prod\limits_{j=1}^m N^{\max \{k-\ell_j(n),0\} \rho_j} N^{-\max \{\ell_j(n),k\} \rho_j} \right)\\
&= M_1M_2 t^{-m-1} N^{-\sum\limits_{i=1}^m \ell_i(n)\rho_i} \sum \limits_{k=n+1}^\infty \sum \limits_{i=1}^m \left( N^{-(k+2)(1+\tau_i)} N^{\max \{\ell_i(n),k\}\rho_i}\right)\\
&= M_1M_2 \lambda_m(\cR) \sum \limits_{k=n+1}^\infty \sum \limits_{i=1}^m \left( N^{-(k+2)(1+\tau_i)} N^{\max \{\ell_i(n),k\}\rho_i}\right)
\end{align*}
Since 
    $$\ell_i(n)\rho_i = \left\lceil (n+2)\frac{1+\tau_i}{\rho_i} \right\rceil \rho_i\leq (n+2)(1+\tau_i)+\rho_i$$
and $k\geq n+1$. Then by letting $\varepsilon = \min \limits_{1\leq i \leq m} \{1+\tau_i-\rho_i,1\}>0$, we see that 
$$\sum \limits_{k=n+1}^\infty \sum \limits_{i=1}^m \left( N^{-(k+2)(1+\tau_i)} N^{\max \{\ell_i(n),k\}\rho_i}\right) \leq \sum\limits_{k=n+1}^\infty \sum \limits_{i=1}^m (N^{-\varepsilon})^{k-n} = m\sum\limits_{k=n+1}^\infty (N^{-\varepsilon})^{k-n}.$$

Finally, let $M_3 = mM_1M_2$, we have
    \begin{align*}
        \lambda_m(\cR\cap \cC^{\boldsymbol \tau}(N))&\geq \lambda_m(\cR)- M_3\lambda_m(\cR) \sum\limits_{k=n+1}^\infty (N^{-\varepsilon})^{k-n}\\
        &=\left(1-M_3\sum\limits_{j=1}^\infty (N^{-\varepsilon})^{j}\right) \lambda_m(\cR)
    \end{align*}
Since
\[
M_3\sum \limits_{j=1}^\infty (N^{-\varepsilon})^j\to 0
\qquad\text{as}\quad N\to\infty,
\]
we may choose \(N_\kappa\) sufficiently large that
\[
M_3\sum \limits_{j=1}^\infty (N^{-\varepsilon})^j<\kappa
\qquad\text{for all}\,\,N\ge N_\kappa.
\]
It follows that
\[
\lambda_m\bigl(\cR\cap \cC^{\boldsymbol\tau}(N)\bigr)\ge (1-\kappa)\lambda_m(\cR),
\]
as required.
\end{proof}

\vspace{0.5cm}
\section{Step 2: The Set of Leading Rationals}
The set of \textbf{leading rationals} $Q(N,\boldsymbol \tau)$ is defined as follows
	\begin{equation*}
	Q(N,\boldsymbol \tau):=\left\{\frac{\p}{q}\in \Q^m\,:\,\exists L\in \mathcal{L}_n^*\,\,\text{for some}\,\,n,\,\,\text{s.t.}\,\frac{\p}{q}\in L\,\,\text{and}\,\,\begin{cases}\mathrm{i})\,\,N^{n-1}\leq q<N^n\\\mathrm{ii})\,\Delta\left(\frac{\p}{q},\boldsymbol \delta(n)\right)\cap \bigcup\limits_{\cR'\in \mathcal{S}_{n-1}} \cR' \neq\emptyset\end{cases}\right\}
	\end{equation*}
We say that \(\frac{\mathbf p}{q}\) is a \emph{leading rational} of a hyperplane \(L\in \cL_n^*\) if \(L\) is a hyperplane containing \(\frac{\mathbf p}{q}\) and the above conditions are satisfied.

The following observations follow directly from the construction of \(\cC^{\boldsymbol\tau}(N)\).

\begin{enumerate}
\item
Every hyperplane \(L\in \bigcup_{n\in \N}\cL_n^*\) contains at least one leading rational.

\item
Suppose that \(\frac{\mathbf p}{q}\) is a leading rational with $ N^{n-1}\le q< N^n$,
and let \(L\in \cL_n^*\) be a hyperplane containing \(\frac{\mathbf p}{q}\). Let \(\cR\in \V_{(n,\cdots,n)}\) be the rectangle corresponding to \(L\). Recall that the side lengths of \(\cR\) are $t^{-\rho_i}N^{-n\rho_i}$, $i=1,\cdots,m$.
Since \(q<N^n\), we have \(q^{-\rho_i}>N^{-n\rho_i}\), and hence $t^{-\rho_i}N^{-n\rho_i}<\frac12\,q^{-\rho_i}$,
provided \(t>2\). Therefore,
\[
\cR\subseteq
\Delta\!\left(\frac{\mathbf p}{q},\,q^{-\boldsymbol\rho}\right).
\]
Moreover, we have $\delta_i(n) \leq q^{-\rho_i}$, then if $\cR'\in \V_{\boldsymbol{\ell}(n)}$ is such that $\cR'\cap \Delta(L,\boldsymbol \delta(n))\neq\emptyset$, then
	$$ \cR'\subseteq \Delta\left(\frac{\p}{q},2q^{-\boldsymbol \rho}\right).$$
\item Suppose $\frac{\p}{q}\in \Q^m$, $N^{n-1}\leq q < N^n$, is not a leading rational, by Simplex Lemma, $\frac{\p}{q}$ lies on a hyperplane of $\mathcal{L}_n$. But since $\Delta\left(\frac{\p}{q},\boldsymbol \delta(n)\right)\cap \bigcup_{\cR'\in \mathcal{S}_{n-1}} \cR' =\emptyset$, then $\frac{\p}{q}$ must lie in the neighbourhood of a hyperplane in the previous level. That is, there exists $k<n$, $L\in \cL_k^*$ and $\tilde{\cR}\in \V_{\boldsymbol{\ell}(k)}$, such that $\frac{\p}{q}\in \tilde{\cR}$ and $\tilde{\cR} \cap \Delta(L,\boldsymbol \delta(k)) \neq \emptyset$. In particular, any leading rational of $L$ has denominator $v$ such that $v<N^{n-1}\leq q$.
\end{enumerate}

\vspace{0.3cm}
Using the set of leading rationals, we define
\[
\cA_m\bigl(Q(N,\boldsymbol\tau),\Phi_{\boldsymbol\rho}\bigr)
:=
\left\{
\mathbf x\in [0,1]^m :
\left|x_i-\frac{p_i}{q}\right|<\phi_i(q)\ \text{for }i=1,\cdots,m\ \text{for infinitely many } \frac{\mathbf p}{q}\in Q(N,\boldsymbol\tau)
\right\},
\]
where \(\Phi_{\boldsymbol\rho}=(\phi_1,\cdots,\phi_m):\N\to \R_{>0}^m\) is given by
\[
\phi_i(q):=3q^{-\rho_i},\qquad i=1,\cdots,m.
\]
We show that $\cC^{\boldsymbol\tau}(N)\subseteq \cA_m\bigl(Q(N,\boldsymbol\tau),\Phi_{\boldsymbol\rho}\bigr)$.

\begin{lemma}\label{lb:5}
For every $N\in \N$ such that $\cC^{\boldsymbol \tau}(N)$ is defined,
\[
\cC^{\boldsymbol\tau}(N)\subseteq \cA_m\bigl(Q(N,\boldsymbol\tau),\Phi_{\boldsymbol\rho}\bigr).
\]
\end{lemma}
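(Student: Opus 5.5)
Fix $\mathbf x\in\cC^{\boldsymbol\tau}(N)$. The plan is to produce infinitely many leading rationals $\frac{\mathbf p}{q}$ with $|x_i-p_i/q|<3q^{-\rho_i}$ for all $i$. We begin with Dirichlet's Theorem with exponents $a_i=\rho_i$; this is legitimate since $\rho_i\ge1$ and $\sum\rho_i=m+1$. It gives infinitely many rationals $\frac{\mathbf p}{q}$ with $|x_i-p_i/q|<q^{-\rho_i}$. We may assume infinitely many distinct denominators occur; otherwise $\mathbf x$ would be rational, and $\mathbf x\in\Delta(\mathbf x,\mathbf c(N)q^{-1-\boldsymbol\tau})$ would contradict Lemma~\ref{lb:2}. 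Take such a Dirichlet rational $\frac{\mathbf p}{q}$ with $N^{n-1}\le q<N^n$. We then split into two cases according to whether $\frac{\mathbf p}{q}$ is itself a leading rational.

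\textbf{Case 1: $\frac{\mathbf p}{q}\in Q(N,\boldsymbol\tau)$.} Then $\frac{\mathbf p}{q}$ already satisfies the approximation inequality with constant $1<3$, and nothing more is needed.

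\textbf{Case 2: $\frac{\mathbf p}{q}$ is not leading.} By observation (3), there exist $k<n$, $L\in\cL_k^*$ and $\tilde\cR\in\V_{\boldsymbol\ell(k)}$ with $\frac{\mathbf p}{q}\in\tilde\cR$ and $\tilde\cR\cap\Delta(L,\boldsymbol\delta(k))\ne\emptyset$. By observation (1), $L$ has a leading rational $\frac{\mathbf p'}{v}$, with $v<N^{k}\le N^{n-1}\le q$. By observation (2), $\tilde\cR\subseteq\Delta(\frac{\mathbf p'}{v},2v^{-\boldsymbol\rho})$. The triangle inequality then gives
\[
|x_i-p_i'/v|\le|x_i-p_i/q|+|p_i/q-p_i'/v|<q^{-\rho_i}+2v^{-\rho_i}\le 3v^{-\rho_i},
\]
using $v\le q$. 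So in this case we replace $\frac{\mathbf p}{q}$ by the leading rational $\frac{\mathbf p'}{v}$, which satisfies the required inequality. (Note that $\mathbf x$ itself lies in a surviving rectangle and hence not in $\tilde\cR$; but we only use that $\frac{\mathbf p}{q}$ lies in $\tilde\cR$, which is fine.)

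\textbf{Infinitude, the main obstacle.} Each Dirichlet rational yields an admissible leading rational, but the Case~2 replacements could a priori all coincide with a single $\frac{\mathbf p'}{v}$. To rule this out, suppose only finitely many leading rationals $\frac{\mathbf p'}{v}$ arise. Each such rational belongs to a hyperplane $L\in\cL_k^*$, and $\mathbf x\in\cC^{\boldsymbol\tau}(N)$ lies outside $\Delta(L,\boldsymbol\delta(k))$ by \eqref{eqb:1}. Since $\mathbf x$ is at positive distance from these finitely many neighbourhoods, there is $\eta>0$ with $\max_i|x_i-y_i|/\delta_i(k)\ge 1+\eta$ for all $\mathbf y$ in those hyperplanes. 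On the other hand, Case~2 requires $\frac{\mathbf p}{q}\in\tilde\cR$ with $\tilde\cR$ meeting $\Delta(L,\boldsymbol\delta(k))$, so by the claim in Lemma~\ref{lb:3}, $\frac{\mathbf p}{q}\in\Delta(L,2\boldsymbol\delta(k))$. As $q\to\infty$ we have $|x_i-p_i/q|<q^{-\rho_i}\to0$, so $\frac{\mathbf p}{q}\to\mathbf x$, and $\mathbf x$ would lie in the closure of $\Delta(L,2\boldsymbol\delta(k))$ for one fixed $L$. This alone is not a contradiction, since the factor 2 is too weak. The fix is to sharpen the argument using the explicit side lengths of $\tilde\cR$, which are at most $\delta_i(k)$. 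We then get $\frac{\mathbf p}{q}\to\mathbf x$ with $\mathbf x$ inside the surviving rectangle containing it, while the $\frac{\mathbf p}{q}$ lie in removed rectangles $\tilde\cR$ of the fixed finite level $\boldsymbol\ell(k)$. Only finitely many such rectangles exist at that level, so $\mathbf x$ would lie in the closed union of removed rectangles of level $\boldsymbol\ell(k)$. Its complement at that level is a finite union of surviving closed rectangles, so $\mathbf x$ would sit on a common boundary. Here we expect to need care, possibly by using Lemma~\ref{lb:2} to exclude rational boundary points, or by redefining the removal so that shared faces are excluded. This is the step we expect to be delicate. Granting it, infinitely many distinct leading rationals approximate $\mathbf x$, and $\mathbf x\in\cA_m(Q(N,\boldsymbol\tau),\Phi_{\boldsymbol\rho})$.
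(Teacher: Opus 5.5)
Your argument follows the paper's proof step for step. It applies Dirichlet with exponents $\rho_i$, treats the leading case as immediate, handles the non-leading case through Observations (3), (1), (2) and the triangle inequality with $v\le q$, and argues infinitude by contradiction from the assumption that only finitely many hyperplanes occur.

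The gap is that you never reach that contradiction. You end with ``granting it'', and your parenthetical remark in Case~2 that $\mathbf x\notin\tilde\cR$ asserts exactly what is in doubt. What is needed is that $\mathbf x$ does not belong to $U_k:=\bigcup_{j\le k}\bigcup_{L\in\cL_j^*}\bigcup\{\cR'\in\V_{\boldsymbol\ell(j)}:\cR'\cap\Delta(L,\boldsymbol\delta(j))\neq\emptyset\}$. Since $U_k$ is closed and, under the contradiction hypothesis, contains infinitely many Dirichlet rationals converging to $\mathbf x$, this would finish the proof. The paper simply asserts $\mathbf x\notin U_k$ ``by the construction''.

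Your suspicion is well founded. The cells of $\V_{\boldsymbol\ell(j)}$ are closed and share faces, so a point of $\cC^{\boldsymbol\tau}(N)$ on a common face of a surviving cell and a removed cell belongs to $U_k$. Neither your argument nor the paper's one-line justification deals with such points. Lemma~\ref{lb:2} does not exclude them in general: such a point need only have one rational coordinate, whereas Lemma~\ref{lb:2} only removes neighbourhoods of points all of whose coordinates are rational.

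The step can be completed off a Lebesgue-null set, and that is all the paper ever uses.
\begin{itemize}
\item Let $Z$ be the union of the boundaries of all cells of all grids $\V_{\boldsymbol\ell(j)}$, $j\ge 1$. It lies in countably many hyperplanes $\{x_i=\text{const}\}$, so $\lambda_m(Z)=0$.
\item Take $\mathbf x\in\cC^{\boldsymbol\tau}(N)\setminus Z$. For each $j$ exactly one cell of $\V_{\boldsymbol\ell(j)}$ contains $\mathbf x$, and it contains $\mathbf x$ in its interior. Since $\mathbf x\in\bigcup_{\cR\in\cS_j}\cR$, that cell lies in $\cS_j$, so it meets no $\Delta(L,\boldsymbol\delta(j))$ with $L\in\cL_j^*$. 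Every other closed cell of the grid misses its interior.
\item Hence $\mathbf x\notin U_k$, and your contradiction goes through. Also $\mathbf x\in(0,1)^m$, so the Dirichlet rationals eventually lie in $[0,1]^m$, where Observation (3) applies.
\item To pass from infinitely many hyperplanes to infinitely many distinct leading rationals, note that the denominator of a leading rational fixes its level $k$. The point lies in at most $2^m$ cells of $\V_{(k,\dots,k)}$, hence on at most $2^m$ members of $\cL_k$.
\end{itemize}
This proves $\cC^{\boldsymbol\tau}(N)\setminus Z\subseteq\cA_m(Q(N,\boldsymbol\tau),\Phi_{\boldsymbol\rho})$. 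That suffices, because the lemma is used only in Lemma~\ref{lb:8}, via $\lambda_m\bigl(\cR\cap\cA_m(Q(N,\boldsymbol\tau),\Phi_{\boldsymbol\rho})\bigr)\ge\lambda_m\bigl(\cR\cap\cC^{\boldsymbol\tau}(N)\bigr)$, which is insensitive to $Z$. So either state the lemma in this almost-everywhere form, or give a separate argument for points of $\cC^{\boldsymbol\tau}(N)\cap Z$.
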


\begin{proof}
Let \(\mathbf x\in \cC^{\boldsymbol\tau}(N)\), and define
\[
P(\mathbf x)
:=
\left\{
\frac{\mathbf p}{q}\in \Q^m\cap [0,1]^m :
\mathbf x\in \prod_{i=1}^m
\left(
\frac{p_i}{q}-q^{-\rho_i},
\frac{p_i}{q}+q^{-\rho_i}
\right)
\right\}.
\]
Since \(\sum_{i=1}^m \rho_i = m+1\) and \(\rho_1\geq \cdots\geq \rho_m\ge 1\), Dirichlet's theorem implies that \(P(\mathbf x)\) is infinite.

If \(P(\mathbf x)\cap Q(N,\boldsymbol\tau)\) is infinite, then \(\mathbf x\in \cA_m(Q(N,\boldsymbol\tau),\Phi_{\boldsymbol\rho})\) immediately, since
\[
\left|x_i-\frac{p_i}{q}\right|<q^{-\rho_i}<3q^{-\rho_i}=\phi_i(q)
\qquad (i=1,\cdots,m).
\]

It therefore remains to consider the case where \(P(\mathbf x)\setminus Q(N,\boldsymbol\tau)\) is infinite. Let $
\frac{\mathbf p}{q}\in P(\mathbf x)\setminus Q(N,\boldsymbol\tau)$ and $n\in \N$
such that $N^{n-1}\leq q<N^n$. Since \(\frac{\mathbf p}{q}\) is not a leading rational, Observation~(3) implies that there exist \(k<n\), a hyperplane \(\widetilde L\in \cL_k^*\), and a rectangle $\widetilde{\cR}\in \V_{\boldsymbol{\ell}(k)}$
such that
\[
\frac{\mathbf p}{q}\in \widetilde{\cR}
\qquad\text{and}\qquad
\widetilde{\cR}\cap \Delta(\widetilde L,\boldsymbol\delta(k))\neq \emptyset.
\]
Thus we may associate \(\frac{\mathbf p}{q}\) to a hyperplane \(\widetilde L\in \bigcup_{j=1}^\infty\cL_j^*\).

Now let \(\frac{\mathbf w}{v}\) be any leading rational of \(\widetilde L\). By Observation~(2), we have
\[
\widetilde{\cR}
\subseteq \Delta\left(\frac{\mathbf{w}}{v},2v^{-\boldsymbol \rho}\right)
\]
Since \(\frac{\mathbf p}{q}\in \widetilde{\cR}\), it follows that
\[
\left|\frac{p_i}{q}-\frac{w_i}{v}\right|<2v^{-\rho_i},
\qquad i=1,\cdots,m.
\]
Therefore, for \(i\in\{1,\cdots,m\}\),
\[
\left|x_i-\frac{w_i}{v}\right|
\le
\left|x_i-\frac{p_i}{q}\right|
+
\left|\frac{p_i}{q}-\frac{w_i}{v}\right|
<
q^{-\rho_i}+2v^{-\rho_i}.
\]
Since \(v< N^{n-1}\le q\), we have \(q^{-\rho_i}\le v^{-\rho_i}\), and hence
\[
\left|x_i-\frac{w_i}{v}\right|<3v^{-\rho_i}=\phi_i(v).
\]
Thus \(\mathbf x\) is \(\Phi_{\boldsymbol\rho}\)-approximable by the leading rational \(\frac{\mathbf w}{v}\).

It remains to show that this happens for infinitely many leading rationals. Suppose, for contradiction, that only finitely many hyperplanes in \(\bigcup_{j=1}^\infty \cL_j^*\) are associated to rationals in \(P(\mathbf x)\setminus Q(N,\boldsymbol\tau)\). Then there exists \(k\in \N\) such that every rational in \(P(\mathbf x)\setminus Q(N,\boldsymbol\tau)\) is associated to a hyperplane in $
\bigcup_{j=1}^k \cL_j^*$.
Hence
\[
P(\mathbf x)\setminus Q(N,\boldsymbol\tau)
\subseteq
\bigcup_{j=1}^k
\bigcup_{L\in \cL_j^*}
\bigcup
\left\{
\cR'\in \V_{\boldsymbol{\ell}(j)} :
\cR'\cap \Delta(L,\boldsymbol\delta(j))\neq \emptyset
\right\}.
\]
For each \(\frac{\mathbf p}{q}\in P(\mathbf x)\), we have
\[
\left|x_i-\frac{p_i}{q}\right|<q^{-\rho_i}\to 0
\qquad\text{as } q\to\infty.
\]
Since \(P(\mathbf x)\setminus Q(N,\boldsymbol\tau)\) is infinite, it follows that \(\mathbf x\) lies in the closure of the set on the right-hand side. On the other hand, by the construction of \(\cC^{\boldsymbol\tau}(N)\),
\[
\mathbf x\notin
\bigcup_{j=1}^k
\bigcup_{L\in \cL_j^*}
\bigcup
\left\{
\cR'\in \V_{\boldsymbol{\ell}(j)} :
\cR'\cap \Delta(L,\boldsymbol\delta(j))\neq \emptyset
\right\}.
\]
Moreover, the latter set is a finite union of closed rectangles, and is therefore closed. Hence it must contain all of its limit points, a contradiction.

We conclude that infinitely many hyperplanes in \(\bigcup_{j=1}^\infty \cL_j^*\) are associated to rationals in \(P(\mathbf x)\setminus Q(N,\boldsymbol\tau)\). Choosing one leading rational from each such hyperplane, we obtain infinitely many leading rationals \(\frac{\mathbf w}{v}\in Q(N,\boldsymbol\tau)\) such that
\[
\left|x_i-\frac{w_i}{v}\right|<3v^{-\rho_i},
\qquad i=1,\cdots,m.
\]
Therefore,
\[
\mathbf x\in \cA_m\bigl(Q(N,\boldsymbol\tau),\Phi_{\boldsymbol\rho}\bigr).
\]
This completes the proof.
\end{proof}

\medskip
Next, we establish another structural property of the set of leading rationals. This is a further technical lemma needed in the proof of the \(T_{G,\cR}\)-Lemma.
\begin{lemma}\label{lb:6}
For any $\frac{\p}{q}\in Q(N,\boldsymbol \tau)$ such that $q\geq N$, there exists $\cR\in \bigcup \limits_{n=1}^\infty \mathcal{S}_{n}$ such that $\cR \subseteq \Delta \left(\frac{\p}{q},q^{-1-\boldsymbol \tau}\right)$ and
\begin{equation}
c_1\lambda_m\left(\Delta\left(\frac{\p}{q},q^{-1-\boldsymbol \tau}\right)\right)\leq \lambda_m(\cR)
\end{equation}
where \(c_1>0\) depends only on the fixed parameters \(N,t,\boldsymbol\tau,\boldsymbol\rho\), and is independent of \(\frac{\mathbf p}{q}\).
\end{lemma}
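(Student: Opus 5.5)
The plan is to take for $\cR$ the rectangle of $\cS_{n-1}$ that witnesses the fact that $\frac{\mathbf p}{q}$ is a leading rational. Then we check directly that it sits inside $\Delta\!\left(\frac{\mathbf p}{q},q^{-1-\boldsymbol\tau}\right)$ and has comparable volume. Fix $\frac{\mathbf p}{q}\in Q(N,\boldsymbol\tau)$ with $q\ge N$, and let $n$ be the level with $N^{n-1}\le q<N^n$; since $q\ge N$ we have $n\ge 2$, so $\cS_{n-1}$ is defined. By condition (ii) in the definition of $Q(N,\boldsymbol\tau)$, there is some $\cR\in\cS_{n-1}\subseteq\bigcup_j\cS_j$ with $\cR\cap\Delta\!\left(\frac{\mathbf p}{q},\boldsymbol\delta(n)\right)\neq\emptyset$. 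This $\cR$ lies in $\V_{\boldsymbol\ell(n-1)}$, so its side lengths are $t^{-\rho_i}N^{-\ell_i(n-1)\rho_i}$.

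\emph{Step 1 (size of $\cR$).} From $\ell_i(n-1)=\lceil (n+1)(1+\tau_i)/\rho_i\rceil$ we get
\[
(n+1)(1+\tau_i)\le \ell_i(n-1)\rho_i\le (n+1)(1+\tau_i)+\rho_i .
\]
Hence each side length of $\cR$ lies between $t^{-\rho_i}N^{-(n+1)(1+\tau_i)-\rho_i}$ and $t^{-\rho_i}N^{-(n+1)(1+\tau_i)}$.

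\emph{Step 2 (containment).} Take $\mathbf z\in\cR$ and a point $\mathbf x\in\cR\cap\Delta\!\left(\frac{\mathbf p}{q},\boldsymbol\delta(n)\right)$. For each coordinate,
\[
\left|z_i-\tfrac{p_i}{q}\right|\le t^{-\rho_i}N^{-(n+1)(1+\tau_i)}+t^{-\rho_i}N^{-(n+2)(1+\tau_i)}\le 2t^{-\rho_i}N^{-n(1+\tau_i)}.
\]
Since $q<N^n$, we have $q^{-1-\tau_i}>N^{-n(1+\tau_i)}$. Because $t>2$ and $\rho_i\ge1$, the right-hand side above is less than $q^{-1-\tau_i}$. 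Thus $\cR\subseteq\Delta\!\left(\frac{\mathbf p}{q},q^{-1-\boldsymbol\tau}\right)$.

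\emph{Step 3 (volume comparison).} We have
\[
\lambda_m\!\left(\Delta\!\left(\tfrac{\mathbf p}{q},q^{-1-\boldsymbol\tau}\right)\right)=2^m\prod_i q^{-1-\tau_i}\le 2^m\prod_i N^{-(n-1)(1+\tau_i)},
\]
using $q\ge N^{n-1}$. By Step 1,
\[
\lambda_m(\cR)\ge \prod_i t^{-\rho_i}N^{-(n+1)(1+\tau_i)-\rho_i}.
\]
Dividing, the dependence on $n$ cancels and we obtain
\[
\frac{\lambda_m(\cR)}{\lambda_m\!\left(\Delta\!\left(\frac{\mathbf p}{q},q^{-1-\boldsymbol\tau}\right)\right)}\ \ge\ c_1:=2^{-m}\,t^{-(m+1)}\,N^{-2(m+\sigma)-(m+1)},
\]
using $\sum_i\rho_i=m+1$ and $\sum_i(1+\tau_i)=m+\sigma$. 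This constant depends only on $N,t,\boldsymbol\tau,\boldsymbol\rho$, as the lemma requires.

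There is no serious obstacle here. The only delicate points are two pieces of bookkeeping: the ceiling in $\ell_i(n-1)$, which costs only the factor $N^{-\rho_i}$, and the index shift between the level $n$ of $q$ and the level $n-1$ of the surviving rectangle, which costs the factor $N^{-2(1+\tau_i)}$. Both are absorbed into $c_1$. One must also use $q\ge N$ to guarantee $n\ge2$, so that $\cS_{n-1}$ exists and condition (ii) genuinely supplies a surviving rectangle.
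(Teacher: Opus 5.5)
Your proposal is correct and follows essentially the same route as the paper. You take the rectangle of $\cS_{n-1}$ that witnesses condition (ii), and prove containment by the triangle inequality using $t>2$ and $q<N^n$. You then compare volumes via the ceiling bound $\ell_i(n-1)\rho_i\le (n+1)(1+\tau_i)+\rho_i$ and $q\ge N^{n-1}$, and arrive at exactly the paper's constant $c_1=2^{-m}t^{-m-1}N^{-m-1}N^{-2(m+\sigma)}$.
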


\begin{proof}
Let
\[
\frac{\mathbf p}{q}\in Q(N,\boldsymbol\tau),
\qquad
N^{n-1}\le q<N^n
\]
for some \(n\geq 2\). By the definition of a leading rational,
\[
\Delta\!\left(\frac{\mathbf p}{q},\boldsymbol\delta(n)\right)
\cap
\bigcup_{\cR\in \cS_{n-1}}\cR
\neq \emptyset.
\]
Choose \(\widetilde{\cR}\in \cS_{n-1}\) such that
\[
\widetilde{\cR}\cap \Delta\!\left(\frac{\mathbf p}{q},\boldsymbol\delta(n)\right)\neq \emptyset.
\]
Since \(\widetilde{\cR}\in \cS_{n-1}\), it belongs to \(\bigcup_{n=1}^\infty \cS_n\), so it suffices to show that \(\widetilde{\cR}\) satisfies the required properties.

The rectangle \(\widetilde{\cR}\) has side lengths $
t^{-\rho_i}N^{-\ell_i(n-1)\rho_i}$, $i=1,\cdots,m$. 
Let \(\mathbf x\in \widetilde{\cR}\cap \Delta\!\left(\frac{\mathbf p}{q},\boldsymbol\delta(n)\right)\). Then for any \(\mathbf z\in \widetilde{\cR}\) and $i\in \{1,\cdots,m\}$,
\[
\left|z_i-\frac{p_i}{q}\right|
\le
|z_i-x_i|+\left|x_i-\frac{p_i}{q}\right|
\le
t^{-\rho_i}N^{-\ell_i(n-1)\rho_i}+\delta_i(n)\]
Recall $\ell_i(n-1) =\left \lceil (n+1)\frac{1+\tau_i}{\rho_i}\right\rceil$, $t>2$ then 
\[
t^{-\rho_i}N^{-\ell_i(n-1)\rho_i}
\le
t^{-\rho_i}N^{-n(1+\tau_i)}<\frac{1}{2}q^{-1-\tau_i}
\]
Similarly,
\[
\delta_i(n)=t^{-\rho_i}N^{-(n+2)(1+\tau_i)}
\le
t^{-\rho_i}N^{-2(1+\tau_i)}N^{-n(1+\tau_i)} < \frac{1}{2} q^{-1-\tau_i}
\]
Thus
\[
\widetilde{\cR}\subseteq \Delta\!\left(\frac{\mathbf p}{q},q^{-1-\boldsymbol\tau}\right).
\]

It remains to estimate the measure of \(\widetilde{\cR}\). We have
\begin{align*}
\lambda_m(\widetilde{\cR})
&=\prod\limits_{i=1}^m t^{-\rho_i} N^{-\ell_i(n-1)\rho_i} = t^{-m-1} \prod\limits_{i=1}^m N^{-\left \lceil (n+1)\frac{1+\tau_i}{\rho_i}\right\rceil \rho_i}\\
&\geq t^{-m-1} \prod\limits_{i=1}^m N^{-(n+1)(1+\tau_i)-\rho_i} = t^{-m-1}N^{-m-1} N^{-2(m+\sum_{i=1}^m \tau_i)}N^{-(n-1)(m+\sum_{i=1}^m \tau_i)}
\end{align*}
On the other hand,
\[
\lambda_m\!\left(\Delta\!\left(\frac{\mathbf p}{q},q^{-1-\boldsymbol\tau}\right)\right)
= \prod\limits_{i=1}^m 2q^{-1-\tau_i} = 2^m q^{-m-\sum_{i=1}^m \tau_i}.
\]
Because \(q\ge N^{n-1}\), we have
\[
q^{-m-\sum_{i=1}^m \tau_i}
\le
N^{-(n-1)(m+\sum_{i=1}^m \tau_i)}.
\]
Therefore,
\[
\lambda_m(\widetilde{\cR})
\ge
2^{-m}t^{-m-1}N^{-m-1} N^{-2(m+\sum_{i=1}^m \tau_i)}
\,
\lambda_m\!\left(\Delta\!\left(\frac{\mathbf p}{q},q^{-1-\boldsymbol\tau}\right)\right).
\]
Thus the conclusion holds with $
c_1:=2^{-m}t^{-m-1}N^{-m-1} N^{-2(m+\sum_{i=1}^m \tau_i)}$, which is independent of \(\frac{\mathbf p}{q}\).
\end{proof}

\vspace{0.5cm}
\section{Step 3: The Set $\D_{\epsilon}(B)$}
In the final step of the article, we construct a Cantor set
\[
\D_\epsilon(B)\subseteq B\cap \cC^{\boldsymbol\tau}(N)\cap \cA_m(\Psi_{\boldsymbol\tau})
\]
together with a mass distribution \(\mu\) supported on \(\D_\epsilon(B)\). We shall verify that \(\mu\) satisfies the hypotheses of the Mass Distribution Principle, thereby completing the proof of Theorem~\ref{tb:2}. The key tool in the construction is the \(T_{G,\cR}\)-lemma, which is inspired by the \(K_{G,B}\)-covering lemma in \cite{BeresnevichVelani2006} and the \(T_{G,I}\)-lemma in \cite{KoivusaloLevesleyWardZhang2024}. Before proving this lemma, we first establish a version of the Vitali covering lemma adapted to our family of rectangles.

\begin{lemma}[Vitali covering lemma for rectangles]\label{lb:7}
Let \(\mathcal V\) be a non-empty collection of rectangles satisfying
\[
\mathcal V\subseteq
\left\{
\Delta\!\left(\frac{\mathbf p}{q},\,3q^{-\boldsymbol\rho}\right)
:
\frac{\mathbf p}{q}\in Q(N,\boldsymbol\tau)
\right\}.
\]
Then there exists a countable subcollection \(\mathcal G\subseteq \mathcal V\) such that:
\begin{enumerate}
\item the rectangles in \(\mathcal G\) are pairwise disjoint;
\item
\[
\bigcup_{\cR\in \mathcal V}\cR
\subseteq
\bigcup_{\cR\in \mathcal G} 5\cR.
\]
\end{enumerate}
\end{lemma}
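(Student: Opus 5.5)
The plan is the usual greedy Vitali selection, sorted by denominator. The point is that for rectangles centred at rationals with the same exponent vector \(\boldsymbol\rho\), the size is governed by the single parameter \(q\). If \(\cR=\Delta(\frac{\mathbf p}{q},3q^{-\boldsymbol\rho})\) and \(\cR'=\Delta(\frac{\mathbf w}{v},3v^{-\boldsymbol\rho})\) with \(v\le q\), then \(3q^{-\rho_i}\le 3v^{-\rho_i}\) holds for every \(i\) simultaneously, because each \(\rho_i\ge 1>0\). So the rectangle with the smaller denominator dominates the other coordinatewise. If \(\cR\cap\cR'\ne\emptyset\), the triangle inequality in each coordinate gives, for \(\mathbf z\in\cR\),
\[
\left|z_i-\tfrac{w_i}{v}\right|\le 3q^{-\rho_i}+3q^{-\rho_i}+3v^{-\rho_i}\le 9v^{-\rho_i}<15v^{-\rho_i}.
\]
Hence \(\cR\subseteq 5\cR'\); in fact \(3\cR'\) already suffices. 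This replaces the "radius comparison" of the classical Vitali lemma. The \(\boldsymbol\rho\)-scaling guarantees that "smaller in one direction" implies "smaller in all directions", which is the only place the anisotropy could cause trouble.

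The selection goes as follows. The set \(Q(N,\boldsymbol\tau)\) is countable, so \(\mathcal V\) is countable. Moreover, for each fixed \(q\) there are only finitely many \(\mathbf p\) with \(\frac{\mathbf p}{q}\in[0,1]^m\), since leading rationals lie in hyperplanes \(L\subseteq\cR\subseteq[0,1]^m\). So only finitely many rectangles of \(\mathcal V\) have any given denominator. Enumerate \(\mathcal V\) as \(\cR_1,\cR_2,\dots\) in nondecreasing order of the denominator \(d(\cR_j)\), breaking ties arbitrarily. Then run the greedy algorithm: put \(\cR_1\) into \(\mathcal G\), and for each \(j\ge2\) put \(\cR_j\) into \(\mathcal G\) if it is disjoint from every rectangle already selected. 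The resulting \(\mathcal G\) is countable and pairwise disjoint by construction, which gives (1).

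For (2), take any \(\cR_j\in\mathcal V\). If it was selected, then \(\cR_j\subseteq 5\cR_j\) and there is nothing to prove. Otherwise it meets some previously selected \(\cR_k\) with \(k<j\). By the ordering, \(d(\cR_k)\le d(\cR_j)\), so the comparison above yields \(\cR_j\subseteq 5\cR_k\). Taking the union over \(j\) gives (2).

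The main point to check carefully is the well-ordering. A plain enumeration by nondecreasing denominator has order type \(\omega\) only because each denominator contributes finitely many rectangles. That is why the finiteness of rationals in \([0,1]^m\) with a fixed denominator is used. If one prefers not to rely on this, one can instead process the denominators \(q=1,2,3,\dots\) in turn, choosing within each \(q\) a maximal disjoint subfamily among those rectangles not meeting earlier choices. Tie-breaking among equal denominators is harmless, since equal-\(q\) rectangles have identical side lengths and the same comparison applies.
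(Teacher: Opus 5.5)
Your proposal is correct and follows the paper's proof: countability of $Q(N,\boldsymbol\tau)$, enumeration of $\mathcal V$ by non-decreasing denominator (the paper's ``larger to smaller side lengths''), and greedy selection of disjoint rectangles. Where the paper says only ``routine calculation'', you supply the details: the coordinatewise domination $3q^{-\rho_i}\le 3v^{-\rho_i}$ giving $\cR_j\subseteq 3\cR_k\subseteq 5\cR_k$, and the finiteness of each denominator class, which the paper uses implicitly for its enumeration to exist.
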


\begin{proof}
Since \(Q(N,\boldsymbol\tau)\) is countable, the collection \(\mathcal V\) is also countable. Enumerate its elements as
\[
\mathcal V=\{\cR_1,\cR_2,\cR_3,\dots\},
\]
in such a way that if \(\cR_i=\Delta(\frac{\mathbf p}{q},3q^{-\boldsymbol\rho})\) and
\(\cR_j=\Delta(\frac{\mathbf p'}{q'},3{q'}^{-\boldsymbol\rho})\) with \(i<j\), then \(q\le q'\). In other words, the rectangles are ordered from larger to smaller side lengths.

We now construct \(\mathcal G\) greedily. Let \(\cR_{i_1}:=\cR_1\). Having chosen pairwise disjoint rectangles
\[
\cR_{i_1},\dots,\cR_{i_m},
\]
if every rectangle in \(\mathcal V\) intersects one of them, we stop. Otherwise, let \(\cR_{i_{m+1}}\) be the first rectangle in the enumeration that is disjoint from all previously chosen rectangles. In this way we obtain a countable pairwise disjoint subcollection
\[
\mathcal G:=\{\cR_{i_1},\cR_{i_2},\dots\}\subseteq \mathcal V.
\]
Routine calculation may show that $\mathcal{G}$ is a countable subcollection that satsifies the two conditions in the statement of the Lemma. This finishes the proof of the Lemma.
\end{proof}

\medskip
We now state the \(T_{G,\cR}\)-lemma.
\begin{lemma}[\(T_{G,\cR}\)-lemma]\label{lb:8}
Let \(\{R_\ell\}_{\ell\in \N}\) be the collection of rectangles
\begin{equation}\label{eqb:4}
\{R_\ell\}_{\ell\in \N}
=
\left\{
\Delta\!\left(\frac{\mathbf p}{q},\,3q^{-\boldsymbol\rho}\right)
\right\}_{\frac{\mathbf p}{q}\in Q(N,\boldsymbol\tau)},
\end{equation}
arranged so that the denominators of their centres are non-decreasing. When \(N\) is sufficiently large, for every
\[
\cR\in \bigcup_{n=1}^\infty \cS_n
\qquad\text{and}\qquad
G\ge 1,
\]
there exists a finite subcollection
\[
T_{G,\cR}\subseteq \{R_\ell:\ell\ge G\}
\]
such that the rectangles in \(T_{G,\cR}\) are pairwise disjoint, are contained in \(\cR\), and satisfy
\[
\lambda_m\!\left(\bigcup_{R'\in T_{G,\cR}} R'\right)\ge c_2\,\lambda_m(\cR),
\]
where \(c_2>0\) is independent of \(G\) and \(\cR\).
\end{lemma}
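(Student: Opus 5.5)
The plan is to follow the $K_{G,B}$-covering strategy of Beresnevich--Velani, transplanted to rectangles via Lemma~\ref{lb:7}, with Lemma~\ref{lb:4} and Lemma~\ref{lb:5} supplying the full-measure input inside $\cR$.

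\textbf{Step 1: most of $\cR$ is covered by late rectangles.} Fix $\cR\in\bigcup_n\cS_n$ and $G\ge 1$. Take $\kappa=\tfrac12$ and assume $N\ge N_{1/2}$, so that Lemma~\ref{lb:4} gives
\[
\lambda_m\bigl(\cR\cap\cC^{\boldsymbol\tau}(N)\bigr)\ge\tfrac12\lambda_m(\cR).
\]
This is where ``$N$ sufficiently large'' enters. By Lemma~\ref{lb:5}, every $\mathbf x\in\cC^{\boldsymbol\tau}(N)$ lies in infinitely many rectangles $R_\ell$. Since only finitely many $R_\ell$ with $\ell<G$ exist, each such $\mathbf x$ lies in some $R_\ell$ with $\ell\ge G$. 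Now impose an additional largeness threshold $\ell\ge G'\ge G$. Here $G'$ is chosen so that the denominator $q$ of every $R_\ell$ with $\ell\ge G'$ is large enough that $5R_\ell$ has side lengths $15q^{-\rho_i}$ at most a small fraction $\eta$ of the corresponding side of $\cR$. This is possible because the side lengths of $\cR$ are fixed and $q^{-\rho_i}\to0$. Let $\mathcal V$ be the collection of those $R_\ell$ with $\ell\ge G'$ that meet $\tfrac12\cR\cap\cC^{\boldsymbol\tau}(N)$ (a concentric shrinking). Then $\mathcal V$ covers a set of measure at least $c\lambda_m(\cR)$. To get this with $\kappa$ small, apply Lemma~\ref{lb:4} with $\kappa$ small enough that $\tfrac12\cR$ retains a definite proportion $2^{-m}-\kappa$ of its measure in $\cC^{\boldsymbol\tau}(N)$.

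\textbf{Step 2: disjoint selection.} Apply Lemma~\ref{lb:7} to $\mathcal V$ to get a pairwise disjoint countable $\mathcal G\subseteq\mathcal V$ with $\bigcup\mathcal V\subseteq\bigcup_{R\in\mathcal G}5R$. Then
\[
\sum_{R\in\mathcal G}\lambda_m(R)\ge 5^{-m}\sum_{R\in\mathcal G}\lambda_m(5R)\ge 5^{-m}(2^{-m}-\kappa)\lambda_m(\cR).
\]
By the smallness from Step 1 (with $\eta<1/2$), every $R\in\mathcal G$ meets $\tfrac12\cR$ and so is contained in $\cR$. Because $\sum_{\mathcal G}\lambda_m(R)\le\lambda_m(\cR)<\infty$, a finite subfamily $T_{G,\cR}\subseteq\mathcal G$ captures at least half the sum. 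This gives
\[
c_2=\tfrac12\,5^{-m}(2^{-m}-\kappa),
\]
which is independent of $G$ and $\cR$, as the lemma requires.

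\textbf{Main obstacle.} The delicate point is the containment step: the rectangles $R_\ell$ have shape $q^{-\boldsymbol\rho}$, while $\cR$ has shape governed by $N^{-\ell_i(n)\rho_i}$. The proportions differ, so the smallness must hold coordinatewise. This is handled by taking $G'$ large relative to the fixed $\cR$; that dependence is harmless because $c_2$ does not depend on $G'$.

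A second subtlety is that Lemma~\ref{lb:7} as stated applies only to subcollections of the $3q^{-\boldsymbol\rho}$ rectangles, which $\mathcal V$ is. Its ordering by denominators gives the $5R$ engulfing property: a larger-index rectangle has smaller sides in every coordinate simultaneously, since all coordinates use the same $q$. I expect no further difficulty there.
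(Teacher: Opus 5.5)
Your argument is correct and follows essentially the same route as the paper's proof: Lemma~\ref{lb:4} and Lemma~\ref{lb:5} supply the measure input, Lemma~\ref{lb:7} yields a disjoint subfamily whose $5R$-dilates cover, a late-index threshold forces the selected rectangles into $\cR$, and a finite truncation keeps half the mass. The only difference is the compact core inside $\cR$. You use the concentric $\tfrac12\cR$ with the single subtraction $\lambda_m(\tfrac12\cR\cap\cC^{\boldsymbol\tau}(N))\ge(2^{-m}-\kappa)\lambda_m(\cR)$, so you should fix $\kappa<2^{-m}$ from the start rather than $\kappa=\tfrac12$. The paper instead uses the level-$(n+1)$ survivors lying strictly inside $\cR$ and applies Lemma~\ref{lb:4} to each; that needs an extra boundary-loss estimate, which the paper only sketches. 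Your version avoids it at the cost of a smaller but still uniform constant $c_2$.
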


\begin{proof}
Observe first that
\[
\limsup_{\ell\to\infty} R_\ell
=
\cA_m\bigl(Q(N,\boldsymbol\tau),\Phi_{\boldsymbol\rho}\bigr).
\]
Let  $\cR\in \cS_n$ for some $n\in \N_{\geq 1}$. 
By Lemma~\ref{lb:4} and Lemma~\ref{lb:5}, if \(N\) is sufficiently large, then
\[
\lambda_m\!\left(\cR\cap \cA_m(Q(N,\boldsymbol\tau),\Phi_{\boldsymbol\rho})\right)
\ge
\lambda_m\!\left(\cR\cap \cC^{\boldsymbol\tau}(N)\right)
\ge
\frac34\,\lambda_m(\cR).
\]
Let \(\cS^*_{n+1}(\cR)\) denote the collection of rectangles in \(\cS_{n+1}\) that are strictly contained in the interior of \(\cR\). For \(N\) sufficiently large, the proportion of level-$(n+1)$ rectangles lost near the boundary of \(\cR\) is arbitrarily small. Together with Lemma \ref{lb:4}, when $N$ is sufficiently large, we may assume that
\[
\lambda_m\!\left(\bigcup_{\cR'\in \cS^*_{n+1}(\cR)}\cR'\right)\ge \frac12\,\lambda_m(\cR).
\]

Fix \(G\ge 1\), and define
\[
\mathcal V
:=
\left\{
R_\ell:\,
\ell\ge G
\ \text{and}\
R_\ell\cap \bigcup_{\cR'\in \cS^*_{n+1}(\cR)}\cR'\neq \emptyset
\right\}.
\]
Since the denominators of the centres of the \(R_\ell\) are non-decreasing, the side lengths of \(R_\ell\) tend to zero as \(\ell\to\infty\). Because every rectangle in \(\cS^*_{n+1}(\cR)\) lies strictly inside \(\cR\), every sufficiently small rectangle meeting \(\bigcup_{\cR'\in \cS^*_{n+1}(\cR)}\cR'\) is actually contained in \(\cR\). By discarding finitely many initial terms if necessary, we may therefore assume that every rectangle in \(\mathcal V\) lies inside \(\cR\).

Applying Lemma~\ref{lb:7} to \(\mathcal V\) we obtain a countable pairwise disjoint subcollection $\mathcal{G}\subseteq \mathcal{V}$, such that
\[
\bigcup_{R\in \mathcal V} R
\subseteq
\bigcup_{R\in \mathcal G} 5R.
\]
Therefore,
\begin{align*}
\lambda_m\!\left(\bigcup_{R\in \mathcal G} 5R\right)
&\ge
\lambda_m\!\left(
\left(\bigcup_{\cR'\in \cS^*_{n+1}(\cR)}\cR'\right)
\cap
\limsup_{\ell\to\infty} R_\ell
\right) \\
&=
\sum_{\cR'\in \cS^*_{n+1}(\cR)}
\lambda_m\!\left(
\cR'\cap \limsup_{\ell\to\infty} R_\ell
\right) \\
&=
\sum_{\cR'\in \cS^*_{n+1}(\cR)}
\lambda_m\!\left(
\cR'\cap \cA_m(Q(N,\boldsymbol\tau),\Phi_{\boldsymbol\rho})
\right) \\
&\ge
\sum_{\cR'\in \cS^*_{n+1}(\cR)}
\frac34\,\lambda_m(\cR')=
\frac34\,
\lambda_m\!\left(\bigcup_{\cR'\in \cS^*_{n+1}(\cR)}\cR'\right)\geq
\frac38\,\lambda_m(\cR).
\end{align*}

Since the rectangles in \(\mathcal G\) are pairwise disjoint, the rectangles \(5R\) with \(R\in\mathcal G\) may overlap, but we still have
\[
\lambda_m\!\left(\bigcup_{R\in\mathcal G}5R\right)
\le
\sum_{R\in\mathcal G}\lambda_m(5R)
=
5^m\sum_{R\in\mathcal G}\lambda_m(R)
=
5^m\,\lambda_m\!\left(\bigcup_{R\in\mathcal G}R\right).
\]
Combining this with the previous estimate gives
\[
\lambda_m\!\left(\bigcup_{R\in\mathcal G}R\right)
\ge
\frac{3}{8\cdot 5^m}\,\lambda_m(\cR).
\]

Finally, since the rectangles in \(\mathcal G\) are pairwise disjoint and \(\lambda_m(\bigcup_{R\in\mathcal G}R)<\infty\), we have
\[
\lambda_m\!\left(\bigcup_{\substack{R_\ell\in \mathcal G\\ \ell\ge j}} R_\ell\right)\to 0
\qquad\text{as }j\to\infty.
\]
Hence there exists \(j_0\ge G\) such that
\[
\lambda_m\!\left(\bigcup_{\substack{R_\ell\in \mathcal G\\ \ell\le j_0}} R_\ell\right)
\ge
\frac12\,
\lambda_m\!\left(\bigcup_{R\in\mathcal G}R\right)
\ge
\frac{3}{16\cdot 5^m}\,\lambda_m(\cR).
\]
We now set
\[
T_{G,\cR}:=\{R_\ell\in \mathcal G:\ell\le j_0\}.
\]
Then \(T_{G,\cR}\) is a finite pairwise disjoint subcollection of \(\{R_\ell:\ell\ge G\}\), every element of \(T_{G,\cR}\) is contained in \(\cR\), and
\[
\lambda_m\!\left(\bigcup_{R'\in T_{G,\cR}}R'\right)\ge \frac{3}{16\cdot 5^m}\,\lambda_m(\cR).
\]
Thus the conclusion holds with $c_2:=3/(16\cdot 5^m)$.
\end{proof}

We now construct the Cantor set \(\D_\epsilon(B)\) and the mass distribution \(\mu\) simultaneously.

\medskip
\noindent\textbf{Construction of \(\D_\epsilon(B)\) and the measure \(\mu\).}
\begin{enumerate}
\item
Fix \(\epsilon>0\) and a ball \(B\subseteq [0,1]^m\). Choose \(N\) sufficiently large so that Lemma~\ref{lb:8} hold, and so that there exists a rectangle \(\cR_0\in \cS_{1}\) with \(\cR_0\subseteq B\). The existence of such an \(N\) is guaranteed by Lemma~\ref{lb:3}. Define
\[
E_0:=\{\cR_0\},
\qquad
\mu(B):=\mu(\cR_0):=1.
\]

\item
Recall the collection \(\{R_\ell\}_{\ell\in \N}\) defined in \eqref{eqb:4}. Let \(G_1\) be such that $d(R_{G_1})\geq N$, and let \(T_{G_1,\cR_0}\) be the collection obtained by applying the \(T_{G,\cR}\)-Lemma to the rectangle \(\cR_0\) with index \(G_1\). By Lemma~\ref{lb:8}, the rectangles in \(T_{G_1,\cR_0}\) are pairwise disjoint and are contained in \(\cR_0\).

\item
For each \(\ell\in \N\), define the \emph{shrink} of
\[
R_\ell=\Delta\left(\frac{\mathbf p}{q},\,3q^{-\boldsymbol\rho}\right)
\]
to be
\[
R_\ell(\boldsymbol\tau):=\Delta\left(\frac{\mathbf p}{q},\,q^{-1- \boldsymbol \tau}\right).
\]
Conversely, given
\[
R(\boldsymbol\tau)=\Delta\left(\frac{\mathbf p}{q},\,q^{-1- \boldsymbol \tau}\right),
\]
we refer to
\[
R=\Delta\left(\frac{\mathbf p}{q},\,3q^{-\boldsymbol\rho}\right)
\]
as its \emph{enlargement}. Define
\[
E_1:=T_{G_1,\cR_0}^{\boldsymbol\tau},
\qquad
T_{G_1,\cR_0}^{\boldsymbol\tau}:=\{R(\boldsymbol\tau):R\in T_{G_1,\cR_0}\}.
\]
Then the rectangles in \(T_{G_1,\cR_0}^{\boldsymbol\tau}\) remain pairwise disjoint and are contained in \(\cR_0\). For each \(R(\boldsymbol\tau)\in E_1\), define
\begin{equation}
\mu\bigl(R(\boldsymbol\tau)\bigr)=\mu(R):=
\frac{\lambda_m(R)}{\sum\limits_{R'\in T_{G_1,\cR_0}}\lambda_m(R')}\,\mu(\cR_0).
\end{equation}

\item
Suppose that \(E_j\) has been constructed for \(j=0,1,\dots,n-1\). We construct the \(n\)-th layer \(E_n\) inductively as follows.
\begin{enumerate}
\item
\textbf{Nesting.}
For each \(\widetilde{R}(\boldsymbol\tau)\in E_{n-1}\), Lemma~\ref{lb:6} provides a rectangle $\widetilde{\cR}\in \bigcup_{j=1}^\infty \cS_j$
such that
\[
\widetilde{\cR}\subseteq \widetilde{R}(\boldsymbol\tau)
\qquad\text{and}\qquad
c_1\,\lambda_m\bigl(\widetilde{R}(\boldsymbol\tau)\bigr)\le \lambda_m(\widetilde{\cR}).
\]
Whenever \(\widetilde{\cR}\) and \(\widetilde{R}(\boldsymbol\tau)\) are related in this way, we write
\[
\widetilde{\cR}\sim \widetilde{R}(\boldsymbol\tau).
\]

\item
Choose \(G_n>G_{n-1}\) such that, whenever \(\ell\ge G_n\), then for every \(\widetilde{R}(\boldsymbol\tau)\in E_{n-1}\),
\begin{equation}\label{eqb:5}
d(\widetilde{R})^{-1-\tau_1} > 3\,d(R_\ell)^{-\rho_m}
\qquad\text{and}\qquad
\frac{\mu\bigl(\widetilde{R}(\boldsymbol\tau)\bigr)}{\lambda_m\bigl(\widetilde{R}(\boldsymbol\tau)\bigr)}
\le d(R_\ell)^\epsilon.
\end{equation}
Such a choice of \(G_n\) is possible because \(E_{n-1}\) is finite and \(d(R_\ell)\to\infty\) as \(\ell\to\infty\). Once \(G_n\) has been chosen, for each \(\widetilde{R}(\boldsymbol\tau)\in E_{n-1}\), apply the \(T_{G,\cR}\)-Lemma to the corresponding rectangle \(\widetilde{\cR}\) with \(\widetilde{\cR}\sim \widetilde{R}(\boldsymbol\tau)\), thereby obtaining a collection of rectangles \(T_{G_n,\widetilde{\cR}}\).

\item
For each such \(\widetilde{\cR}\), define
\[
T_{G_n,\widetilde{\cR}}^{\boldsymbol\tau}
:=
\{R'(\boldsymbol\tau):R'\in T_{G_n,\widetilde{\cR}}\}.
\]
For every \(R(\boldsymbol\tau)\in T_{G_n,\widetilde{\cR}}^{\boldsymbol\tau}\), define
\begin{equation}\label{eqb:6}
\mu\bigl(R(\boldsymbol\tau)\bigr)=\mu(R):=
\frac{\lambda_m(R)}{\sum\limits_{R'\in T_{G_n,\widetilde{\cR}}}\lambda_m(R')}\,
\mu\bigl(\widetilde{R}(\boldsymbol\tau)\bigr).
\end{equation}

\item
We then define the \(n\)-th layer \(E_n\) by
\[
E_n:=
\bigcup_{\substack{\widetilde{\cR}\sim \widetilde{R}(\boldsymbol\tau)\\ \widetilde{R}(\boldsymbol\tau)\in E_{n-1}}}
T_{G_n,\widetilde{\cR}}^{\boldsymbol\tau}.
\]
\end{enumerate}

\item
Finally, define
\begin{equation}
\D_\epsilon(B):=
\bigcap_{n\in \N}\bigcup_{R(\boldsymbol\tau)\in E_n} R(\boldsymbol\tau).
\end{equation}
By \cite[Proposition 1.7]{Falconer1990}, the set function \(\mu\) extends to a Borel measure on \([0,1]^m\) whose support is contained in \(\D_\epsilon(B)\). Moreover, for any set \(F\subseteq [0,1]^m\),
\[
\mu(F)
=
\inf\left\{
\sum_{R(\boldsymbol\tau)\in I}\mu\bigl(R(\boldsymbol\tau)\bigr)
:
F\cap \D_\epsilon(B)\subseteq \bigcup_{R(\boldsymbol\tau)\in I} R(\boldsymbol\tau)
\right\},
\]
where the infimum is taken over all countable subcollections \(I\subseteq \bigcup_{n\in \N}E_n\) that cover \(F\cap \D_\epsilon(B)\). This completes the construction of \(\D_\epsilon(B)\) and the mass distribution \(\mu\).
\end{enumerate}

\vspace{0.3cm}
We make some observation regarding the above construction. These are direct consequences of the $T_{G,\cR}$ Lemma and the construction algorithm.
\begin{enumerate}
\item Suppose $R(\boldsymbol \tau)\in E_n$, $n\geq 1$, then for any $j=1,\cdots, n-1$, there exists a unique rectangle $R'(\boldsymbol\tau)\in E_j$ such that $R(\boldsymbol \tau) \subseteq R'(\boldsymbol \tau)$.  Moreover,  $R(\boldsymbol \tau)\subseteq \cR_0$ for any $R(\boldsymbol \tau)\in E_n$.
\item Suppose $R(\boldsymbol \tau)$ and $R'(\boldsymbol \tau)$ both belong to $E_n$ for some $n\in \N_{\geq 1}$, then either $R(\boldsymbol \tau)=R'(\boldsymbol \tau)$ or $R(\boldsymbol \tau)\cap R'(\boldsymbol \tau) = \emptyset$. The same statement holds if we replace $R(\boldsymbol \tau)$ and $R'(\boldsymbol \tau)$ by $R$ and $R'$.
\item Suppose $R(\boldsymbol\tau)\in E_i$ and $R'(\boldsymbol \tau)\in E_j$ with $i<j$. Then either $R'(\boldsymbol \tau)\subseteq R(\boldsymbol \tau)$ or $R(\boldsymbol \tau)\cap R'(\boldsymbol \tau)= \emptyset$
\end{enumerate}

\vspace{0.3cm}
We now verify that $\mathcal{D}_\epsilon(B)$ is indeed a subset of $B\cap \B_m(\Psi_{\boldsymbol \tau})$. From the construction, it is clear that $\mathcal{D}_\epsilon(B)\subseteq B$, so it remains to show that $\mathcal{D}_{\epsilon}(B)\subseteq \B_m(\Psi_{\boldsymbol \tau})$
\begin{lemma}
$\mathcal{D}_\epsilon(B)\subseteq \cC^{\boldsymbol \tau}(N)\cap \cA_m(\Psi_{\boldsymbol \tau}) \subseteq \B_m(\Psi_{\boldsymbol \tau})$.
\end{lemma}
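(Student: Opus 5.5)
We prove the two inclusions separately. The second one, \(\cC^{\boldsymbol\tau}(N)\cap \cA_m(\Psi_{\boldsymbol\tau})\subseteq \B_m(\Psi_{\boldsymbol\tau})\), is immediate from Lemma~\ref{lb:2}. Indeed, if \(\mathbf x\in \cC^{\boldsymbol\tau}(N)\), then no rational \(\frac{\mathbf p}{q}\in[0,1]^m\) satisfies \(|x_i-\frac{p_i}{q}|\le c_i(N)q^{-1-\tau_i}\) for all \(i\). The rational \(\frac{\mathbf p}{q}\) attached to any solution of \(|qx_i-p_i|<c\psi_i(q)\) lies within \(c\,q^{-1-\tau_i}\) of \(\mathbf x\) in each coordinate. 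For \(q\ge 2\) it therefore lies in \([0,1]^m\) up to a trivial boundary adjustment, which we handle by replacing \(\frac{\mathbf p}{q}\) with its projection onto \([0,1]^m\), or by noting that \(\mathbf x\in\cR_0\) lies in the interior. Hence with \(c:=\min_i c_i(N)\in(0,1)\) we get \(\mathbf x\notin \cA_m(c\Psi_{\boldsymbol\tau})\), so \(\mathbf x\notin\bigcap_{0<c<1}\cA_m(c\Psi_{\boldsymbol\tau})\). Together with \(\mathbf x\in\cA_m(\Psi_{\boldsymbol\tau})\), this gives \(\mathbf x\in\B_m(\Psi_{\boldsymbol\tau})\).

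For \(\D_\epsilon(B)\subseteq\cA_m(\Psi_{\boldsymbol\tau})\), fix \(\mathbf x\in\D_\epsilon(B)\). By the nesting observations, for each \(n\ge1\) there is a unique \(R_n(\boldsymbol\tau)=\Delta(\frac{\mathbf p_n}{q_n},q_n^{-1-\boldsymbol\tau})\in E_n\) containing \(\mathbf x\). Then \(|q_nx_i-p_{n,i}|\le q_n^{-\tau_i}\). To get strict inequality we use closed-versus-open care: either shrink the approximating boxes slightly, or note that \(\mathbf x\) lies in the smaller next-level rectangle, which is contained in the interior thanks to the strict inequality in \eqref{eqb:5}. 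The denominators satisfy \(q_n=d(R_n)\) with \(R_n\in\{R_\ell:\ell\ge G_n\}\) and \(G_n\) strictly increasing, so \(q_n\to\infty\). The rationals \(\frac{\mathbf p_n}{q_n}\) are therefore infinitely many distinct ones, and \(\mathbf x\in\cA_m(\Psi_{\boldsymbol\tau})\).

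For \(\D_\epsilon(B)\subseteq\cC^{\boldsymbol\tau}(N)\), we use the nesting step. Each \(R_{n}(\boldsymbol\tau)\in E_n\) contains, via Lemma~\ref{lb:6}, a rectangle \(\widetilde{\cR}_n\in\cS_{k_n}\) for some level \(k_n\), and the next layer \(E_{n+1}\) is built from \(T_{G_{n+1},\widetilde{\cR}_n}\), whose members lie inside \(\widetilde{\cR}_n\) by Lemma~\ref{lb:8}. Thus \(\mathbf x\in R_{n+1}(\boldsymbol\tau)\subseteq\widetilde{\cR}_n\in\cS_{k_n}\). The levels satisfy \(k_n\to\infty\), since \(\widetilde{\cR}_n\) comes from \(\cS_{j-1}\) with \(N^{j-1}\le q_n<N^j\) and \(q_n\to\infty\). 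Because \(\cS_k\) consists of rectangles contained in members of \(\cS_{k-1}\), membership in some element of \(\cS_{k}\) implies membership in elements of every \(\cS_{j}\), \(j\le k\). Hence \(\mathbf x\in\bigcup_{\cR\in\cS_j}\cR\) for all \(j\), that is, \(\mathbf x\in\cC^{\boldsymbol\tau}(N)\). The same reasoning with \(\cR_0\in\cS_1\) covers the first step.

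The main obstacle is bookkeeping rather than substance: checking that the chain \(R_{n+1}(\boldsymbol\tau)\subseteq\widetilde{\cR}_n\subseteq R_n(\boldsymbol\tau)\) really holds, and that the level indices \(k_n\) of the \(\widetilde{\cR}_n\) are unbounded. Both follow from Lemma~\ref{lb:6}, Lemma~\ref{lb:8}, and \(q_n\to\infty\). Beyond that, only minor boundary issues with strict inequalities remain.
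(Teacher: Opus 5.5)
Your proposal is correct and follows the paper's own argument: the nested chain $R_n(\boldsymbol\tau)\supseteq\widetilde{\cR}_n\supseteq R_{n+1}(\boldsymbol\tau)$ gives membership in $\cA_m(\Psi_{\boldsymbol\tau})$ and in $\cC^{\boldsymbol\tau}(N)$, and Lemma~\ref{lb:2} then rules out $\cA_m(c\Psi_{\boldsymbol\tau})$ for $c=\min_i c_i(N)$. You also make explicit three details the paper leaves implicit: the levels $k_n$ are unbounded, the inequalities are strict, and the rationals lie in $[0,1]^m$. The last two are automatic: $|x_i-p_i/q|<c\,q^{-1-\tau_i}<1/q$ already forces $p_i/q\in[0,1]$, and interior containment comes from $q^{-1-\tau_i}<3q^{-\rho_i}$ (so $R(\boldsymbol\tau)\subseteq\mathrm{int}\,R$), not from \eqref{eqb:5}.
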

\begin{proof}
For every $\x\in \mathcal{D}_\epsilon(B)$, one may associate a sequence
	$$R_{i_1}(\boldsymbol \tau)\supseteq \cR_{i_1}\supseteq R_{i_2}(\boldsymbol \tau)\supseteq \cR_{i_2}\supseteq R_{i_3}(\boldsymbol \tau)\supseteq \cR_{i_3}\cdots \ni \x$$
where $R_{i_n}(\boldsymbol \tau) \in E_n$ and $\cR_{i_n}\sim R_{i_n}(\boldsymbol \tau)$. Since $\x\in \bigcap\limits_{n=1}^\infty R_{i_n}(\boldsymbol \tau)$, $\x\in \cA_m(\Psi_{\boldsymbol \tau})$; on the other hand, as $\x\in \bigcap \limits_{i=1}^\infty \cR_{i_n}$, $\x\in \cC^{\boldsymbol \tau}(N)$. This implies $\x\notin \cA_m(\mathbf{c}(N)\Psi_{\boldsymbol \tau})$. Combined, this gives $\x\in \B_m(\Psi_{\boldsymbol \tau})$. 
\end{proof}

We establish one further structural property of the sets \(E_n\). This will be useful later when verifying the hypotheses of the Mass Distribution Principle.

\begin{lemma}\label{lb:9}
Let \(F=B(\mathbf x,r)\subseteq [0,1]^m\) be a ball. Suppose that there exist two distinct rectangles
\[
R(\boldsymbol\tau),\,R'(\boldsymbol\tau)\in E_n
\]
for some \(n\in \N\), both of which intersect \(F\). Then
\[
r\ge d\bigl(R(\boldsymbol\tau)\bigr)^{-\rho_1}.
\]
\end{lemma}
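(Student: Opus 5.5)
Since \(F\) is a ball in the supremum norm of radius \(r\), any two points of \(F\) are at distance at most \(2r\). The plan is therefore to prove the stronger separation statement: if \(R(\boldsymbol\tau)\neq R'(\boldsymbol\tau)\) both lie in \(E_n\), then their sup-distance is at least \(2\,d(R(\boldsymbol\tau))^{-\rho_1}\), at least up to a harmless constant that is absorbed below. The key point is that the shrinks in \(E_n\) sit inside their enlargements \(R=\Delta(\frac{\mathbf p}{q},3q^{-\boldsymbol\rho})\), and these enlargements are pairwise disjoint (Observation (2)). Because \(1+\tau_i>\rho_i\), the shrink \(R(\boldsymbol\tau)\) is buried deep inside \(R\).

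\textbf{Step 1: the same-parent case.} Write \(q=d(R)\), \(q'=d(R')\). Suppose first that both \(R\) and \(R'\) come from the same collection \(T_{G_n,\widetilde\cR}\). Then \(R\cap R'=\emptyset\), so some coordinate \(i\) separates them: \(|\frac{p_i}{q}-\frac{p'_i}{q'}|\ge 3q^{-\rho_i}+3q'^{-\rho_i}\). Points of \(R(\boldsymbol\tau)\) and \(R'(\boldsymbol\tau)\) lie within \(q^{-1-\tau_i}\) and \(q'^{-1-\tau_i}\) of these centres. Since \(q^{-1-\tau_i}\le q^{-\rho_i}\) (as \(q\ge1\) and \(1+\tau_i>\rho_i\)), the gap in coordinate \(i\) is at least \(2q^{-\rho_i}+2q'^{-\rho_i}\ge 2q^{-\rho_i}\ge 2q^{-\rho_1}\), using \(\rho_1\ge\rho_i\) and \(q\ge1\). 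Any ball meeting both shrinks therefore has \(2r\ge 2q^{-\rho_1}\), which gives the claim.

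\textbf{Step 2: the different-parent case.} Now suppose \(R(\boldsymbol\tau)\) and \(R'(\boldsymbol\tau)\) descend from different parents in \(E_{n-1}\). Induct on \(n\), and let \(k<n\) be the first level at which their ancestors \(A(\boldsymbol\tau)\neq A'(\boldsymbol\tau)\) split. By Observations (1) and (3), \(F\) meets both \(A(\boldsymbol\tau)\) and \(A'(\boldsymbol\tau)\), and these are distinct members of \(E_k\) with a common parent, or are \(\cR_0\)-children when \(k=1\). By Step 1, \(r\ge d(A)^{-\rho_1}\). It remains to compare \(d(A)^{-\rho_1}\) with \(d(R)^{-\rho_1}\). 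Since \(R\) was chosen with index at least \(G_n>G_k\), the denominators are non-decreasing along the enumeration and \(d(R)\ge d(A)\); indeed the first condition in \eqref{eqb:5} gives \(d(A)^{-1-\tau_1}>3\,d(R)^{-\rho_m}\), which forces \(d(R)\gg d(A)\). Hence \(d(A)^{-\rho_1}\ge d(R)^{-\rho_1}\), and so \(r\ge d(R(\boldsymbol\tau))^{-\rho_1}\).

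\textbf{Main obstacle.} The delicate point is the bookkeeping in Step 1: the constant \(3\) in the enlargements must absorb the shrink radii, which relies on \(q^{-1-\tau_i}\le q^{-\rho_i}\). A second subtlety is checking that rectangles chosen under different parents at the same level cannot be closer than their ancestors' separation allows. This holds because they are contained in disjoint ancestors, so the reduction in Step 2 is legitimate; this is exactly what Observation (3) guarantees.
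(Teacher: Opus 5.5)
Your proof is correct. Your Step 1 is exactly the paper's argument: disjointness of the enlargements gives a coordinate $i$ with $|p_i/q-p_i'/q'|\ge 3q^{-\rho_i}+3q'^{-\rho_i}$. The shrink half-widths $q^{-1-\tau_i}\le q^{-\rho_i}$ are absorbed, and $\rho_1\ge\rho_i$ finishes.

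Where you differ is the case split. The paper never separates same-parent from different-parent pairs. It uses that the enlargements of \emph{all} members of $E_n$ are pairwise disjoint, which is Observation (2), and you invoke it yourself. This holds because each enlargement in $T_{G_n,\widetilde\cR}$ lies in $\widetilde\cR\subseteq\widetilde R(\boldsymbol\tau)$, and distinct members of $E_{n-1}$ are disjoint. So the Step 1 computation applies verbatim to any two distinct members of $E_n$, and your Step 2 is redundant.

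Your reduction to the splitting level $k$ is still valid. It needs only disjointness within a single $T_{G,\cR}$-family, which the $T_{G,\cR}$-lemma states directly, together with growth of denominators down the tree. It even yields the stronger bound $r\ge d(A)^{-\rho_1}$.

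One piece of bookkeeping should be tightened. Condition \eqref{eqb:5} compares $R_\ell$ only with members of $E_{n-1}$, and the monotone enumeration alone does not tell you that $A$ precedes $R$. When $k<n-1$, you should chain \eqref{eqb:5} through the intermediate ancestors; each generation strictly increases the denominator, since $\rho_m<1+\tau_1$. That gives $d(A)<d(R)$, and hence $d(A)^{-\rho_1}\ge d(R)^{-\rho_1}$.
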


\begin{proof}
Write
\[
R(\boldsymbol\tau)=\Delta\left(\frac{\mathbf p}{q},\,q^{-1-\boldsymbol\tau}\right),
\qquad
R'(\boldsymbol\tau)=\Delta\left(\frac{\mathbf p'}{q'},\,(q')^{-1-\boldsymbol\tau}\right),
\]
and let
\[
R=\Delta\left(\frac{\mathbf p}{q},\,3q^{-\boldsymbol\rho}\right),
\qquad
R'=\Delta\left(\frac{\mathbf p'}{q'},\,3(q')^{-\boldsymbol\rho}\right)
\]
denote their enlargements.

Since \(R(\boldsymbol\tau)\) and \(R'(\boldsymbol\tau)\) belong to \(E_n\), their enlargements \(R\) and \(R'\) are distinct members of the corresponding \(T_{G,\widetilde{\cR}}\)-families used in the construction of \(E_n\). In particular, they are pairwise disjoint. Hence there exists some \(i\in\{1,\cdots,m\}\) such that
\[
\left|\frac{p_i}{q}-\frac{p_i'}{q'}\right|
\ge
3q^{-\rho_i}+3(q')^{-\rho_i}.
\]

Since \(F=B(\mathbf x,r)\) intersects both \(R(\boldsymbol\tau)\) and \(R'(\boldsymbol\tau)\), we have
\[
\left|\frac{p_i}{q}-x_i\right|
\le q^{-1-\tau_i}+r
\qquad\text{and}\qquad
\left|\frac{p_i'}{q'}-x_i\right|
\le (q')^{-1-\tau_i}+r
\]
for \(i=1,\cdots,m\). Therefore, by the triangle inequality, for the index \(i\) above,
\[
2r+q^{-1-\tau_i}+(q')^{-1-\tau_i}
\ge
\left|\frac{p_i}{q}-\frac{p_i'}{q'}\right|
\ge
3q^{-\rho_i}+3(q')^{-\rho_i}.
\]
Rearranging, we obtain
\[
r\ge
\frac12\Bigl(3q^{-\rho_i}+3(q')^{-\rho_i}-q^{-1-\tau_i}-(q')^{-1-\tau_i}\Bigr).
\]
Since \(1+\tau_i-\rho_i>0\), the $q^{-1-\tau_i}<q^{-\rho_i}$ and $(q')^{-1-\tau_i} < (q')^{-\rho_i}$, and likewise for \(q'\). Thus, for \(N\) sufficiently large,
\[
r\ge q^{-\rho_i}.
\]
Since \(\rho_1\geq \cdots\geq \rho_m\), then
\[
r\ge q^{-\rho_1}=d\bigl(R(\boldsymbol\tau)\bigr)^{-\rho_1}.
\]
This completes the proof.
\end{proof}

Finally, we will prove that the mass distribution $\mu$ defined above satisfies the hypothesis of the mass distribution principle with $\alpha = s_{\boldsymbol \rho}-2\epsilon$. More precisely, we establish the following theorem:
\begin{theorem}\label{tb:3}
Let $\mu$ be defined as above. Then there exists a constant $C>0$ and $r_o>0$, such that for every ball $F=\B(\x,r)\subseteq [0,1]^m$, $r<r_o$,
	\begin{equation}\label{eqb:7}
	\mu(F) \leq Cr^{s_{\boldsymbol{\rho}}-2\epsilon},
	\end{equation}
where $s_{\boldsymbol \rho}= \min \{\hat{s}_{\boldsymbol \rho},s\}$ and $\hat{s}_{\boldsymbol \rho}$ is defined in (\ref{eqb:16}).
\end{theorem}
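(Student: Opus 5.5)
We plan to verify \eqref{eqb:7} by a case analysis on which layer of the construction ``sees'' the ball \(F\), in the style of the mass distribution arguments of Koivusalo et al.\ and of Wang--Wu for rectangles.

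\textbf{Step 1: the mass of a single rectangle of \(E_n\).} Let \(R(\boldsymbol\tau)\in E_n\) with \(q=d(R)\), and let \(\widetilde R(\boldsymbol\tau)\in E_{n-1}\) be its parent, with \(\widetilde\cR\sim\widetilde R(\boldsymbol\tau)\). By \eqref{eqb:6}, Lemma~\ref{lb:8} and Lemma~\ref{lb:6},
\[
\mu(R(\boldsymbol\tau))\le \frac{\lambda_m(R)}{c_2\lambda_m(\widetilde\cR)}\,\mu(\widetilde R(\boldsymbol\tau))\le \frac{\lambda_m(R)}{c_1c_2}\cdot\frac{\mu(\widetilde R(\boldsymbol\tau))}{\lambda_m(\widetilde R(\boldsymbol\tau))}.
\]
The second condition in \eqref{eqb:5} bounds the last ratio by \(q^{\epsilon}\). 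Since \(\lambda_m(R)\asymp q^{-(m+1)}\), we obtain
\[
\mu(R(\boldsymbol\tau))\ll q^{-(m+1)+\epsilon},
\]
and more generally \(\mu\) restricted to \(R\) is comparable to \(\lambda_m\) times \(q^\epsilon\). This is where one \(\epsilon\) of loss enters.

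\textbf{Step 2: reduction to one rectangle.} Given \(F=B(\mathbf x,r)\) with \(r\) small, let \(n\) be the largest index such that \(F\) meets exactly one rectangle \(R(\boldsymbol\tau)\in E_n\). Then \(F\) meets at least two rectangles of \(E_{n+1}\), all inside \(R(\boldsymbol\tau)\), and so \(\mu(F)=\mu(F\cap R(\boldsymbol\tau))\). Lemma~\ref{lb:9} applied to \(E_{n+1}\) gives \(r\ge d(R'')^{-\rho_1}\) for the children \(R''\). We then estimate \(\mu(F)\) in two ways:
\begin{itemize}
\item trivially, \(\mu(F)\le\mu(R(\boldsymbol\tau))\);
\item by counting the children of \(R(\boldsymbol\tau)\) meeting \(F\).
\end{itemize}
The children are disjoint enlargements \(\Delta(\mathbf p'/q',3q'^{-\boldsymbol\rho})\) sitting inside \(\widetilde\cR\subseteq R(\boldsymbol\tau)\). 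Since they are disjoint, the number meeting \(F\) is at most \(\prod_i\max\{1,\,r/q'^{-\rho_i}\}\) up to constants. The first condition in \eqref{eqb:5} guarantees that every child is much smaller than \(R(\boldsymbol\tau)\) in every direction, so distinct scales separate. Combining the counting with Step 1 applied to the children gives
\[
\mu(F)\ll \mu(R(\boldsymbol\tau))\,\frac{\lambda_m\big(F\cap \Delta(R,\text{child scale})\big)}{\lambda_m(\widetilde\cR)}.
\]
Concretely, \(\mu(F)\) is bounded by the mass of \(R(\boldsymbol\tau)\) times the proportion of the grid of children-enlargements covered by \(F\), where each coordinate contributes \(\min\{1,\,r/(q^{-1-\tau_i})\}\) in the large range and \(r/q'^{-\rho_i}\) in the fine range.

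\textbf{Step 3: optimisation over \(r\).} Write \(r=q^{-A}\), where \(q=d(R)\). The bound from Step 2 becomes \(q^{-(m+1)+\epsilon}\) times a product over coordinates of factors of three kinds:
\begin{itemize}
\item \(1\) when \(A\le\rho_i\) (\(F\) is larger than the side in direction \(i\)), giving exponent \(A\);
\item \(r/q^{-1-\tau_i}\) when \(A\ge 1+\tau_i\);
\item the intermediate factor \(q^{\rho_i}r\) otherwise.
\end{itemize}
Taking logarithms, \(\log\mu(F)/\log r\) is a piecewise linear function of \(A\) whose breakpoints lie exactly in \(\mathcal A=\{\rho_i,1+\tau_i\}\). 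The minimum over \(A\) is therefore attained at some \(A\in\mathcal A\). Sorting coordinates by \(\rho_1\ge\cdots\ge\rho_m\) and \(\tau_1\ge\cdots\ge\tau_m\) produces the blocks \(i\le\nu_1\), \(\nu_1<i<\nu_2\), \(i\ge\nu_2\), and reproduces the expression in \eqref{eqb:16}. Hence \(\mu(F)\ll r^{\hat s_{\boldsymbol\rho}-\epsilon}q^{\epsilon}\). It remains to treat:
\begin{itemize}
\item the range \(A\le 1+\tau_m\) (coarse balls), where the bound comes from \(\mu(R(\boldsymbol\tau))\le q^{-(m+1)+\epsilon}\) against \(r\ge q^{-1-\tau_i}\), giving the exponent \(s\) via the Rynne--Dickinson form of \(s\);
\item the range of very small \(r\), where \(r\ge q'^{-\rho_1}\) from Lemma~\ref{lb:9}.
\end{itemize}
Absorbing \(q^{\epsilon}\le r^{-\epsilon}\) (valid because \(r\le q^{-1}\) up to constants in the relevant ranges) costs the second \(\epsilon\), which yields \(s_{\boldsymbol\rho}-2\epsilon\).

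\textbf{Main obstacle.} The hard step is Step 2: rigorously bounding the mass of \(F\cap R(\boldsymbol\tau)\) through the children when \(F\) is a cube but the children are anisotropic rectangles of varying denominators \(q'\). I would first bound the count by volume. Every child enlargement meeting \(F\) lies in an enlarged version of \(F\) with sides \(r+3q'^{-\rho_i}\), and the children are disjoint. The distribution \eqref{eqb:6} is proportional to \(\lambda_m(R')\), which gives \(\mu(F)\le \mu(\widetilde R(\boldsymbol\tau))\lambda_m(\widetilde F)/(c_2\lambda_m(\widetilde\cR))\). Here \(\widetilde F\) has \(i\)-th side \(\min\{r+q'^{-\rho_i},\,q^{-1-\tau_i}\}\). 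The minimum with \(q^{-1-\tau_i}\) is what forces the \(1+\tau_i\) breakpoints, and the case \(r<q'^{-\rho_i}\) is handled by passing one more level down.
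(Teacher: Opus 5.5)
Your skeleton matches the paper's. That is, a Lemma~\ref{lb:10}-type bound, localising $F$ to a single rectangle $R(\boldsymbol\tau)$, Lemma~\ref{lb:9}, and optimising over $A$ with breakpoints in $\{\rho_i,1+\tau_i\}$. But the step that actually produces $\hat s_{\boldsymbol\rho}$ is not carried out, and Step~3 as written is wrong.

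You write $r=q^{-A}$ with $q=d(R)$ and place the $\rho_i$-breakpoints relative to $q$. Inside $R(\boldsymbol\tau)$, however, the mass sits on children whose denominators $q'$ are much larger than $q$: by \eqref{eqb:5}, $3q'^{-\rho_m}<q^{-1-\tau_1}$. The enlargement of $R$ enters only through its volume $\asymp q^{-(m+1)}$, not through its shape. There are two decoupled regimes:
\begin{itemize}
\item If $r\ge q^{-1-\tau_1}$, every child is finer than $r$. At scale $r$, $\mu$ on $R(\boldsymbol\tau)$ is dominated by $\mu(R(\boldsymbol\tau))$ times normalised Lebesgue measure. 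The coordinate factors are $\min\{1,rq^{1+\tau_i}\}$, and one gets the Rynne--Dickinson exponent $s$. This counting is needed on all of $q^{-1-\tau_1}\le r<q^{-1-\tau_m}$, since the trivial bound $\mu(F)\le\mu(R(\boldsymbol\tau))$ only yields $(m+1)/(1+\tau_1)$, which can be below $s$.
\item If $r<q^{-1-\tau_1}$, you must compare $r=q'^{-A}$ with the \emph{children's} denominators. Children with $d\asymp q'$ contribute $\ll q'^{-(m+1)+\epsilon}\prod_i f_i$, where $f_i=rq'^{\rho_i}$ for $A\le\rho_i$ (the number of children across direction $i$), $f_i=1$ for $\rho_i<A<1+\tau_i$, and $f_i=rq'^{1+\tau_i}$ for $A\ge 1+\tau_i$ (obtained by passing to the grandchildren).
\end{itemize}
Your list has the first two factors interchanged; it is even discontinuous at $A=1+\tau_i$. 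With it, the blocks $i\le\nu_1$ and $\nu_1<i<\nu_2$ swap roles and you do not recover \eqref{eqb:16}.

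The genuinely missing idea is how to handle children meeting $F$ whose denominators are spread over many scales. You flag this as the main obstacle but do not resolve it. Neither the count $\prod_i\max\{1,rq'^{\rho_i}\}$ nor your $\widetilde F$ with sides $\min\{r+q'^{-\rho_i},q^{-1-\tau_i}\}$ makes sense for a single $q'$. Using the smallest $q'$ gives a bound that is useless once $r<q'^{-\rho_i}$ for some $i$. ``Passing one more level down'' also requires knowing how many children of each size meet $F$.

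The paper splits the children meeting $F$ into two kinds of classes:
\begin{itemize}
\item $\V_0=\{d(R)^{-\rho_m}\le r\}$. These lie in $B(\mathbf x,8r)$ and are bounded using the parent's proportional distribution \eqref{eqb:6}, so only the parent's $q^{\epsilon}$ appears. Applying your Step~1 to these children would instead give a factor $q'^{\epsilon}$ that cannot be absorbed, since $q'$ is unbounded in terms of $r$.
\item Dyadic classes $\V_t=\{2^{t-1}Q'\le d(R)\le 2^tQ'\}$. Lemma~\ref{lb:9} gives $Q'\ge r^{-1/\rho_1}$, so there are only $O(\log(1/r))$ such classes. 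In each one counts by disjointness, passes to the next level, and optimises over $A_t$.
\end{itemize}
Summing over the dyadic classes costs a factor $\log(1/r)\le r^{-\epsilon}$, and that is the true source of the second $\epsilon$. In your accounting, the single $q^{\epsilon}$ from Step~1 is counted twice and nothing is left for the logarithm.
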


To prove Theorem \ref{tb:3}, we first estimate the $\mu$-measure of rectangles in $E_n$. The result is as follows:
\begin{lemma}\label{lb:10}
Let \(R(\boldsymbol\tau)\in E_n\) for some \(n\geq 1\). Then
\[
\mu\bigl(R(\boldsymbol\tau)\bigr)=\mu(R)\leq C_1\,d(R)^{-m-1+\epsilon},
\]
where the constant \(C_1\) is independent of \(R(\boldsymbol\tau)\).
\end{lemma}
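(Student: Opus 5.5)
The bound comes from unwinding the recursive definition \eqref{eqb:6} one level, using the second condition in \eqref{eqb:5} to control the parent's density and the $T_{G,\cR}$-lemma to control the normalising sum. I will treat $n=1$ and $n\ge 2$ separately.

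\textbf{Case $n=1$.} There are finitely many rectangles in $E_1$, all with denominators at least $N$ and each with mass at most $1$. Hence the bound holds by taking $C_1$ large enough to cover these finitely many cases.

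\textbf{Case $n\ge 2$.} Let $R(\boldsymbol\tau)\in E_n$. It arises from some parent $\widetilde{R}(\boldsymbol\tau)\in E_{n-1}$ with $\widetilde{\cR}\sim\widetilde{R}(\boldsymbol\tau)$, and $R\in T_{G_n,\widetilde{\cR}}$. By \eqref{eqb:6},
\[
\mu(R)=\frac{\lambda_m(R)}{\sum_{R'\in T_{G_n,\widetilde{\cR}}}\lambda_m(R')}\,\mu\bigl(\widetilde{R}(\boldsymbol\tau)\bigr).
\]
I bound the three factors in turn.

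\emph{Numerator.} Since $\sum_i\rho_i=m+1$, we have $\lambda_m(R)=\prod_i 6\,d(R)^{-\rho_i}=6^m d(R)^{-m-1}$.

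\emph{Denominator.} The rectangles in $T_{G_n,\widetilde{\cR}}$ are pairwise disjoint. The $T_{G,\cR}$-lemma (Lemma~\ref{lb:8}) then gives
\[
\sum_{R'\in T_{G_n,\widetilde{\cR}}}\lambda_m(R')=\lambda_m\Bigl(\bigcup_{R'\in T_{G_n,\widetilde{\cR}}}R'\Bigr)\ge c_2\,\lambda_m(\widetilde{\cR}).
\]
Lemma~\ref{lb:6} gives $\lambda_m(\widetilde{\cR})\ge c_1\lambda_m(\widetilde{R}(\boldsymbol\tau))$. Together,
\[
\mu(R)\le \frac{6^m}{c_1c_2}\,d(R)^{-m-1}\,\frac{\mu\bigl(\widetilde{R}(\boldsymbol\tau)\bigr)}{\lambda_m\bigl(\widetilde{R}(\boldsymbol\tau)\bigr)}.
\]

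\emph{Density of the parent.} Since $R\in T_{G_n,\widetilde{\cR}}\subseteq\{R_\ell:\ell\ge G_n\}$, the second condition in \eqref{eqb:5} applies with $\ell$ the index of $R$. It gives $\mu(\widetilde{R}(\boldsymbol\tau))/\lambda_m(\widetilde{R}(\boldsymbol\tau))\le d(R)^{\epsilon}$. Therefore
\[
\mu(R)\le \frac{6^m}{c_1c_2}\,d(R)^{-m-1+\epsilon},
\]
so $C_1:=\max\{6^m/(c_1c_2),\,\text{the constant from }n=1\}$ works. This constant depends only on $m,N,t,\boldsymbol\tau,\boldsymbol\rho$, not on $R$.

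\textbf{Main obstacle.} The delicate point is the well-definedness of these inputs rather than any computation. Three things need checking. First, Lemma~\ref{lb:6} must apply to each parent, which requires $d(\widetilde{R})\ge N$; this is ensured by the choice $d(R_{G_1})\ge N$ and monotonicity of denominators. Second, $\widetilde{\cR}$ must lie in $\bigcup_j\cS_j$, so that Lemma~\ref{lb:8} applies with $c_2$ uniform in $G$ and $\cR$. Third, the density condition in \eqref{eqb:5} must hold for all $\ell\ge G_n$, so that it covers every $R\in T_{G_n,\widetilde{\cR}}$ and not merely $R_{G_n}$. I would verify these three points explicitly before writing the estimate.
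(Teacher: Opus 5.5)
Your proposal is correct and follows the paper's proof essentially step for step. The paper likewise unwinds \eqref{eqb:6} one level, uses $\lambda_m(R)=2^m3^m d(R)^{-m-1}$, bounds the normalising sum below by $c_1c_2\,\lambda_m(\widetilde{R}(\boldsymbol\tau))$ via Lemmas~\ref{lb:8} and~\ref{lb:6}, applies the density condition in \eqref{eqb:5}, and handles $n=1$ by enlarging $C_1$ to cover the finitely many rectangles of $E_1$.
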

\begin{proof}
Suppose $n\geq 2$, $R(\boldsymbol \tau)\in E_n$, $R(\boldsymbol \tau) = \Delta\left(\frac{\p}{q},q^{-1-\boldsymbol \tau}\right)$. Then $R = \Delta \left(\frac{\p}{q}, 3q^{-\boldsymbol \rho}\right)$, so $\lambda_m(R) = 2^m3^mq^{-m-1}=2^m3^md(R)^{-m-1}$. By Lemma \ref{lb:6} and Lemma \ref{lb:8}, suppose $\tilde{R}(\boldsymbol \tau)$ is the unique rectangle in $E_{n-1}$ such that $R(\boldsymbol \tau)\subseteq \tilde{R}(\boldsymbol \tau)$, then
	$$\sum \limits_{R'\in T_{G_n,\tilde{\cR}}} \lambda_m(R') \geq c_2 \lambda_m (\tilde{\cR}) \geq c_1\cdot c_2 \cdot \lambda_m(\tilde{R}(\boldsymbol \tau)).$$
Directly substituting into the definition, 
    \begin{equation*}
    \begin{split}
	\mu(R(\boldsymbol \tau)) &= \mu(R) = \frac{\lambda_m(R)}{\sum \limits_{R'\in T_{G_n,\tilde{\cR}}} \lambda_m(R')} \times \mu(\tilde{R}(\boldsymbol \tau))\\ &\leq 2^m3^md(R)^{-m-1}\cdot \frac{\mu(\tilde{R}(\boldsymbol \tau))}{c_1\cdot c_2\cdot \lambda_m(\tilde{R}(\boldsymbol \tau))} \overset{(\ref{eqb:5})}\leq C_1d(R)^{-m-1+\epsilon}.
    \end{split}
	\end{equation*}
where $C_1 =2^m3^m/c_1c_2$ is in independent of $R(\boldsymbol \tau)$. Since $E_1$ contains finitely many rectangles, we may further increase $C_1$ such that the statement also holds for $n=1$. This completes the proof of the Lemma.
\end{proof}

\vspace{0.5cm}
Using Lemma \ref{lb:10}, we give a proof of Theorem \ref{tb:3}.
\begin{proof}[Proof of Theorem \ref{tb:3}.]
If $F\cap \mathcal{D}_{\epsilon}(B)=\emptyset$, then $\mu(F)=0$, and inequality (\ref{eqb:7}) holds trivially.  So we may assume that $F\cap \D_\epsilon(B)\neq \emptyset$, which implies $F\cap \bigcap_{n=1}^\infty \bigcup_{R(\boldsymbol \tau)\in E_n} R(\boldsymbol \tau) \neq \emptyset$. In this case, $F$ must have nonempty intersection with some rectangles from each layer (because the layers are nested). Suppose for every $n\geq 1$, $F$ intersects exactly one $R(\boldsymbol \tau)\in E_n$, then $\mu(F)$ is no greater than $\mu$-measure of the unique rectangle it intersects in each layer. By Lemma \ref{lb:10}, if $R(\boldsymbol \tau)\in E_n$, then $\mu(R(\boldsymbol\tau))\rightarrow 0$, as $n\rightarrow \infty$. This again implies $\mu(F)=0$, hence inequality (\ref{eqb:7}) follows. Therefore, it remains to consider the case when $F$ intersects two distinct rectangles $R(\boldsymbol \tau)$ and $R'(\boldsymbol \tau) \in E_n$ for some $n\geq 1$. We fix $n$ to be the smallest such natural number.

Suppose $n=1$, then by Lemma \ref{lb:9}, it is possible to choose $C$ large enough such that the right hand side of (\ref{eqb:7}) is greater than $1$ and the inequality trivially holds. So, we may assume $n\geq 2$. By the minimality of $n$, $F$ intersects a unique rectangle $\hat{R}(\boldsymbol \tau)$ from $E_{n-1}$. Let $Q=d(\hat{R}(\boldsymbol \tau))$, then $\hat{R}(\boldsymbol \tau) = \Delta\left(\frac{\p}{Q},Q^{-1-\boldsymbol \tau}\right)$ for some $\frac{\p}{Q}\in Q(N,\boldsymbol \tau)$. We consider several cases based on the relative size of $r$ and $Q$:
\begin{itemize}
\item \textbf{Case 1:} $r\geq Q^{-1-\tau_m}$. Since the support of $\mu$ lies in $\mathcal{D}_{\epsilon}(B)$, then
$$\mu(F)=\mu(F\cap \mathcal{D}_{\epsilon}(B))\leq \mu(F\cap \hat{R}(\boldsymbol \tau)) \overset{\text{Lemma}\,\ref{lb:10}}\leq C_1d(\hat{R}(\boldsymbol \tau))^{-m-1+\epsilon} = C_1Q^{-m-1+\epsilon}.$$
By assumption $r\geq Q^{-1-\tau_m}$, thus,
	\begin{equation}\label{eqb:8}
	\mu(F)\leq C_1Q^{-m-1+\epsilon} \leq C_1 r^{\frac{m+1-\epsilon}{1+\tau_m}}=C_1r^{\frac{m+1}{1+\tau_m} - \frac{\epsilon}{1+\tau_m}} \leq C_1 r^{\frac{m+1}{1+\tau_m} - \epsilon} \leq C_1r^{s_{\boldsymbol \rho}-2\epsilon}
	\end{equation}
    
\item \textbf{Case 2:} $Q^{-1-\tau_1}\leq r<Q^{-1-\tau_m}$. Let $A=-\frac{\log r}{\log Q}$, then $Q^{-A}=r$ and $A\in (1+\tau_m,1+\tau_1]$. We consider the rectangles in the layer $E_n$ that have nonempty intersection with $F$. By the minimality of $n$, these rectangles must all be contained in  $\hat{R}(\boldsymbol \tau)$. Then
$$\mu(F) \leq \sum\limits_{\substack{R(\boldsymbol \tau)\in E_n\\R(\boldsymbol \tau)\cap F\neq \emptyset}} \mu(R(\boldsymbol \tau)) \overset{(\ref{eqb:6})}= \sum\limits_{\substack{R(\boldsymbol \tau)\in E_n\\R(\boldsymbol \tau)\cap F\neq \emptyset}}\frac{\lambda_m(R)}{\sum\limits_{R'\in T_{G_n},\hat{\mathcal{\cR}}} \lambda_m(R')} \times \mu(\hat{R}(\boldsymbol \tau)).$$
By Lemma \ref{lb:6} and Lemma \ref{lb:8}, $\sum \limits_{R'\in T_{G_n},\hat{\mathcal{R}}} \lambda_m(R') \geq c_1\cdot c_2\cdot \lambda (\hat{R}(\boldsymbol \tau))$. Therefore,
$$\mu(F) \leq \sum\limits_{\substack{R(\boldsymbol \tau)\in E_n\\R(\boldsymbol \tau)\cap F\neq \emptyset}} \lambda_m(R) \times \frac{\mu(\hat{R}(\boldsymbol \tau))}{c_1\cdot c_2 \cdot \lambda_m(\hat{R}(\boldsymbol \tau))} \overset{\text{Lemma}\,\ref{lb:10}}\leq \frac{C_1}{c_1\cdot c_2} \sum\limits_{\substack{R(\boldsymbol \tau)\in E_n\\R(\boldsymbol \tau)\cap F\neq \emptyset}}\lambda_m(R)\cdot \frac{Q^{-m-1+\epsilon}}{Q^{-\sum_{i=1}^m(1+ \tau_i)}}.$$
  We now estimate the summation of the Lebesgue measure. Let $\mathcal{V} = \{R\,:\,R(\boldsymbol \tau)\in E_n,\,R(\boldsymbol \tau)\cap F\neq \emptyset\}$. We observe that the rectangles in $\mathcal{V}$ are disjoint and contained in $\hat{R}(\boldsymbol \tau)$. Let $\nu\in \{1,\cdots,m\}$ be the largest integer such that $1+\tau_{\nu} \geq A$. Then we claim that for any $R\in \V$, 
    $$R\subseteq \prod\limits_{i=1}^{\nu} \left[\frac{p_i}{Q}-Q^{-1-\tau_i}, \frac{p_i}{Q}+Q^{-1-\tau_i}\right] \times \prod\limits_{i=\nu+1}^m [x_i-5r,x_i+5r].$$

Let $R$ be an arbitrary rectangle in $\mathcal{V}$, and $\y=(y_1,\cdots,y_m)\in R$. As $R\subseteq \hat{R}(\boldsymbol \tau)$, then $y_i\in \left[\frac{p_i}{Q}-Q^{-1-\tau_i}, \frac{p_i}{Q}+Q^{-1-\tau_i}\right]$ for any $i\in \{1,\cdots,m\}$. On the other hand, since $R(\boldsymbol \tau)\cap F\neq \emptyset$, $R$ has half the side lengths equal to $d(R)^{-1-\boldsymbol \tau})$, and $R$ has half the side lengths equal to $3d(R)^{-\boldsymbol \rho}$, then an application of triangle inequality gives that for $i\geq \nu+1$,  
$$|y_i-x_i| \leq r+d(R)^{-1-\tau_i}+3d(R)^{-\rho_i}\overset{(\ref{eqb:5})}\leq 5r.$$
This finishes the proof of the claim.

By the disjointness of the rectangles in $\mathcal{V}$, we find
\begin{align*}
    \mu(F) &\leq \frac{C_1}{c_1\cdot c_2} \frac{Q^{-m-1+\epsilon}}{Q^{-\sum_{i=1}^m (1+\tau_i)}} \cdot \lambda_m\left(\prod\limits_{i=1}^{\nu} \left[\frac{p_i}{Q}-Q^{-1-\tau_i}, \frac{p_i}{Q}+Q^{-1-\tau_i}\right] \times \prod\limits_{i=\nu+1}^m [x_i-5r,x_i+5r]\right)\\
    &= C_2\cdot r^{\sum_{i=\nu+1}^m 1}\,\cdot\,Q^{-m-1 + \epsilon + \sum_{i=\nu+1}^m (1+\tau_i)}= C_2\cdot r^{\frac{m+1+\sum_{i=\nu+1}^m (A-1-\tau_i)}{A} - \frac{\epsilon}{A}}\\
    &\leq C_2\cdot r^{\frac{m+1+\sum_{i=\nu+1}^m (A-1-\tau_i)}{A} - \epsilon}
\end{align*}
Here $C_2$ is a constant independent of $F$. We maximize the last term by minimizing the exponent over $A$. Since the exponent is linear in $\frac{1}{A}$ for fixed $\nu$, the minimum of the exponent must take place at the endpoints of the admissible intervals for $A$, i.e., at $1+\tau_i$ for $i\in \{1,\cdots,m\}$. In this case, the exponent is precisely
    $$\frac{m+1+\sum_{k=i}^m(\tau_i-\tau_k)}{1+\tau_i}$$
Therefore
\begin{equation}\label{eqb:10}
	\mu(F) \leq C_2 r^{\min\limits_{1\leq i \leq k} \left\{\frac{m+1+\sum_{k=i}^m (\tau_i-\tau_k)}{1+\tau_i}\right\} - \epsilon}\leq  C_2 r^{s_{\boldsymbol \rho}-2\epsilon}
\end{equation}
	  
\item \textbf{Case 3:} $r<Q^{-1-\tau_1}$.  Let $\V:=\{R\,:\,R(\boldsymbol \tau)\in E_n,\,R(\boldsymbol \tau )\cap F \neq \emptyset\}$ and $Q' := \min \{d(R)\,:\,R \in \V\}$. Let $\tilde{t}\in \Z$ be the smallest positive integer such that $(2^{\tilde{t}}Q')^{-\rho_m} \leq r$; if such positive integer $\tilde{t}$ does not exist, then set $\tilde{t}=-1$. By Lemma \ref{lb:9}, $(Q')^{-\rho_1}\leq r$. Using elementary algebraic manipulation, one can show $\tilde{t}\leq -C_3\log r$ for some constant $C_3$ independent of $F$.  We define a sequence of subcollections of $\V$. We let 
	\begin{align*}
	\V_0&:=\{R\in \V\,:\, d(R)^{-\rho_m}\leq r\}
	\end{align*}
and if $\tilde{t}\geq 1$, we set
	\begin{align*}
	\V_t&:=\{R\in \V\,:\, 2^{t-1}Q' \leq d(R)\leq 2^t Q' \}, \quad \text{for}\,\,t=1,2,\cdots,\tilde{t}-1\\
	\V_{\tilde{t}}&:=\{R\in \V\,:\,2^{\tilde{t}-1} Q' \leq d(R) < r^{-\frac{1}{\rho_m}}\}.
	\end{align*}
It is clear that the union of these $\V_t$'s is $\V$. Since $\mu$ is supported in $\D_\epsilon(B)$, then
	\begin{equation}\label{eqb:9}
	\mu(F) \leq \sum\limits_{R\in \V} \mu(F\cap R) \leq \sum\limits_{R\in \V_0} \mu(R(\boldsymbol \tau)) + \sum\limits_{t=1}^{\tilde{t}} \sum\limits_{R\in \V_t} \mu(F\cap R(\boldsymbol \tau)).
	\end{equation}
We bound the summands on the right of (\ref{eqb:9}).

Firstly, we bound $\sum\limits_{R\in \V_0} \mu(F\cap R)$. By the triangle inequality, it is easy to show that if $R\in \V_0$, then $R\subseteq B(\x,8r)$. Consequently,
\begin{align*}
	 \sum\limits_{R\in \V_0} \mu(R(\boldsymbol \tau))= \sum \limits_{R\in \mathcal{V}_0} \frac{\lambda_m(R)}{\sum\limits_{R'\in T_{G_n,\hat{\cR}}} \lambda_m(R)} \times \mu (\hat{R}(\boldsymbol \tau))&=\frac{\mu(\hat{R}(\boldsymbol \tau))}{\sum\limits_{R'\in T_{G_n,\hat{\cR}}} \lambda_m(R')} \cdot \sum \limits_{R\in \mathcal{V}_0} \lambda_m(R)\\
	 &\leq \frac{C_1}{c_1\cdot c_2} \cdot \frac{Q^{-m-1+\epsilon}}{Q^{-\sum_{i=1}^m (1+\tau_i)}} \cdot \lambda_m(B(\x,8r))\\
    &= C_4 Q^{\sum_{i=1}^m \tau_i-1+\epsilon} r^m
	\end{align*}
Where $C_4$ is a constant independent of $F$. By assumption, $r<Q^{-1-\tau_1}$, so
	\begin{equation}\label{eqb:11}
	\sum\limits_{R\in \V_0} \mu(R(\boldsymbol \tau))\leq C_4r^{\frac{m+1+\sum_{i=1}^m(\tau_1-\tau_i)-\epsilon}{1+\tau_1}} \leq C_4r^{s_{\boldsymbol \rho}-2\epsilon}.
	\end{equation}
 
For $t\in \{1,\cdots,\tilde{t}\}$, define $A_t := -\frac{\log r}{\log (2^tQ')}$, so $(2^tQ')^{-A_t} = r$ and $A_t\in [\rho_m,\rho_1]$ (by Lemma \ref{lb:9}). Let $\nu_1\in \{1,\cdots,m\}$ be the integer such that $A_t\leq \rho_{\nu_1}$ but $A_t > \rho_{\nu_1+1}$, if no such integer exists, set $\nu_1=0$. Let $\nu_2\in \{1,\cdots,m\}$ be the integer such that $A_t\geq 1+\tau_{\nu_2}$ but $A_t< 1+\tau_{\nu_2-1}$, if no such integer exists, set $\nu_2=m+1$. Since $1+\tau_i-\rho_i>0$ for $i=1,\cdots,m$, then $\nu_1 <\nu_2$.

Using triangle inequality, one may show that if $R\in \V_t$, then
    $$R\subseteq \prod\limits_{i=1}^{\nu_1} [x_i-5r,x_i+5r]\times \prod\limits_{i=\nu_1+1}^m[x_i-5(2^{t-1}Q')^{-\rho_i}, x_i+5(2^{t-1}Q')^{-\rho_i}]$$
As the rectangles in $\V_t$ are pair wise disjoint and have comparable volume $\asymp$ $(2tQ')^{-\sum_{i=1}^m \rho_i}$, a standard volume argument gives
    \begin{equation}\label{eqb:12}
        \#\V_t \leq C_5 \prod\limits_{i=1}^{\nu_1} r(2^{t-1}Q')^{\rho_i}= C_5 r^{\sum_{i=1}^{\nu_1} 1} (2^{t-1}Q')^{\sum_{i=1}^{\nu_1} \rho_i}
    \end{equation}

For each $R\in \V_t$, we bound the measure $\mu(F\cap R(\boldsymbol \tau))$ using rectangles in level $E_{n+1}$. Let $\V_{R}:=\{\tilde{R}\in E_{n+1}\,:\,\tilde{R}\subseteq R(\boldsymbol \tau),\,\tilde{R}\cap F\neq \emptyset\}$. Then
	\begin{equation}\label{eqb:13}
	\begin{split}
	\mu(F\cap R(\boldsymbol \tau))\leq \sum\limits_{\tilde{R}\in \V_R} \mu(\tilde{R})&\leq \frac{C_1}{c_1c_2} d(R)^{-m-1+\sum_{i=1}^m (1+\tau_i)+\epsilon} \sum\limits_{\tilde{R}\in \V_R} \lambda_m(\tilde{R}).\\
	&\leq \frac{C_1}{c_1c_2} (2^tQ')^{-m-1+\sum_{i=1}^m (1+\tau_i)+\epsilon}\sum \limits_{\tilde{R}\in \V_{R}}\lambda_m(\tilde{R})
	\end{split}
	\end{equation}
By condition  (\ref{eqb:5}) and Lemma \ref{lb:9}, $d(R')^{-\rho_m} \leq d(R)^{-1-\tau_1}<d(R)^{-\rho_1}\leq r$, which by triangle inequality, shows that $\bigcup \V_R\subseteq 5F\cap R(\boldsymbol \tau)$. Therefore, by the disjointness of the rectangles in level $E_{n+1}$, there exists a constant $C_6$ independent of $F$ such that
    \begin{align} 
    \sum \limits_{\tilde{R}\in \V_{R}}\lambda_m(\tilde{R}) \leq \prod\limits_{i=1}^m \min\{r, (2^{t-1}Q')^{-1-\tau_i}\}\leq C_6 \prod\limits_{i=1}^{\nu_2-1} (2^tQ')^{-1-\tau_i} \cdot \prod\limits_{i=\nu_2}^m r
    \end{align}
 Together with (\ref{eqb:12}) and (\ref{eqb:13}), we conclude that there exists a constant $C_7$ independent of $F$, such that 
	\begin{align*}
	\sum \limits_{R\in \V_t} \mu(F\cap R(\boldsymbol \tau)) &\leq C_7 (2^tQ')^{-m-1+\sum_{i=1}^m (1+\tau_i)+\epsilon} \prod\limits_{i=1}^{\nu_1} r(2^{t}Q')^{\rho_i}\cdot \prod\limits_{i=1}^{\nu_2-1} (2^tQ')^{-1-\tau_i} \cdot \prod\limits_{i=\nu_2}^m r\\
    &=C_7 r^{\sum_{i=1}^{\nu_1} 1 + \sum_{i=\nu_2}^m 1}\cdot (2^tQ')^{-\sum_{i=\nu_1+1}^m \rho_i + \sum_{i=\nu_2}^m (1+\tau_i)} \cdot (2^tQ')^{\epsilon} 
	\end{align*}
Since $(2^tQ')^{-A_t}=r$, $A_t\geq 1$ then
    \begin{align*}
    \sum \limits_{R\in \V_t} \mu(F\cap R(\boldsymbol \tau)) &\leq C_7 r^{\sum_{i=1}^{\nu_1} 1 + \sum_{i=\nu_2}^m 1} \cdot (r)^{\sum_{i=\nu_1+1}^m \frac{\rho_i}{A_t} - \sum_{i=\nu_2}^m \frac{1+\tau_i}{A_t}}\cdot r^{-\epsilon}\\
    &\leq C_7 r^{\sum_{i=1}^{\nu_1} 1 + \sum_{i=\nu_2}^m \frac{A_t-(1+\tau_i)+\rho_i}{A_t} + \sum_{i=\nu_1+1}^{\nu_2-1} \frac{\rho_i}{A_t}}\\
    &= C_7 r^{\sum_{i=1}^{\nu_1} 1 + \sum_{i=\nu_2}^m 1 - \sum_{i=\nu_2}^m \frac{1+\tau_i-\rho_i}{A_t} + \sum_{i=\nu_1+1}^{\nu_2-1} \frac{\rho_i}{A_t}}
    \end{align*}
We maximize last expression by minimizing the exponent of $r$. Since the exponent is linear in $A_t$ once $\nu_1$ and $\nu_2$ is fixed. Then the minimum should take place at the endpoints of the admissible interval for $1/A_t$, which is precisely at $1+\tau_i$ or $\rho_i$ for some $i\in \{1,\cdots,m\}$. In this case, by comparing to the expression (\ref{eqb:16}), we see that 
    $$r^{\sum_{i=1}^{\nu_1} 1 + \sum_{i=\nu_2}^m 1 - \sum_{i=\nu_2}^m \frac{1+\tau_i-\rho_i}{A_t} + \sum_{i=\nu_1+1}^{\nu_2-1} \frac{\rho_i}{A_t}}\leq r^{s_{\boldsymbol \rho}}.$$
Returning to (\ref{eqb:9}), we find
	\begin{align*}
	\mu(F) &\leq \sum\limits_{R\in \V_0} \mu(R(\boldsymbol \tau)) + \sum\limits_{t=1}^{\tilde{t}} \sum\limits_{R\in \V_t} \mu(F\cap R(\boldsymbol \tau))\\
	&\leq C_4r^{s_{\boldsymbol \rho}-2\epsilon} + \sum\limits_{t=1}^{\tilde{t}} C_7r^{s_{\boldsymbol \rho}-\epsilon}\\
	&\leq C_4r^{s_{\boldsymbol \rho}-2\epsilon}+ (-C_3\log r) \cdot C_7r^{s_{\boldsymbol \rho}-\epsilon}.
	\end{align*}
There exists $r_o$ such that whenever $r<r_o$, $-\log r \leq r^{-\epsilon}$. Therefore, there exists a constant $C_8$ independent of $F$, such that 
	\begin{equation}\label{eqb:14}
	\mu(F) \leq C_8r^{s_{\boldsymbol \rho}-2\epsilon}
	\end{equation}
\end{itemize}

\vspace{0.3cm}
Finally, from (\ref{eqb:8}), (\ref{eqb:10}) and (\ref{eqb:14}), we may conclude that there exists a constant $C$ and $r_o$, such that whenever, $F=B(\x,r)\subseteq [0,1]^m$, $r<r_o$, then
	$$\mu(F) \leq Cr^{s_{\boldsymbol \rho}-2\epsilon}.$$
This finishes the proof of Theorem \ref{tb:3} and by the mass distribution principle \ref{pb:1}, Theorem \ref{tb:2} follows accordingly.
\end{proof}

\vspace{1cm}
\bibliographystyle{amsplain}
\bibliography{references} 

\end{document}